\renewcommand{\phi}{\varphi}
\newcommand{\QQ}{\mathbf{Q}}
\newcommand{\ZZ}{\mathbf{Z}}
\newcommand{\RR}{\mathbf{R}}
\newcommand{\CC}{\mathbf{C}}
\newcommand{\PP}{\mathbf{P}}
\newcommand{\ds}{\displaystyle}
\newcommand{\funcdef}[5]{\begin{array}{rcl} #1 : #2 & \rightarrow & #3 \\ #4 & \mapsto & #5 \end{array} }
\newcommand{\ol}[1]{\overline{#1}}
\newcommand{\mf}[1]{\mathfrak{#1}}
\newcommand{\mc}[1]{\mathcal{#1}}
\newcommand{\wt}[1]{\widetilde{#1}}
\newcommand{\gl}{\mathrm{GL}}
\newcommand{\GO}{\mathrm{GO}}
\newcommand{\SO}{\mathrm{SO}}
\renewcommand{\mod}[1]{\text{\! }(\operatorname{mod}\text{\! }#1)}
\renewcommand{\varepsilon}{\epsilon}
\DeclareMathOperator{\sgn}{sgn}
\DeclareMathOperator{\tr}{tr}
\DeclareMathOperator{\Tr}{Tr}
\DeclareMathOperator{\SL}{SL}
\DeclareMathOperator{\Vol}{Vol}
\DeclareMathOperator{\sh}{sh}
\DeclareMathOperator{\ord}{ord}
\newcommand{\VQ}{V^{(Q)}}
\newcommand{\vQ}{v^{(Q)}}
\newcommand{\xQ}{x^{(Q)}}
\newcommand{\yQ}{y^{(Q)}}
\newcommand{\zQ}{z^{(Q)}}
\newcommand{\FQ}{\mc{F}^{(Q)}}
\newcommand{\Or}{{\mathrm{Or}}}
\renewcommand{\Re}{\mathrm{Re}}
\renewcommand{\Im}{\mathrm{Im}}
\newcommand{\vect}[1]{\begin{pmatrix}#1\end{pmatrix}}
\newtheorem{theorem}{Theorem}[section]
\newtheorem{theoremA}{Theorem}
\newtheorem{proposition}[theorem]{Proposition}
\newtheorem{lemma}[theorem]{Lemma}
\newtheorem{corollary}[theorem]{Corollary}
\numberwithin{equation}{section}
\newcommand{\TZP}{T_K^{\perp\prime}}
\newcommand{\TO}{T_\mc{O}^{\perp}}
\newcommand{\MO}{M_\mc{O}^{\perp}}
\newcommand{\A}{\mc{A}}
\newcommand{\im}{\mathrm{im}}
\newcommand{\ratio}{\mathrm{ratio}}
\newcommand{\Rcom}[1]{}
\begin{document}

\title[Equidistribution of shapes of complex cubic fields]{E\MakeLowercase{quidistribution of shapes of complex cubic fields of fixed quadratic resolvent}}
\subjclass[2010]{11R16, 11R45, 11E12}
\keywords{Cubic fields, lattices, equidistribution, geodesics, majorant space}

\author{R\MakeLowercase{obert} H\MakeLowercase{arron}}
\address{
Department of Mathematics\\
Keller Hall\\
University of Hawai`i at M\={a}noa\\
Honolulu, HI 96822\\
USA
}
\email{rharron@math.hawaii.edu}
\thanks{The author is partially supported by a Simons Collaboration Grant.}
\date{\today}

\begin{abstract}
We show that the shape of a complex cubic field lies on the geodesic of the modular surface defined by the field's trace-zero form. We also prove a general such statement for all orders in \'{e}tale $\QQ$-algebras. Applying a method of Manjul Bhargava and Piper~H to results of Bhargava and Ariel Shnidman, we prove that the shapes lying on a fixed geodesic become equidistributed with respect to the hyperbolic measure as the discriminant of the complex cubic field goes to infinity. We also show that the shape of a complex cubic field is a complete invariant (within the family of all cubic fields).
\end{abstract}

\maketitle

 
\tableofcontents
 
\section{Introduction}

David Terr, in his PhD thesis \cite{terr}, introduced the notion of the \textit{shape} of a number field $K$. This is a certain lattice (of rank $[K:\QQ]-1$) attached to $K$ considered up to rotation, reflection, and scaling. In \cite{terr}, he proves that the shapes of cubic fields are equidistributed in the space of shapes of rank $2$ lattices (as the discriminant of the cubic field goes to infinity). Manjul Bhargava and Piper~H (\cite{Manjul-Piper,PiperThesis}) generalize this result to show the equidistribution of shapes of $S_n$-number fields of degree $n$ for $n=4$ and $5$. These authors in fact conjecture that such equidistribution holds for all $n$ reflecting the idea that a degree $n$ number field with Galois group $S_n$ is ``random''. The primary goal of this article is to investigate the distribution of more specific (read ``less random'') families of number fields. In \cite{terr}, Terr proves that $C_3$-cubic fields all have the same shape (hexagonal!), thus showing that restricting the number fields can impose strong constraints on the shapes. In \cite{Manjul-Ari}, the shapes of real cubic fields with fixed quadratic resolvent field are shown to lie in a finite set and be (essentially) equidistributed in it. We consider the shapes of complex cubic fields of fixed quadratic resolvent and show that they are equidistributed on certain geodesics on the modular surface (see Fig.~\ref{fig:equid5} for an example). Combined with \cite{PureCubicShapes}, our main result suggests an intriguing explanation of a result of Cohen--Morra \cite[Theorem~1.1]{Cohen-Morra} and \cite[Theorem~6]{Manjul-Ari} that the number of cubic fields with quadratic resolvent $\QQ(\sqrt{-3})$ has bigger growth rate than that of fields with other quadratic resolvents by a log factor. We prove some other interesting results along the way, including that the shape of a complex cubic determines that field within the family of all cubic fields, and that the shapes of degree $n$ number fields with given trace-zero form lie on the majorant space of the trace-zero form.

\begin{figure}[h]
	\includegraphics[scale=0.35]{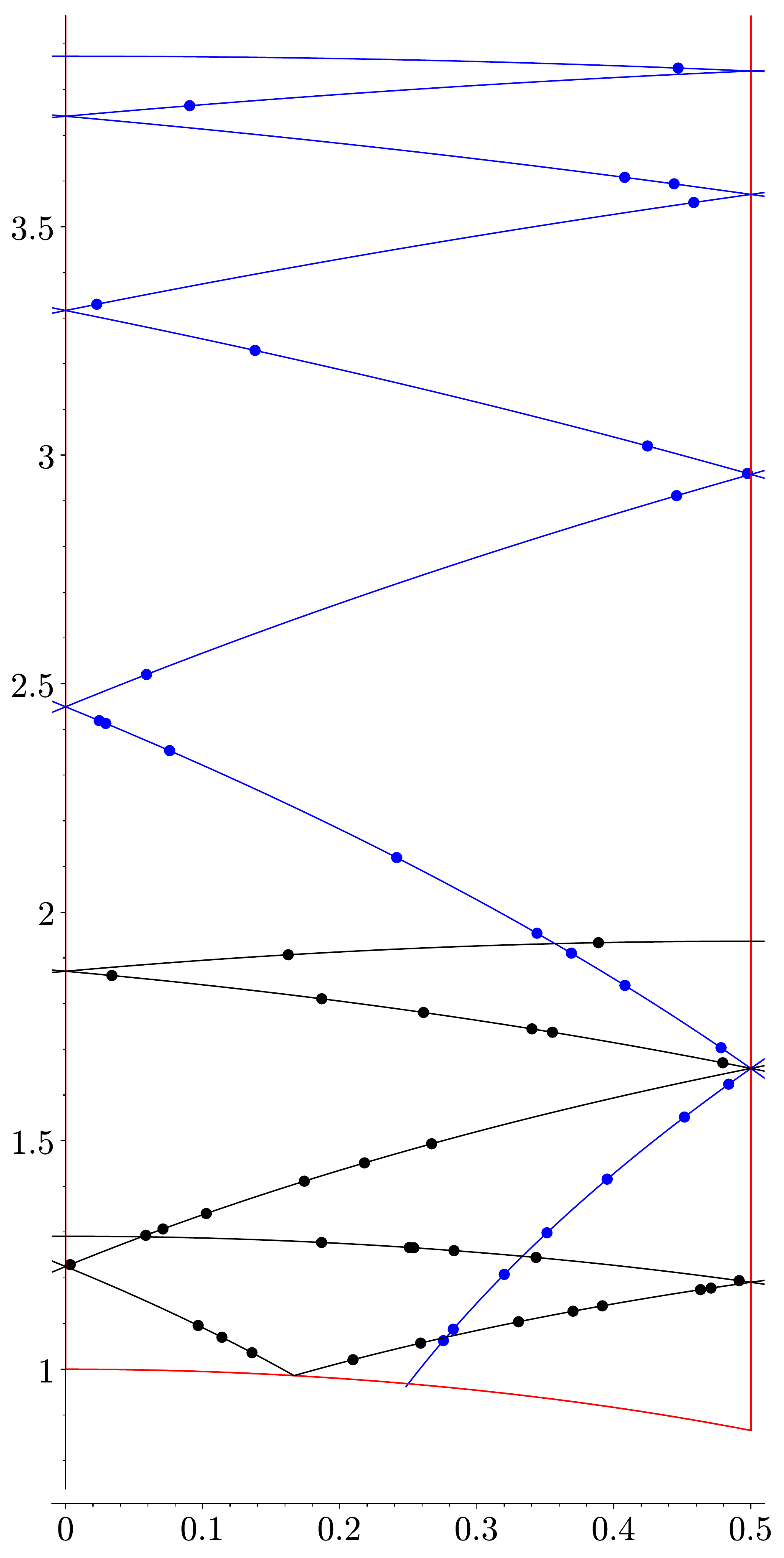}
	\caption{Shapes of all complex cubic fields $K$ with quadratic resolvent $\QQ(\sqrt{-20})$ and $|\Delta(K)|\leq3,375,000$. The blue geodesic (i.e.\ the one that goes higher) corresponds to the class of $-x^2+6xy+6y^2$, whereas the black geodesic corresponds to that of $-2x^2+6xy+3y^2$. These represent the only two $\gl_2(\ZZ)$-classes of indefinite integral binary quadratic forms of discriminant ${60=(-3)\cdot(-20)}$.}
	\label{fig:equid5}
\end{figure}

We now describe in more detail our results and how they fit in to our current understanding.

\subsection{The shape as an invariant}
A fundamental use of attaching invariants to number fields (or to any objects in mathematics) is to help distinguish between them. The degree and the discriminant are the first one typically encounters and together they completely distinguish a quadratic number field from other number fields. Sadly, in every degree greater than 2, there are non-isomorphic fields with the same discriminant. A natural refinement of the discriminant of a degree $n$ number field $K$, with ring of integers $\mc{O}_K$, is the isometry class of the \textit{trace pairing}
\[\funcdef{T_K}{\mc{O}_K\times\mc{O}_K}{\ZZ}{(a,b)}{\Tr_{K/\QQ}(ab)},
\]
where $\Tr_{K/\QQ}:K\rightarrow\QQ$ denotes the usual field trace. Indeed, the discriminant is the determinant of this pairing (i.e.\ the determinant of a Gram matrix representing this bilinear form). Despite some goods news in the case of totally real fields (see e.g. \cite{Casimir}), the extra information of the trace form is no help for complex cubic fields: two complex cubic fields have isometric trace forms if and only if they have the same discriminant (\cite[Theorem 3.3]{M-S})

The \textit{shape} of a number field stems from a similar refinement of the discriminant using Minkowski's geometry of numbers. One obtains a pairing
\[
 M_K:\mc{O}_K\times\mc{O}_K\rightarrow\ZZ
\]
by first embedding $K$ into $\CC^n$ and taking the standard Hermitian inner product on $\CC^n$. Unlike $T_K$ whose signature depends on the number of real and complex places of $K$ (\cite{Taussky}), $M_K$ is always positive-definite. In this article, we prove results that seem to indicate this feature allows it to retain more information about the number field than the trace form does. More specifically, we do not consider $M_K$, but rather its projection $M_K^\perp$ to the trace zero space, up to homotheties. In other words, we take the lattice corresponding to $M_K$, project it onto the space of elements of $K$ of trace zero, and consider its equivalence class under scaling, rotations, and reflections. This is what is called the \textit{shape} of $K$, denoted $\sh(K)$. In \S\ref{sec:complete_invariant}, we prove that the shape is a complete invariant in the family of complex cubic fields.

\begin{theoremA}\label{thm:complete_invariant}
	If $K$ is a complex cubic field and $L$ is any other cubic field, then $\sh(K)\neq\sh(L)$. In particular, the shape is a complete invariant in the family of complex cubic fields.
\end{theoremA}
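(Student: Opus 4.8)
The plan is to compute $\sh(K)$ precisely enough to read the field off of it. The input is the identity, valid on the trace-zero subspace of a complex cubic field $K$ with real embedding $\sigma_1$ and complex embedding $\sigma_2$ (so that $\sigma_1(\mu)=-2\,\Re\sigma_2(\mu)$ there):
\[
  M_K^\perp(\mu)\;=\;\sigma_1(\mu)^2+2\,|\sigma_2(\mu)|^2\;=\;3\,\sigma_1(\mu)^2-\Tr_{K/\QQ}(\mu^2).
\]
Thus $\sh(K)$ is the homothety class of the projected trace-zero lattice $\Lambda$ of $\mc{O}_K$ carrying the positive-definite form $q_K:=3\sigma_1^2-T_K^\perp$, where $T_K^\perp$ is the rational-valued indefinite trace-zero form; this already yields the ``shape lies on a geodesic'' statement, but I want to exploit the exact form $q_K$, which differs from a rational form by the rank-one (irrational) form $3\sigma_1^2$.

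First I would dispose of the totally real fields. If $L$ is a totally real cubic then all embeddings are real, so $M_L^\perp=T_L^\perp$ on the trace-zero lattice, whence $\sh(L)$ is the class of an \emph{integral} positive-definite binary quadratic form --- a CM point of the modular surface. It therefore suffices to check that $q_K$ is never proportional to a rational form. Taking the $\QQ$-basis $\bigl\{\theta,\ \theta^2+\tfrac{2p}{3}\bigr\}$ of $\Lambda\otimes\QQ$ attached to a trace-zero generator $\theta$ of $\mc{O}_K$ with $\theta^3+p\theta+q=0$ (so $q\neq0$), the coefficients of $q_K$ are rational shifts of $3\sigma_1(\theta)^2$, $6\,\sigma_1(\theta)\,\sigma_1\!\bigl(\theta^2+\tfrac{2p}3\bigr)$, and $3\,\sigma_1\!\bigl(\theta^2+\tfrac{2p}3\bigr)^2$, so proportionality to a rational form is equivalent to the $\QQ$-linear dependence of $\bigl\{1,\ \sigma_1(\theta)^2,\ \sigma_1(\theta^2+\tfrac{2p}3)^2\bigr\}$; but modulo $\mathrm{span}_\QQ\{1,\theta^2\}$ the last element equals $-q\,\theta$, and $\{1,\theta,\theta^2\}$ is a $\QQ$-basis of $K$, so no such dependence exists. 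Hence $\sh(K)\neq\sh(L)$ for every totally real cubic $L$.

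For two complex cubic fields $K,K'$ with $\sh(K)=\sh(K')$ there is an isomorphism $\phi$ of their projected trace-zero lattices and a real $\lambda>0$ with $q_K=\lambda\,\phi^*q_{K'}$; comparing Gram determinants (which are rational, being $\tfrac13|\Delta(K)|$ and $\tfrac13|\Delta(K')|$) gives $\lambda^2=|\Delta(K)|/|\Delta(K')|\in\QQ$. Setting $m:=\sigma_1'\circ\phi$ and substituting the identity above gives, for all $x\in\Lambda$,
\[
  \sigma_1(x)^2-\lambda\,m(x)^2\;=\;\tfrac13\bigl(T_K^\perp(x)-\lambda\,\phi^*T_{K'}^\perp(x)\bigr)\ \in\ \QQ+\QQ\lambda .
\]
Assume $K\not\cong K'$. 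A complex cubic has a unique real place, so $\sigma_1(K)$ and $\sigma_1'(K')$ are \emph{distinct} cubic subfields of $\RR$; and the compositum of two non-isomorphic cubic fields has degree $9$ (the minimal polynomial of a generator of one cannot acquire a root in the other), hence no quadratic subfield. Since $m$ is injective on $\Lambda$ with $m(x)\notin\QQ$ for $x\neq0$, the displayed relation solves for $\lambda$ as a quotient of a nonzero element of $\sigma_1(K)$ by a nonzero element of $\sigma_1'(K')$, placing $\lambda$ in that degree-$9$ field; with $\lambda^2\in\QQ$ this forces $\lambda\in\QQ$. But then $\lambda\,m(x)^2=\sigma_1(x)^2-(\text{rational})\in\sigma_1(K)$, so $m(x)^2\in\sigma_1(K)\cap\sigma_1'(K')=\QQ$, forcing $m(x)\in\QQ$ and hence $\phi(x)=0$ for all $x$ --- impossible. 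So $K\cong K'$, as needed.

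I expect the main obstacle to be the control of the scalar $\lambda$: a priori two distinct complex cubic fields could have proportional shapes with an \emph{irrational} proportionality constant, and the argument only closes because the compositum of two non-isomorphic cubic fields is a degree-$9$ field, which pins $\lambda$ down to $\QQ$ and thereby rationalizes the trace-form part of the identity so that the cubic irrationality of $\sigma_1$ can be used. The remaining bookkeeping --- matching the two projected trace-zero lattices, tracking denominators, and confirming the Gram-determinant computation --- is routine.
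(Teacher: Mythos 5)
Your proof is correct, but it takes a genuinely different route from the paper. The paper proves the theorem via the Levi--Delone--Fadeev parametrization: it computes the Gram matrix of $M_{R_F}^\perp$ explicitly in terms of the cubic form $F$ and a root $\theta$, observes that $\Re(\sh(K))$ lies in $\sigma(K)$, and shows by an explicit linear-algebra (row-reduction/inconsistency) argument that $\Re(\sh(K))$ is irrational for non-pure complex cubics; real cubics are excluded because their shape coordinates are rational or quadratic, and the pure cubic case is imported from the author's earlier paper, where $K=\QQ(\Im(\sh(K)))$. The payoff of that route is the explicit statement that $K$ is literally $\QQ$ adjoin a coordinate of its shape, which is reused later to prove that shapes avoid the boundary. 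You instead work basis-freely from the identity $M_K^\perp=3\sigma_1^2-T_K^\perp$ on the trace-zero space: real cubics are eliminated by showing $q_K$ is never proportional to a rational form (your three-term $\QQ$-independence computation, which I checked: $(\theta^2+\tfrac{2p}{3})^2\equiv -q\theta$ modulo $\mathrm{span}_\QQ\{1,\theta^2\}$, and $q\neq0$), and two complex cubics are compared by pinning down the similarity factor $\lambda$: $\lambda^2=|\Delta(K)|/|\Delta(K')|\in\QQ$ from the Gram determinants (indeed $\tfrac13|\Delta|$), $\lambda$ lies in the compositum $\sigma_1(K)\sigma_1'(K')$, which has degree $9$ when $K\not\cong K'$ and hence no quadratic subfield, so $\lambda\in\QQ$, after which $m(x)^2\in\sigma_1(K)\cap\sigma_1'(K')=\QQ$ forces the contradiction. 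This buys a uniform, self-contained argument (pure cubics are covered without citing the earlier paper, and no explicit matrix computation is needed), at the cost of not producing the paper's sharper conclusion $K=\QQ(\Re(\sh(K)))$, which the paper also needs for Theorem F. Two cosmetic points: your ``equivalent to $\QQ$-linear dependence'' is really only the implication you use (proportionality implies dependence, via the nonvanishing diagonal entries), and the basis element should be a trace-zero generator of $K$ rather than of $\mc{O}_K$ (a $\QQ$-basis of the trace-zero space is all you need); neither affects the argument.
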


Since the shape of $K$ is a rank 2 lattice up to homothety, it corresponds to a point in the upper-half plane. We in fact show that the field $K$ is obtained by adjoining to $\QQ$ the $x$-coordinate of its shape (or, when $K$ is a pure cubic\footnote{Recall that a \textit{pure} cubic field is one of the form $\QQ(\sqrt[3]{m})$ for some non-cube $m\in\QQ$.}, its $y$-coordinate). This is then reminiscent of how a quadratic field is obtained by adjoining to $\QQ$ the square root of its discriminant.

\subsection{The distribution of complex cubic shapes}

Once we have an invariant, we can ask how ``random'' it is. A general philosophy is that it should be as random as it can be! And this is what we show.

What constraints are there on the shape of a complex cubic field $K$? The following result gives an elegant answer involving the association of a geodesic on $\gl_2(\ZZ)\backslash\mf{H}$ to an indefinite binary quadratic form over $\ZZ$ (see \S\ref{sec:geodesics} for some background about these geodesics and the beginning of \S\ref{sec:equidistribution} for a proof of this theorem).

\begin{theoremA}\label{thm:3TKonMK}
	The shape of the complex cubic field $K$ lies on the geodesic associated to the indefinite binary quadratic form $T_K^\perp$. 
\end{theoremA}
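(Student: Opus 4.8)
The plan is to relate two quadratic forms on the trace-zero space $V_0 \subset K$ of dimension $2$: the (projected) trace form $T_K^\perp$, which is indefinite of signature $(1,1)$ since $K$ is complex cubic, and the (projected) Minkowski form $M_K^\perp$, which is positive-definite and whose homothety class is $\sh(K)$. The point associated to $\sh(K)$ in $\mathfrak{H}$ is, up to $\gl_2(\ZZ)$, the point $z$ such that the Gram matrix of $M_K^\perp$ in some $\ZZ$-basis of $(\mc{O}_K \cap V_0)$ (or of the appropriate projected lattice) is proportional to $\left(\begin{smallmatrix} 1 & \Re z \\ \Re z & |z|^2 \end{smallmatrix}\right)/\Im z$. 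Meanwhile, the geodesic on $\gl_2(\ZZ)\backslash\mathfrak{H}$ associated to an indefinite form $ax^2+bxy+cy^2$ is the semicircle (or vertical line) $\{z : a|z|^2 + b\,\Re z + c = 0\}$, i.e.\ $a(x^2+y^2)+bx+c=0$. So the theorem amounts to the identity: \emph{the point of $M_K^\perp$ lies on the geodesic of $T_K^\perp$}, which I would reduce to a pointwise algebraic relation between the two forms.

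First I would set up explicit coordinates: choose an embedding $K \hookrightarrow \CC$ with conjugates $\alpha, \bar\alpha$ (one real place, one complex place), so that for $v \in V_0$ we have, writing the three archimedean coordinates as $(v^{(1)}, v^{(2)}, \overline{v^{(2)}})$ with $v^{(1)} \in \RR$, $v^{(2)} \in \CC$, the constraint $v^{(1)} + 2\Re v^{(2)} = 0$ from trace zero. Then $T_K^\perp(v) = (v^{(1)})^2 + 2|v^{(2)}|^2$ wait — more precisely $T_K(v,v) = (v^{(1)})^2 + 2\Re((v^{(2)})^2)$, while $M_K(v,v) = (v^{(1)})^2 + 2|v^{(2)}|^2$. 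The key elementary observation is then that on $V_0$, using $v^{(1)} = -2\Re v^{(2)}$, both forms become explicit quadratic forms in the real and imaginary parts of $v^{(2)}$, and one computes directly that $M_K^\perp$ and $T_K^\perp$ are diagonalized in a common basis (the real and imaginary directions of $v^{(2)}$), with $M_K^\perp - T_K^\perp$ being a positive multiple of the form $(\Im v^{(2)})^2$ — indeed $|v^{(2)}|^2 - \Re((v^{(2)})^2) = 2(\Im v^{(2)})^2$. From $M_K^\perp = T_K^\perp + 2(\text{imaginary part})^2$ one reads off that, as binary quadratic forms in a rational basis of the lattice $L = \mc{O}_K \cap V_0$ (or of its image under $M_K^\perp$), $M_K^\perp$ and $T_K^\perp$ have the same "cross term structure" forcing the shape point to satisfy the geodesic equation of $T_K^\perp$.

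Concretely, the cleanest route: the geodesic of the indefinite form $q = T_K^\perp$ (thought of via its Gram matrix $S$ in a basis of $L$) consists of those positive-definite forms $P$ in the $\gl_2(\RR)$-orbit of the standard form that are "aligned" with $q$ in the sense that $P$ and $q$ are simultaneously diagonalizable with $q$ having signature $(1,1)$ — equivalently the point $z_P$ satisfies the semicircle equation attached to $S$. Since $M_K^\perp = T_K^\perp + 2\ell^2$ where $\ell$ is the linear functional $v \mapsto \Im v^{(2)}$ on $V_0$, and $\ell^2$ is rank one, I would show $M_K^\perp$ is obtained from $T_K^\perp$ by adding a rank-one positive semidefinite form "in the null direction," which is exactly the condition characterizing the points on the geodesic of $T_K^\perp$. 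I would then verify this matches the convention for the geodesic of $T_K^\perp$ as set up in \S\ref{sec:geodesics}, taking care of the $\gl_2(\ZZ)$-action and the choice of basis of $L$.

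The main obstacle I anticipate is bookkeeping rather than conceptual: matching the normalization of "the geodesic associated to an indefinite binary quadratic form" (orientation, which of $a,b,c$ goes where, the $\gl_2(\ZZ)$- versus $\SL_2(\ZZ)$-action) with the normalization of "the point in $\mathfrak{H}$ associated to a positive-definite binary quadratic form / a rank-$2$ lattice up to homothety," and ensuring the lattice on which we compute Gram matrices is the right one (the projection $M_K^\perp$ of $\mc{O}_K$ to the trace-zero space, which may differ from $\mc{O}_K \cap V_0$ by a finite-index issue that doesn't affect the homothety class or the geodesic). Once coordinates are fixed, the identity $|w|^2 - \Re(w^2) = 2(\Im w)^2$ does all the real work, so the proof should be short.
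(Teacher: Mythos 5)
Your route is genuinely different from the paper's proof of Theorem~\ref{thm:3TKonMK}, which is a short equivariance argument: compute the shape and primitive trace-zero form of a single field such as $\QQ(\sqrt[3]{2})$ (shape $i2^{1/3}$, form $xy$, and $i2^{1/3}$ visibly lies on the imaginary axis $\gamma_{Q_0}$), then observe that the shape, the trace-zero form, and the associated geodesic are all $\gl_2(\RR)$-equivariant while the binary cubic forms of complex cubic orders constitute a single $\gl_2(\RR)$-orbit. What you propose is instead essentially the paper's proof of the higher-degree generalization in \S\ref{sec:higherdegree}: your identity $|w|^2-\Re(w^2)=2(\Im w)^2$ is the $n=3$ case of the Taussky-style real/imaginary block computation there, which establishes the majorant identity $MT^{-1}M=T$ for the Gram matrices. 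Your approach generalizes; the paper's is shorter and coordinate-free.

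There is, however, a gap in your final deduction. Simultaneous diagonalizability of $M_K^\perp$ and $T_K^\perp$ is automatic (any positive-definite form is simultaneously diagonalizable with any symmetric form), and ``$M=T+c\ell^2$ with $c>0$ and $\ell$ rank one'' does not characterize the geodesic: for $T=u^2-w^2$, whose geodesic is the unit semicircle from $-1$ to $1$, the form $u^2+2w^2=T+3w^2$ has associated point $i\sqrt{2}$, which does not lie on it. Moreover $\Im v^{(2)}$ is not a null direction of $T_K^\perp$ but its negative eigendirection, and adding the square of an actual null linear form of $T_K^\perp$ never yields a definite form at all. The criterion you need is quantitative: writing $T=\lambda u^2-\mu w^2$ with $\lambda,\mu>0$, the point of a positive-definite $M$ lies on $\gamma_T$ if and only if $M$ is proportional to $\lambda u^2+\mu w^2$, i.e.\ the negative eigenvalue is exactly reflected --- equivalently, $M$ is proportional to a Siegel majorant of $T$, satisfying $MT^{-1}M=T$, a condition invariant under simultaneous change of basis. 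Your explicit forms do satisfy it: with $(u,w)=(\Re v^{(2)},\Im v^{(2)})$ one gets $T_K^\perp=6u^2-2w^2$ and $M_K^\perp=6u^2+2w^2$, an exact sign flip. So the computation does close the proof once you state and verify the correct criterion; ``a positive multiple of $(\Im v^{(2)})^2$'' is strictly weaker and does not suffice. One smaller point: use the projection lattice $\mc{O}_K^\perp$ for both forms rather than $\mc{O}_K\cap V_0$; these are genuinely different lattices, and while the incidence of the point of $M$ with the geodesic of $T$ is basis-free once the same lattice is used on both sides, switching lattices changes both the shape and the $\gl_2(\ZZ)$-class of the geodesic, hence the statement being proved.
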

Here, $T_K^\perp$ is the projection of $T_K$ to the trace zero space.

In fact, in \S\ref{sec:higherdegree}, we prove a generalization of this to all number fields. The shape of $K$ is a point in the space of rank $n-1$ lattices and a construction of Siegel's associates to a quadratic form $Q$ a subspace $\mf{H}_{Q}$ of this space. We prove the following theorem.
\begin{theoremA}\label{thm:higherdegree}
	If $K$ is a number field (of degree $\geq3$), then its shape lies on $\mf{H}_{T_K^\perp}$.
\end{theoremA}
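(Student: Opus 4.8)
The plan is to realize $M_K^\perp$ as an \emph{explicit majorant} (in Siegel's sense) of $T_K^\perp$ by unwinding the Minkowski embedding. Write $K_\RR:=K\otimes_\QQ\RR\cong\RR^{r_1}\times\CC^{r_2}$, with $\sigma_1,\dots,\sigma_{r_1}$ the real embeddings and $\tau_1,\dots,\tau_{r_2}$ a choice of one complex embedding from each conjugate pair. With the convention of the introduction (embed $K$ into $\CC^n$ via all $n$ embeddings and use the standard Hermitian form), the two defining quadratic forms become, for $\alpha\in K_\RR$,
\[
M_K(\alpha,\alpha)=\sum_{i=1}^{r_1}\sigma_i(\alpha)^2+2\sum_{j=1}^{r_2}|\tau_j(\alpha)|^2,\qquad
T_K(\alpha,\alpha)=\sum_{i=1}^{r_1}\sigma_i(\alpha)^2+2\sum_{j=1}^{r_2}\Re\big(\tau_j(\alpha)^2\big).
\]
These are the identities that make the statement hold on the nose rather than merely up to a homothety.

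First I would record the orthogonal decomposition $K_\RR=V_+\oplus V_-$, where $V_+$ is spanned by the real places together with the real axis $\RR\subset\CC$ at each complex place (so $\dim V_+=r_1+r_2$) and $V_-$ is spanned by the imaginary axis $i\RR\subset\CC$ at each complex place (so $\dim V_-=r_2$). Writing $\tau_j(\alpha)=u_j+iv_j$, the formulas above show immediately that: (i) $V_+$ and $V_-$ are orthogonal for both $M_K$ and $T_K$; (ii) $M_K=T_K$ on $V_+$ (each equals $\sum\sigma_i(\alpha)^2+2\sum u_j^2$), while $M_K=-T_K$ on $V_-$ (the former is $2\sum v_j^2$, the latter $-2\sum v_j^2$); and (iii) $T_K$ is positive-definite on $V_+$ and negative-definite on $V_-$. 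Hence $M_K$ is already a majorant of $T_K$ on all of $K_\RR$, which in particular reproves Taussky's signature $(r_1+r_2,\,r_2)$ of the trace form.

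Next I would cut everything down to the trace-zero space. Since $\Tr_{K/\QQ}(\alpha)=\sum\sigma_i(\alpha)+2\sum u_j$ depends only on the $V_+$-component, the functional $\Tr_{K/\QQ}$ vanishes identically on $V_-$ and restricts to a nonzero functional on $V_+$; therefore the trace-zero space $V^\perp$ splits as $(V_+\cap\ker\Tr_{K/\QQ})\oplus V_-$, of dimensions $(r_1+r_2-1)+r_2=n-1$, and the decomposition of the previous paragraph restricts to it. On the first summand $M_K^\perp=T_K^\perp$ is positive-definite; on $V_-$ we have $M_K^\perp=-T_K^\perp$ with $T_K^\perp$ negative-definite. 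This is exactly the assertion that $M_K^\perp$ is a majorant of $T_K^\perp$ — equivalently $M_K^\perp\,(T_K^\perp)^{-1}\,M_K^\perp=T_K^\perp$ as maps $V^\perp\to(V^\perp)^\ast$ — so $\sh(K)$ lies on $\mf{H}_{T_K^\perp}$.

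I expect the only real friction to be bookkeeping rather than mathematics: matching the precise definition of $\mf{H}_Q$ from \S\ref{sec:geodesics} (Siegel's majorants form a fixed-determinant locus, whereas $\sh(K)$ is a homothety class) so that ``is a majorant of $T_K^\perp$'' and ``lies on $\mf{H}_{T_K^\perp}$'' are visibly the same claim; verifying that the $M_K^\perp$ obtained by orthogonally projecting $\mathcal{O}_K$ onto $V^\perp$ is indeed $M_K|_{V^\perp}$ (using that $1\in K$ is $M_K$-orthogonal to $V^\perp$ since $M_K(1,\alpha)=\Tr_{K/\QQ}(\alpha)$); and handling the degenerate cases, namely $r_2=0$ (then $V_-=0$, $T_K^\perp$ is definite, $\mf{H}_{T_K^\perp}$ is a single point, and $M_K^\perp=T_K^\perp$), and $n=2$, which is excluded. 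The genuinely new geometric observation — that the trace functional is supported on the ``plus'' part of the Minkowski decomposition, so passing to trace zero preserves the majorant relation — is the step I would present in full detail.
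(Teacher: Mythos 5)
Your proof is correct and is essentially the paper's argument: the explicit decomposition $K_\RR=V_+\oplus V_-$ into the real and imaginary directions at the complex places, with $M_K=T_K$ on $V_+$ and $M_K=-T_K$ on $V_-$ and the two summands orthogonal for both forms, is exactly the content of the Taussky-style change of basis (the matrices $R$ and $D$) used in \S\ref{sec:higherdegree}, and the majorant identity then follows block-by-block in both treatments. The only (cosmetic) difference is the order of operations: the paper first proves $\wt{M}\wt{T}^{-1}\wt{M}=\wt{T}$ for the unprojected forms and then passes to the trace-zero space via the block splitting along $j(1)$, whereas you restrict the decomposition to the trace-zero space first by noting the trace functional is supported on $V_+$.
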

When $K$ is not totally real, $\mf{H}_{T_K^\perp}$ is positive-dimensional, whereas $\sh(K)$ is a point. It is in this sense that we say that the positive-definiteness of $M_K^\perp$ seems to retain more information than $T_K^\perp$.

Considering $T_K^\perp$ as a quadratic form over $\ZZ$, we can study the associated primitive quadratic form $\TZP$ obtained by dividing by the gcd of the coefficients. We can characterize which (equivalence classes of) quadratic forms arise as $[\TZP]$ for some cubic field $K$ (here we denote by $[Q]$ the equivalence class of the binary quadratic form $Q$ under the action of $\gl_2(\ZZ)$). The work of Bhargava and Shnidman \cite[\S5]{Manjul-Ari} tells us most of the story, though the following result, which we prove in \S\ref{sec:TameVsWild}, adds the finishing touch.
\begin{theoremA}\label{thm:TameVsWild}
	If $K$ is a cubic field such that the discriminant of its quadratic resolvent field is divisible by $3$, then
	\[
		\ord_3\Delta\!\left(\TZP\right)=\begin{cases}
				0 & \text{if }3\text{ is wild in }$K$,\\
				2 & \text{if }3\text{ is tame in }$K$.
			\end{cases}
	\]
\end{theoremA}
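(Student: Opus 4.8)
The plan is to compute the $3$-adic valuation of the discriminant of $T_K^\perp$ directly, by choosing a convenient $\ZZ$-basis of $\mc{O}_K$ adapted to the prime $3$, and then dividing out by the content to pass to $\TZP$. Let $F$ be the quadratic resolvent field of $K$, so by hypothesis $3\mid\disc(F)$; this forces $F=\QQ(\sqrt{-3})$ (up to the $3$-part of the conductor) and, more importantly, it pins down the shape of the splitting/ramification of $3$ in $K$. The first step is to recall from class field theory (or directly from the Delone–Faddeev/Davenport–Heilbronn parametrization, as used in \cite{Manjul-Ari}) exactly which local behaviors of $3$ in $K$ are compatible with $3\mid\disc(F)$: the two cases in the statement, ``$3$ wild'' and ``$3$ tame,'' correspond to the two possible ramification types, and the hypothesis rules out the unramified-in-$K$ case. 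I would make this explicit by writing down the possible factorizations of $3\mc{O}_K$ and the corresponding conductor exponents.

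Next I would reduce the computation of $\ord_3\Delta(T_K^\perp)$ to a purely local one at $3$. Since $T_K^\perp$ is the restriction of the trace form $T_K$ to the trace-zero sublattice $\mc{O}_K^{(0)} = \{x \in \mc{O}_K : \Tr_{K/\QQ}(x)=0\}$, and since $\mc{O}_K = \ZZ \oplus \mc{O}_K^{(0)}$ has a complementary $\ZZ$-line on which the trace form restricts to $\langle n\rangle = \langle 3\rangle$, there is a clean relation $\disc(T_K) \doteq n \cdot \disc(T_K^\perp) \cdot (\text{index correction})$; more precisely, $\disc(T_K^\perp)$ differs from $\disc(K)/3$ by a bounded square factor coming from the fact that $\ZZ e_1 \oplus \mc{O}_K^{(0)}$ need not be all of $\mc{O}_K$. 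This index is itself a power of $3$ (it is $1$ or $3$) depending on whether $3 \mid \Tr_{K/\QQ}$ of a generator, which is controlled by the same ramification data. So the task becomes: (i) compute $\ord_3 \disc(K)$ in each case — this is standard, giving $\ord_3\disc(K) \in \{1, 2\}$ for tame and $\ord_3 \disc(K) \in \{3, 4, 5\}$ for wild depending on the precise conductor; (ii) compute $\ord_3$ of the index $[\mc{O}_K : \ZZ \oplus \mc{O}_K^{(0)}]$; and (iii) compute the content $c = \gcd$ of the coefficients of the integral binary quadratic form $T_K^\perp$, which also turns out to be a power of $3$, so that $\ord_3\Delta(\TZP) = \ord_3\disc(T_K^\perp) - 2\ord_3 c$.

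The main obstacle, I expect, is step (iii): pinning down the content $c$ of $T_K^\perp$ as a $\ZZ$-form. One must show that in the wild case the form is ``as divisible by $3$ as possible'' so that the $3^2 = 9$ in $\disc(T_K^\perp)$ is entirely absorbed into $c^2$, leaving $\ord_3\Delta(\TZP)=0$; whereas in the tame case the content is prime to $3$, so the full $\ord_3\disc(T_K^\perp)=2$ survives. The cleanest route is to work over $\ZZ_3$: choose an explicit $\ZZ_3$-basis of $\mc{O}_K \otimes \ZZ_3$ (using that $K\otimes\QQ_3$ is either a ramified cubic field or $\QQ_3 \times (\text{ramified quadratic})$, etc.), write the trace-zero lattice and its Gram matrix, and read off both the discriminant and the content by inspection. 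Here the results of \cite[\S5]{Manjul-Ari} should supply most of the needed local computations, and Theorem E is, as the excerpt says, the ``finishing touch'' — the remaining case-check that $c$ has the asserted $3$-valuation. I would organize the proof as a short table: for each local type of $3$ in $K$, list $\ord_3\disc(K)$, $\ord_3$ of the covolume index, $\ord_3 c$, and then $\ord_3\Delta(\TZP)$, verifying it equals $0$ or $2$ as claimed.
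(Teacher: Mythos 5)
Your outline is a genuinely different route from the paper's (which works entirely through the Delone--Faddeev correspondence: it takes a binary cubic form $F$ for $\mc{O}_K$, uses \cite[Lemma 11]{DH} to read off the factorization of $F$ mod $3$ from the ramification type, normalizes $F$ by a $\gl_2(\ZZ)$-change of variables so that prescribed coefficients are divisible by $3$, and then reads the $3$-divisibility of the coefficients of the Hessian $H_F$ directly, with an extra step via the discriminant formula and maximality when $\ord_3\Delta(K)=5$). As it stands, though, your proposal is a plan rather than a proof: the entire content of the theorem is concentrated in your steps (ii) and (iii), and those are exactly the computations you defer to an unproduced table. Two concrete problems. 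First, in step (i) the tame case gives $\ord_3\Delta(K)=1$ only (for $3=\mf{p}_1^2\mf{p}_2$ the different exponent at $\mf{p}_1$ is $e-1=1$), not $\{1,2\}$; and you should note that the hypothesis ``$3$ divides the discriminant of the quadratic resolvent'' is equivalent to $\ord_3\Delta(K)$ being odd, so the cases to treat are $\ord_3\Delta(K)=1$ (tame) and $\ord_3\Delta(K)\in\{3,5\}$ (wild) --- the wild case with $\ord_3\Delta(K)=4$ is excluded, and the case $\ord_3\Delta(K)=5$ genuinely requires more work than $\ord_3\Delta(K)=3$.

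Second, and more seriously, $T_K^\perp$ in this paper is \emph{not} the restriction of the trace form to the trace-zero sublattice $\mc{O}_K^{(0)}=\{x\in\mc{O}_K:\Tr_{K/\QQ}(x)=0\}$; it is the form induced on $\mc{O}_K^\perp$, the image of $\mc{O}_K$ under $\alpha\mapsto 3\alpha-\tr(\alpha)$. Since $\mc{O}_K^\perp$ is $3$ times the orthogonal projection $\mathrm{proj}(\mc{O}_K)$, and scaling a lattice does not change the associated primitive form, $\TZP$ is the primitive form of $\mathrm{proj}(\mc{O}_K)$, which contains $\mc{O}_K^{(0)}$ with index $[\mc{O}_K:\ZZ\oplus\mc{O}_K^{(0)}]\in\{1,3\}$. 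An index-$3$ sublattice changes the discriminant of the associated primitive form by a factor of $3^{\pm2}$ or $3^0$ depending on how the content changes --- which is precisely the dichotomy between $0$ and $2$ that the theorem asserts. So the discrepancy between your lattice and the paper's cannot be absorbed into a ``bounded square factor''; determining that index (equivalently, whether $\tr(\mc{O}_K)=\ZZ$ or $3\ZZ$) in each ramification type is itself a nontrivial piece of the argument that your proposal does not supply. Until the index, the content, and the reconciliation with the perp-map definition are actually computed in each of the three cases $\ord_3\Delta(K)=1,3,5$, the proof is not complete.
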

We refer to this phenomenon as the Tame--Wild Dichotomy. With this and \cite[\S5 and Theorem~4]{Manjul-Ari} in hand, we can say the following.
\begin{corollary}
	Let $K$ be a cubic field whose quadratic resolvent field has discriminant $d$. Let
	\begin{equation}
		D=\begin{cases}
			-3d& \text{if $3$ is tame in }$K$\\
			-d/3& \text{if $3$ is wild.}
		\end{cases}
	\end{equation}
	Then, $\TZP$ has discriminant $D$. Furthermore, as $K$ varies over fields with fixed $d$ and fixed choice of whether $3$ is tame or wild in $K$, the $\TZP$ of such $K$ are equidistributed amongst the finitely many equivalence classes\footnote{Because of ambiguous forms, to get equidistribution we must either count \textit{oriented} cubic fields and use $\SL_2(\ZZ)$-equivalence of binary quadratic forms, or weight the forms by the ``size'' of their automorphism group (ambiguous forms having automorphisms groups that are twice as big).} of binary quadratic forms of discriminant $D$ as the discriminant of $K$ goes to infinity.
\end{corollary}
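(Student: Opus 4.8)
The plan is to deduce the corollary by combining the discriminant bookkeeping---for which Theorem~\ref{thm:TameVsWild} supplies the decisive $3$-adic input---with the well-known bijection between cubic rings and $\gl_2(\ZZ)$-classes of integral binary cubic forms and the counting theorem \cite[Theorem~4]{Manjul-Ari}. I would begin from the identity $\disc(T_K^\perp)=-3\Delta(K)$, which holds because $T_K^\perp$ is, up to $\gl_2(\ZZ)$-equivalence, the Hessian of a binary cubic form representing the cubic ring $\mc{O}_K$ (and which can also be checked directly inside the orthogonal complement of $\ZZ\cdot1$; see also \cite[\S5]{Manjul-Ari}). Writing $\Delta(K)=df^2$ via the standard relation between the discriminant of a non-cyclic cubic field and that of its quadratic resolvent (so that $f$ is the conductor of $K$ relative to its resolvent), and letting $g$ be the gcd of the three coefficients of $T_K^\perp$, one has $\disc(\TZP)=\disc(T_K^\perp)/g^2=-3df^2/g^2$; thus the first assertion reduces to pinning down $g$ prime by prime.

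At a prime $p\neq3$ the $p$-adic content of the Hessian is governed by the non-maximality of $\mc{O}_K$ at $p$, and the local analysis of \cite[\S5]{Manjul-Ari} gives $\ord_p g=\ord_p f$, whence $\ord_p\disc(\TZP)=\ord_p d=\ord_p D$. At $p=3$ with $3\mid d$, Theorem~\ref{thm:TameVsWild} says $\ord_3\disc(\TZP)$ equals $2$ in the tame case and $0$ in the wild case; since a fundamental discriminant divisible by $3$ has $\ord_3 d=1$, comparing with $\ord_3(-3df^2/g^2)=2+2\ord_3 f-2\ord_3 g$ forces $\ord_3 g=\ord_3 f$ in the tame case and $\ord_3 g=1+\ord_3 f$ in the wild case, i.e.\ $\disc(\TZP)=-3d$ respectively $-d/3$; when $3\nmid d$ the prime $3$ enters $\TZP$ in the same tame fashion, again by the local computation, giving $\disc(\TZP)=-3d$. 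Assembling the local contributions yields $\disc(\TZP)=D$, the sign of $D$ being that of $-3\Delta(K)$.

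For the equidistribution, the parametrization of (oriented) cubic rings by integral binary cubic forms up to $\SL_2(\ZZ)$, refined by \cite[\S5]{Manjul-Ari}, shows that a cubic field with fixed quadratic resolvent is recorded, inside the class group of binary quadratic forms of discriminant $D$, precisely by the class $[\TZP]$. Hence the family of (oriented) cubic fields with fixed $d$, fixed choice of tame or wild at $3$, and $|\Delta(K)|$ bounded maps to the finitely many such classes via $K\mapsto[\TZP]$, and it remains to show that the fibres of this map have asymptotically equal size as $|\Delta(K)|\to\infty$; this is exactly the content of \cite[Theorem~4]{Manjul-Ari}, whose main term for the count of such fields is independent of the chosen class. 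The footnote records the only subtlety: ambiguous forms carry an extra automorphism, so to obtain honest equidistribution one either counts oriented fields against $\SL_2(\ZZ)$-classes or weights each $\gl_2(\ZZ)$-class inversely to the size of its automorphism group.

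I expect the main obstacle to be the matching of normalizations in the middle step: verifying that the primitive binary quadratic form abstractly produced by the Bhargava--Shnidman parametrization is literally $\TZP$---which is precisely what forces the discriminant computation above, with Theorem~\ref{thm:TameVsWild} as its $3$-adic heart---and, hand in hand with this, keeping orientations and ambiguous-form weights consistent so that ``equidistributed'' is the correct and sharp conclusion. Once that dictionary is fixed, the corollary is a translation of \cite[Theorem~4]{Manjul-Ari}.
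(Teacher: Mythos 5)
Your proposal follows essentially the same route as the paper, which offers no separate argument for this corollary: it is presented as the immediate combination of Theorem~\ref{thm:TameVsWild} (the $3$-adic input) with Bhargava--Shnidman's \S5 (which pins the discriminant of the primitive trace-zero form down to $-3d$ or $-d/3$) and their Theorem~4 (equidistribution of the classes), exactly the three ingredients you assemble; your prime-by-prime bookkeeping with the content $g$ of the Hessian is just a spelled-out version of the \S5 citation, and the factor-of-$6$ discrepancy between $T^\perp_F=6H_F$ and the bare Hessian is harmless since you only ever use the primitive form.

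One step of your local analysis is not justified as written: the assertion that ``when $3\nmid d$ the prime $3$ enters $\TZP$ in the same tame fashion.'' It is possible for $3$ to be wildly (totally) ramified in $K$ while $3\nmid d$, namely when $\ord_3\Delta(K)=4$; for instance $K=\QQ[x]/(x^3+6x+4)$ has $\Delta(K)=-324$, quadratic resolvent $\QQ(i)$ with $d=-4$, and $3=\mf{p}^3$. For such fields the tame congruence normal form ($3\mid a,b,d$, $3\nmid c$) does not apply, and Theorem~\ref{thm:TameVsWild} is silent since its hypothesis is $3\mid d$ (equivalently $\ord_3\Delta(K)$ odd), so your claimed conclusion $\disc(\TZP)=-3d$ there needs its own computation (one must show the content has $\ord_3=2$). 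Note also that in this corner the corollary's literal dichotomy would output $D=-d/3\notin\ZZ$, so the statement itself carries the implicit hypothesis $3\mid d$ in the tame/wild alternative; your write-up silently departs from the literal statement here without flagging it. Away from this edge case, and granting the \S5 and Theorem~4 citations, your argument is correct and coincides with the paper's intended proof, including the orientation/ambiguous-form normalization recorded in the footnote.
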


	We note that, in \cite{Manjul-Ari}, the authors refer to $\TZP$ as the ``shape of $K$'' even when $K$ is not real. This is in contrast to the original definition of the shape in \cite{terr}. In the real case, they are indeed discussing the shape of $K$ and so the above corollary tells us about the distribution of shapes of real cubic fields with fixed quadratic resolvent.

After this result, what remains is to understand the distribution of shapes of complex cubic fields with fixed $[\TZP]$. We prove that they are equidistributed on the geodesic corresponding to $[\TZP]$ with respect to its natural hyperbolic measure as the discriminants of the fields go to infinity. For pure cubic fields, i.e.\ those fields whose $\TZP$ is (equivalent to) $xy$ or $3xy-y$, this is proved in the author's previous article \cite[Theorem~C]{PureCubicShapes}. In that case, the corresponding geodesics have \textit{infinite} length and the equidistribution is in a ``regularized'' sense as described in \textit{ibid.} For non-pure complex cubic fields, the geodesics in question all have finite length. We will prove the following result in \S\ref{sec:equidistribution}.

\begin{theoremA}\label{thm:Equidistribution}
	Let $[Q]$ be a(n equivalence class of a) quadratic form that arises as the $[\TZP]$ of some non-pure complex cubic field $K$. Let $[\gamma_Q]$ denote the associated geodesic in $\gl_2(\ZZ)\backslash\mf{H}$ endowed with the measure $\mu_Q$ it inherits from the hyperbolic metric on $\mf{H}$. Let $W$ be a $\mu_Q$-continuity set\footnote{Recall that a \textit{continuity set} is a measurable subset whose boundary has measure $0$.} of $[\gamma_Q]$ and let
	\[
		N(Q;X,W)=\{K\text{ complex cubic field}:[\TZP]=[Q], \sh(K)\in W, |\Delta(K)|<X\}.
	\]
	Then,
	\begin{equation}
		\lim_{X\rightarrow\infty}\frac{N(Q;X,W)}{N(Q;X,[\gamma_Q])}=\frac{\mu_Q(W)}{\mu_Q([\gamma_Q])},
	\end{equation}
	i.e. the shapes are equidistributed as the discriminant of $K$ goes to infinity. More specifically, there is a constant $C_Q>0$ such that
	\begin{equation}\label{eqn:MainGrowthFormula}
		N_Q(X;W)=C_Q\mu_Q(W)\sqrt{X}+o(\sqrt{X}).
	\end{equation}
\end{theoremA}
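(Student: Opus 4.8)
The plan is to adapt the method of Bhargava and Piper~H \cite{Manjul-Piper,PiperThesis} --- used there to equidistribute shapes of $S_n$-number fields --- to the count of complex cubic fields with fixed quadratic resolvent carried out by Bhargava and Shnidman \cite{Manjul-Ari}. Recall from \cite[\S5]{Manjul-Ari} that the complex cubic fields $K$ with $[\TZP]=[Q]$ are parametrized, after a squarefree sieve isolating maximal orders, by the $\Gamma$-orbits of integral points of a lattice $\Lambda$ in a real vector space $V$, where $\Gamma$ is the relevant arithmetic group, $|\Delta(K)|$ is (a fixed constant times) a polynomial invariant on $V$, and the resolvent constraint $[\TZP]=[Q]$ cuts out a subfamily $V_Q$ whose archimedean orbit space fibres over the geodesic $[\gamma_Q]$, the fibre direction being a one-parameter ``discriminant scaling'' $t$ along which $|\Delta|$ grows, with $|\Delta|\asymp t^{2}$.

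The first---and decisive---step is to exhibit the shape as the geodesic parameter. Since $\sh$ is by definition insensitive to rotating and rescaling the Minkowski lattice, it factors through a continuous map $V_Q\to\gl_2(\ZZ)\backslash\mf{H}$, and by Theorem~B its image is exactly $[\gamma_Q]$. The crucial refinement is that, after quotienting by the scaling direction $t$, this map is the arc-length parametrization of $[\gamma_Q]$: a Jacobian computation, using that $M_K^\perp$ and $T_K^\perp$ are simultaneously diagonalized by the linear forms coming from the real and complex embeddings of $K$, identifies the natural measure on the archimedean orbit space of $V_Q$ with a constant multiple of the product $dt\,d\mu_Q$. In particular $\sh$ is a submersion onto $[\gamma_Q]$ in the relevant sense, so the preimage of a $\mu_Q$-continuity set $W$ is a continuity set in $V_Q$.

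With this dictionary in place I will reduce $N(Q;X,W)$ to a lattice-point count exactly as in \cite{Manjul-Ari}: up to the maximality sieve, whose contribution is $o(\sqrt{X})$ by the uniform tail estimates already used there, $N(Q;X,W)$ equals the number of $\Gamma$-orbits of points of $\Lambda$ lying in a fixed fundamental domain, with $\sh$ in $W$ and the discriminant invariant at most $cX$. By the measure identification of the previous step this region has volume $(\text{const})\cdot\mu_Q(W)\cdot\sqrt{X}+o(\sqrt{X})$, the exponent $1/2$ coming from $|\Delta|\asymp t^{2}$ together with $W$ being bounded. Here the hypothesis that $K$ is non-pure enters precisely as advertised: $[\gamma_Q]$ is then a \emph{compact} geodesic, $\mu_Q([\gamma_Q])<\infty$, so the region is bounded in the shape directions and grows only polynomially in $t$, where the fundamental domain is benign; consequently Davenport's lemma on lattice points in expanding regions applies with error $o(\sqrt{X})$, and no cusp cutoff or regularization is needed, in contrast with the pure-cubic case of \cite[Theorem~C]{PureCubicShapes}. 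The boundary contribution is $o(\sqrt{X})$ because $\mu_Q(\partial W)=0$. This yields \eqref{eqn:MainGrowthFormula} with $C_Q=c_Q/\mu_Q([\gamma_Q])>0$, where $c_Q$ is the constant obtained from the case $W=[\gamma_Q]$; dividing the formula for $W$ by that for $[\gamma_Q]$ gives the stated ratio, hence the equidistribution.

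I expect the main obstacle to be the first step: pinning down the precise relation between the Minkowski majorant $M_K^\perp$ (through the archimedean embeddings of $K$) and the hyperbolic arc-length coordinate on $[\gamma_Q]$, and matching it against the coordinates in which \cite{Manjul-Ari} runs the geometry of numbers, so that the pushforward of counting measure is genuinely proportional to $\mu_Q$ and continuity sets pull back to continuity sets. Once that dictionary is established, the counting and sieving are routine modifications of \cite{Manjul-Ari} with one extra \emph{bounded} archimedean constraint imposed.
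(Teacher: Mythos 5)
Your overall route is the same as the paper's: apply the Bhargava--Piper~H method to the Bhargava--Shnidman parametrization of cubic rings with fixed primitive trace-zero form, identify the archimedean orbit space as (discriminant scaling)~$\times$~(geodesic flow), and finish with a maximality sieve. The step you flag as the main obstacle is indeed the paper's central technical point (Lemma~\ref{lem:RofV}), and it is done not by diagonalizing $M_K^\perp$ and $T_K^\perp$ via the embeddings but by working in the two-dimensional space $\VQ_\RR$ of pairs $(b,c)$, on which $\GO_Q^0(\RR)\cong\RR_{>0}\times\RR^\times$ acts by the twisted cubic action: the Jacobian of the orbit map is $\det g$, the $\lambda$-factor carries the discriminant (which scales by $\det(g)^2$, giving the exponent $1/2$), and the $\alpha$-factor surjects onto the closed geodesic via the $n_Q$-fold cover $\pi_Q$, which pushes $d^\times\alpha$ forward to $\mu_Q$. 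One caveat on your counting step: Davenport's lemma does not apply to an arbitrary continuity set $W$ (it needs bounded semialgebraic regions), so the paper instead approximates $\mc{R}_{1,W}$ from inside and uses the $W$--$\ol{W}$ trick of Bhargava--Piper~H to turn two lower bounds into equalities; your alternative (count $=$ volume $+\,o(\mathrm{area})$ for bounded Jordan-measurable regions) does suffice for the stated $o(\sqrt{X})$ error, so this is bookkeeping rather than substance, provided you also discard reducible points via the $O(X^{1/4})$ bound.

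The genuine omission is the passage from what the parametrization counts to what the theorem asserts. Bhargava--Shnidman's correspondence parametrizes \emph{oriented} cubic rings by $\SO_Q(\ZZ)$-orbits, so the lattice count and sieve produce $N^{\Or}$, a count of oriented fields with $[\TZP]_1=[Q]_1$ and shape in a subset of $[\gamma_Q]_1\subseteq\SL_2(\ZZ)\backslash\mf{H}$; the theorem concerns unoriented fields, $\gl_2(\ZZ)$-classes of $Q$, and $W\subseteq[\gamma_Q]\subseteq\gl_2(\ZZ)\backslash\mf{H}$. The paper devotes a separate argument to this translation: when $Q$ is not ambiguous, $[Q]$ splits into $[Q]_1$ and $[wQ]_1$ and one checks $\mu_Q([\gamma_Q])=\mu_Q([\gamma_Q]_1)$; when $Q$ is ambiguous, $[\gamma_Q]_1\rightarrow[\gamma_Q]$ is a double cover, each unoriented ring gives two oriented ones with mirror-image shapes, and the clean relation $N(Q;X,W)=\tfrac{1}{2}N^{\Or}(Q;X,\wt{W})$ requires ruling out the coincidence of the two oriented shapes, i.e.\ shapes on the boundary of the fundamental domain --- which is exactly where Theorem~\ref{thm:avoidboundaries} (and the non-pure hypothesis, a second time) enters. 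Your proposal, phrased for an unspecified group $\Gamma$, never confronts this, so as written it proves the oriented/$\SL_2(\ZZ)$ statement; the factor-of-two and ambiguous-form bookkeeping (or at least a measure-zero argument for boundary shapes) must be added to obtain the theorem as stated.
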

In \cite[Theorem~C]{PureCubicShapes}, what is proven is that when $K$ is pure cubic, equation~\eqref{eqn:MainGrowthFormula} holds for all \textit{compact} $\mu_Q$-continuity sets $W$. We prove analogues of Theorem~\ref{thm:Equidistribution} along the way for orders, as well as oriented fields and orders in Theorem~\ref{thm:equidorientedorders}, Corollary~\ref{cor:equidorientedfields}, and Theorem~\ref{thm:equidorders}, respectively.

As an additional bit of information on the location of shapes of complex cubic fields, we have the following result that we prove in \S\ref{sec:avoidboundary} and that we use in the proof of Theorem~\ref{thm:Equidistribution}.
\begin{theoremA}\label{thm:avoidboundaries}
	The only complex cubic fields whose shape lies on the boundary of the space of two-dimensional shapes (or, equivalently, whose lattice has extra automorphisms) are the wild pure cubic fields.
\end{theoremA}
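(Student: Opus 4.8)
The plan is to combine Theorem~\ref{thm:3TKonMK}, the computation of $\disc(\TZP)$ in the Corollary following Theorem~\ref{thm:TameVsWild}, and the recovery of $K$ from the coordinates of its shape established in \S\ref{sec:complete_invariant}. Recall that a point of the space of two‑dimensional shapes lies on the boundary exactly when the corresponding lattice has an extra automorphism, and that, identifying this space with $\gl_2(\ZZ)\backslash\mf{H}$ and working in the fundamental domain $\mc{F}=\{z:0\le\Re z\le\tfrac12,\ |z|\ge1\}$, the boundary is the union of the three mirror edges $\{\Re z=0\}$, $\{\Re z=\tfrac12\}$, $\{|z|=1\}$ (the axes of the reflections $z\mapsto-\ol{z}$, $z\mapsto1-\ol{z}$, $z\mapsto1/\ol{z}$ in $\gl_2(\ZZ)$), together with the orbifold corners $[i]$ and $[\rho]$ lying on these edges. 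Writing $\mc{R}\subset\mf{H}$ for the $\gl_2(\ZZ)$-orbit of these edges---equivalently, the geodesics whose associated primitive binary quadratic form is $\gl_2(\ZZ)$-equivalent to $xy$ or to $x^2-y^2$, i.e.\ has discriminant $1$ or $4$---the shape of $K$ lies on the boundary precisely when every lift of $\sh(K)$ to $\mf{H}$ lies on $\mc{R}$.

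The key observation is that every geodesic in $\mc{R}$, and likewise every $\gl_2(\ZZ)$-translate of the geodesic $\gamma_Q$ attached to $Q=\TZP$, is a Euclidean semicircle (or vertical line) with rational centre and rational squared radius: for $\gamma_Q$ this holds because the endpoints are the roots of $aX^2+bX+c$, so the centre is $-b/(2a)$ and the squared radius is $(b^2-4ac)/(4a^2)$, and both properties survive the rational M\"obius action of $\gl_2(\ZZ)$. Two \emph{distinct} such rational geodesics meet in at most one point, which---as one sees by subtracting their defining equations---has rational real part and rational squared imaginary part, hence is a CM point. Now by the Corollary, $\disc(\TZP)$ equals $-3d$ when $3$ is tame in $K$ and $-d/3$ when $3$ is wild, where $d<0$ is the fundamental discriminant of the quadratic resolvent of $K$; since $-3d$ is divisible by $3$ while $-d/3\in\{1,4\}$ forces $d\in\{-3,-12\}$ and only $d=-3$ is a fundamental discriminant, one finds $\disc(\TZP)\in\{1,4\}$ if and only if $d=-3$ and $3$ is wild in $K$---that is, $K$ is a wild pure cubic---in which case $\disc(\TZP)=1$ and $[\TZP]=[xy]$.

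This gives the ``if'' direction at once: if $K$ is a wild pure cubic, Theorem~\ref{thm:3TKonMK} places $\sh(K)$ on $[\gamma_{xy}]$, which is the mirror edge $\{\Re z=0\}$ of $\mc{F}$, so $\sh(K)$ lies on the boundary. Conversely, suppose $K$ is a complex cubic field that is not a wild pure cubic. Then $\disc(\TZP)\notin\{1,4\}$, so no $\gl_2(\ZZ)$-translate of $\gamma_Q$ lies in $\mc{R}$. If $\sh(K)$ were on the boundary, its fundamental‑domain lift $z_K$ would lie both on a translate of $\gamma_Q$---by Theorem~\ref{thm:3TKonMK}---and on a reflector of $\mc{R}$; being the intersection of two \emph{distinct} rational geodesics, $z_K$ would be a CM point, so $\Re z_K\in\QQ$ and $(\Im z_K)^2\in\QQ$. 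But \S\ref{sec:complete_invariant} shows that $K=\QQ(\Re z_K)$ when $K$ is not pure and $K=\QQ(\Im z_K)$ when $K$ is pure; the former forces $K=\QQ$ and the latter $[K:\QQ]\le2$, both impossible. Hence $\sh(K)$ lies in the interior, and the theorem follows.

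The step I expect to be most delicate is the bookkeeping in the first paragraph: one must carefully identify the boundary of the space of two‑dimensional shapes with the mirror edges of $\mc{F}$ (so that ``boundary'' and ``extra automorphism'' genuinely agree), enumerate the reflections in $\gl_2(\ZZ)$ and confirm that their axes---together with all $\gl_2(\ZZ)$-translates---are exactly the rational geodesics of discriminant $1$ or $4$, and treat the degenerate vertical‑line reflectors and the corners $[i]$, $[\rho]$ within the same framework. A smaller point to be careful about is that being a CM point (and hence having rational real part in any integral‑quadratic representative) is $\gl_2(\ZZ)$-invariant, so it is harmless that the recovery statements of \S\ref{sec:complete_invariant} may be phrased for a different representative of $\sh(K)$. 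An alternative, more computational route would be to write down the Gram matrix of $M_K^\perp$ explicitly using a root of a depressed cubic generating $K$, read off $\sh(K)$, and check directly that its reduction into $\mc{F}$ meets an edge only for wild pure cubics; the argument above trades that reduction theory for the rational‑geodesic observation.
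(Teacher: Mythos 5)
Your argument is correct, but it follows a genuinely different route from the paper's. The proof in \S\ref{sec:avoidboundary} uses only Proposition~\ref{prop:irrationality}: the two vertical boundary edges of $\mc{G}$ are excluded at once because their points have rational real part, and the circular arc $|z|=1$ is carried onto the vertical line $\Re z=\tfrac12$ by the explicit matrix $\vect{0&1\\-1&1}$, after which the same irrationality statement, applied to the translated cubic form $F^\prime$, finishes the non-pure case (the pure-cubic dichotomy is quoted from \cite{PureCubicShapes}). You instead import Theorem~\ref{thm:3TKonMK} and the discriminant formula $\disc(\TZP)\in\{-3d,-d/3\}$ to show that unless $K$ is a wild pure cubic the shape geodesic is never a mirror geodesic, so a boundary shape would be the unique intersection point of two distinct rational geodesics, hence a CM point, which the irrationality/degree statements of \S\ref{sec:complete_invariant} (and, for the pure case, \cite[Theorem~B]{PureCubicShapes}) forbid. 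Both arguments bottom out in the same irrationality input; yours buys a cleaner conceptual picture --- the shape geodesic is transverse to the mirror locus and could only meet it in CM points --- and treats all three boundary components and the tame pure cubics uniformly, at the cost of invoking Theorem~\ref{thm:3TKonMK} and the corollary to Theorem~\ref{thm:TameVsWild} (neither of which depends on Theorem~\ref{thm:avoidboundaries}, so there is no circularity) and of the mirror-locus bookkeeping you flag. Two small points to make explicit if you write this up: a geodesic determines its primitive integral form only up to sign, which is what lets you conclude $\disc(\TZP)\in\{1,4\}$ from ``a translate of $\gamma_Q$ is a reflector''; and the transfer of the recovery statements to the fundamental-domain representative is indeed harmless, either via the $\gl_2(\ZZ)$-invariance of CM-ness as you say, or because Proposition~\ref{prop:irrationality} holds for every binary cubic form representing the order, hence for the reduced one.
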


\subsection{Shapes and log terms in asymptotics of counting number fields}
Aside from the inherent interest in understanding the distribution of shapes of number fields, the equidistribution result above together with that of \cite{PureCubicShapes} suggest an interesting explanation for the occurrence of log terms in the asymptotics of counting number fields.

A theorem of Henri Cohen and Anna Morra \cite[Theorem~1.1]{Cohen-Morra}, and independently Bhargava and Shnidman \cite[Theorem~6]{Manjul-Ari}, says that the number of cubic number fields $K$ of fixed quadratic resolvent field $K_2$ (and discriminant bounded by $X$) grows like
\[
	\begin{cases}
		\sqrt{X}&\text{if }K_2\neq\QQ(\sqrt{-3})\\
		\sqrt{X}\log(X)&\text{if }K_2=\QQ(\sqrt{-3}).
	\end{cases}
\]
What we show in this article and \cite{PureCubicShapes} is that for a fixed indefinite quadratic form $Q$ (that occurs as $\TZP$ for some complex cubic field $K$), there is a constant $C_Q>0$ such that for any \textit{compact} continuity set $W$ in $[\gamma_Q]$,
\[
	N_Q(X,W)=C_Q\mu_Q(W)\sqrt{X}+o(\sqrt{X}),
\]
\textit{independent} of the quadratic resolvent field of $K$. The quadratic resolvent field is $\QQ(\sqrt{-3})$ if and only if the field is a pure cubic. In this case, the geodesics in question have infinite length. For instance, for a wild pure cubic field $K$, the shape is rectangular and is thus parametrized by what we call the \textit{ratio} $r_K$ of $K$ that measures the ratio of the length of the sides of the rectangle. The natural measure on the space of rectangular lattices is $\dfrac{dr_K}{r_K}$, so that the measure of the set of rectangles with ratio in the interval $[1,R]$ is proportional to $\log(R)$. We can then think of the extra $\log(X)$ term coming from the measure of the sets $W$ going to infinity.

The author has explored this phenomenon in joint work in other situations.

\begin{itemize}
	\item In \cite{QuarticShapes} with Piper H, we study the shapes of $V_4$-quartic fields. Baily showed (\cite{Baily}) that there is a constant $C_{V_4}>0$ such that the number of $V_4$-quartic fields with discriminant bounded by $X$ is
\[
	C_{V_4}\sqrt{X}\log^2(X)+o(\sqrt{X}\log^2(X)).\footnote{Secondary terms have been determined since Baily's work.}
\]
We show that the shapes of $V_4$-quartic fields $K$ are equidistributed (in a regularized sense) in a two-dimensional space of shapes of infinite measure. A bit more specifically, the shapes are given by rectangular prisms (so-called \textit{orthorhombic} lattices) whose side lengths are in ratios given by the discriminants of the three quadratic subfields of $K$. We may therefore parametrize the shapes by two ratios $r_{K,1}$ and $r_{K,2}$ so that the relevant space of lattices has measure $\dfrac{dr_{K,1}dr_{K,2}}{r_{K,1}r_{K,2}}$. The number of $K$ of discriminant bounded by $X$ whose shape lies in a compact continuity set $W$ is then shown to be proportional to
\[
	\mu(W)\sqrt{X}+o(\sqrt{X}).
\]
Similarly to the case of pure cubic fields, the set of orthorhombic lattices with sides ratios in the box of side $R\times R$ is proportional to $\log^2(R)$ so that we now see two log terms arising. This result is generalized to cases of triquadratic fields in the upcoming PhD thesis of Jamal Hassan.
	\item In joint work with Erik Holmes (\cite{SexticShapes}), we study shapes of sextic fields containing a quadratic subfield. This case is of particular interest as Kl\"{u}ners' counterexample \cite{Kluners} to Malle's conjecture is about fields of this form. Indeed, Kl\"{u}ners studies sextic fields $K$ whose Galois group is $C_3\wr C_2(\cong S_3\times C_3)$. These fields are exactly those non-Galois sextic fields containing some quadratic subfield $K_2$ over which they are Galois. Malle's original conjecture predicts that the number of such $K$ of discriminant bounded by $X$ grows like $\sqrt{X}$; however Kl\"{u}ners shows that even just the number of those with $K_2=\QQ(\omega)$ (where $\omega$ is a primitive cube root of unity) grows like $\sqrt{X}\log(X)$. In fact, Kl\"{u}ners shows that the number of $K$ with $K_2\neq\QQ(\omega)$ grows like $\sqrt{X}$. In \cite{SexticShapes}, we consider such $K$ with a fixed $K_2$ and study their ``$K_2$-shapes'', i.e.\ we take the Minkowski lattice attached to $K$ and project it onto the orthogonal complement of $K_2$ (rather than just the orthogonal complement of $\QQ$). We thus obtain rank $4$ lattices. We show that, when $K_2=\QQ(\omega)$, the shapes live in a one-dimensional space parametrized by a ratio $r_K$ with natural measure $\dfrac{dr_K}{r_K}$, that the number of fields with shape in a compact continuity set grows like $\sqrt{X}$, and that the full space has infinite measure. We also show that, when $K_2\neq\QQ(\omega)$ (and, for simplicity, when the field $K(\omega)$ has class number $1$), the shapes lie in a one-dimensional space that is now parametrized by an \textit{angle} $\theta_K$. The measure of the full space then has finite measure and we show that the number of fields with shape in any continuity set grows like $\sqrt{X}$.
\end{itemize}

These results suggest that the study of shapes could be a fruitful avenue to understanding log terms in asymptotics of counting number fields. In particular, it would be interesting to produce heuristics for these counting functions that rely on understanding what kind of space the given shapes lie in.

\section{Shapes and trace-zero forms of number fields}
In this brief section, we collect the definitions and notation relevant to the discussion of shapes and trace-zero forms of a number field. Some justification for these definitions is provided in the introduction so this section will be pretty dry.

Let $K$ be a degree $n$ number field and let $\sigma_1,\dots,\sigma_n$ be the $n$ distinct embeddings of $K$ into $\CC$. We may collect these together into what we refer to as the \textit{Minkowski embedding of $K$}:
\[
	\funcdef{j}{K}{\CC^n}{\alpha}{(\sigma_1(\alpha),\dots,\sigma_n(\alpha)).}
\]
Let $K_\RR:=\im(j)\otimes_\QQ\RR\subseteq\CC^n$. The restriction of the standard inner product $\langle\cdot\ ,\ \cdot\rangle$ on $\CC^n$ to $K_\RR$ turns $K_\RR$ into a real Euclidean space (see e.g.\ \cite[\S I.5]{Neukirch} for details). We refer to this Euclidean space as the \textit{Minkowski space of $K$} and call its inner product the \textit{Minkowski inner product}. Given any order $R$ in $K$ (or even an ideal in an order), we obtain a full rank lattice $j(R)$ in $K_\RR$. The \textit{shape of $R$} is the equivalence class (under scaling, rotations, and reflections) of the projection of $j(R)$ onto the orthogonal complement of $j(1)$ in $K_\RR$. The \textit{shape of $K$} is the shape of the maximal order $\mc{O}_K$. Concretely, we introduce the ``perp'' map $R\rightarrow R$ defined by
\begin{equation}\label{def:perpmap}
	\alpha^\perp:=n\alpha-\tr(\alpha),
\end{equation}
where $\tr(\alpha)$ is the usual trace map $\tr:K\rightarrow\QQ$. Then, one may verify that $j(\alpha^\perp)$ is $n$ times the orthogonal projection of $j(\alpha)$ onto the orthogonal complement of $j(1)$: the key points being that
\[
	\langle j(1),j(\alpha)\rangle=\tr(\alpha)\quad\text{and}\quad\langle j(1),j(1)\rangle=n.
\]
We let $R^\perp$ be the image of $R$ under this perp map. Then, the shape of $R$ is concretely the equivalence class under scaling, rotations, and reflections of $j(R^\perp)$. Thus, if we begin with a $\ZZ$-basis of $R$, say $1,\alpha_1,\dots,\alpha_{n-1}$, the images of $\alpha_1^\perp,\dots,\alpha_{n-1}^\perp$ under $j$ form a $\ZZ$-basis of $j(R^\perp)$ of $R$. The knowledge of the Gram matrix of the Minkowski inner product with respect to an integral basis of $R$ will then give us, by bilinearity, a Gram matrix representing the shape. The restriction of the Minkowski inner product to $R$ is what we denote $M_R$ and call the \textit{Minkowski form of $R$}.\footnote{Technically, we are composing the Minkowski inner product with the restriction of $j$ to $R$.} Although it does not take values in $\ZZ$, its values are algebraic integers. We denote by $M_R^\perp$ the induced pairing on $R^\perp$. The scaling by $n$ present in the definition of the perp map is chosen so that $M_R^\perp$ again takes integral values. We may sometimes use $M_R^\perp$ to denote the positive-definite binary quadratic form associated to the pairing. We let $M_K$ and $M_K^\perp$ denote the forms for $R=\mc{O}_K$. When $n=3$, $M_K^\perp$ is a positive definite \textit{binary} quadratic form $Q(x,y)=rx^2+sxy+ty^2$ and thus has a point $z(Q)=a+bi$ associated to it in the upper-half plane $\mf{H}=\{x+iy\in\CC:y>0\}$.
\begin{lemma}\label{lem:zQ}
	For a positive definite $Q(x,y)=rx^2+sxy+ty^2$, we have that $z(Q)=a+bi$ where
	\begin{equation}\label{eqn:xycoords}
		a=\dfrac{s}{2r}\quad\text{and}\quad b=\dfrac{\sqrt{4rt-s^2}}{2r}.
	\end{equation}
\end{lemma}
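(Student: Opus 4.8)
The plan is to unwind what the point $z(Q)\in\mf{H}$ attached to a positive-definite binary quadratic form actually is. Such a $Q$ is the Gram form, with respect to some suitably oriented basis, of a rank-two lattice in the Euclidean plane; identifying the plane with $\CC$ so that the first basis vector is $\lambda\in\CC^\times$, the second is then $\lambda\,z(Q)$ for a unique $z(Q)\in\mf{H}$, and this is the point associated to $Q$. Equivalently, $z(Q)$ is the unique root in $\mf{H}$ of $rz^2-sz+t=0$, or again the unique $z\in\mf{H}$ for which $Q(x,y)=c\,|x+y\,z|^2$ holds identically for some $c>0$. I would take this last description as the starting point.

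So write $z(Q)=a+bi$ with $a,b\in\RR$ and $b>0$, and expand
\[
	\bigl|x+y(a+bi)\bigr|^2=(x+ay)^2+(by)^2=x^2+2a\,xy+(a^2+b^2)\,y^2.
\]
Matching this against $c^{-1}Q(x,y)=c^{-1}(rx^2+sxy+ty^2)$ forces $c=r$, and then $s=2ra$ together with $t=r(a^2+b^2)$. The first relation immediately gives $a=\dfrac{s}{2r}$. Substituting into the second yields $a^2+b^2=\dfrac{t}{r}$, hence
\[
	b^2=\frac{t}{r}-\frac{s^2}{4r^2}=\frac{4rt-s^2}{4r^2},
\]
and since $Q$ is positive-definite we have $r>0$ and $4rt-s^2>0$, so $b=\dfrac{\sqrt{4rt-s^2}}{2r}>0$. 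This establishes the two formulas of \eqref{eqn:xycoords} and, at the same time, confirms that the point so produced genuinely lies in $\mf{H}$.

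There is no real obstacle here: the statement is a short direct computation once the definition of $z(Q)$ is pinned down. The only points deserving a word of care are fixing the sign convention in that definition (equivalently, choosing the root of $rz^2-sz+t$ in the upper half-plane rather than its complex conjugate), and noting that the positive-definiteness hypothesis is exactly what guarantees $4rt-s^2>0$, so that $b$ is a well-defined positive real number and $z(Q)\in\mf{H}$ to begin with.
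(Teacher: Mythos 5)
Your proof is correct and follows essentially the same route as the paper: both identify $Q$ with the norm form of a lattice in $\CC$ normalized so that the first basis vector is real (the paper takes $v_1=(\sqrt{r},0)$, you factor out $c=r$), and then read off $a$ and $b$ by matching coefficients. Nothing further is needed.
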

\begin{proof}
	We can think of $Q(x,y)$ as being the quadratic form giving the norm squared of a vector $xv_1+yv_2$ in a lattice spanned by two linearly independent vectors $v_1,v_2\in\CC\cong\RR^2$. We also identify the upper-half plane with the set of lattices in $\CC$ up to rotations, reflections, and scaling as usual, i.e.\ we identify the above lattice with $v_2/v_1$ (after a possible reflection, we may assume $v_2/v_1$ has positive imaginary part). The quadratic form does not change under orthogonal transformations, so we may assume $v_1$ lies on the positive real axis, in which case $v_1=(\sqrt{r},0)$. Then, under the correspondence with $\mf{H}$, we must have that $v_2=(\sqrt{r}a,\sqrt{r}b)$. Taking inner products yields the result.
\end{proof}

We can do something similar by replacing the Minkowski inner product with the so-called trace form. The \textit{trace form of $R$} is
\[
	\funcdef{T_R}{R\times R}{\ZZ}{(\alpha,\beta)}{\tr(\alpha\beta).}
\]
This amounts to replacing the standard inner product on $\CC^n$ with the dot product, i.e.\ we omit taking the complex conjugate of one of the vectors. The \textit{trace-zero form of $R$} is the induced pairing $T_R^\perp$ on $R^\perp$. We call this the ``trace-zero'' form since the orthogonal complement of $j(1)$ in $K_\RR$ is indeed the ($\RR$-span of the image under $j$ of the) space of elements of trace $0$ in $K$. As with $M_R^\perp$, we may also denote by $T_R^\perp$ the binary quadratic form associated to  the pairing. Since the form has $\ZZ$-coefficients, we may also consider the \textit{primitive trace-zero form} $T_R^{\perp\prime}$, which is the associated primitive binary quadratic form (i.e.\ the one where we have divided by the greatest common divisor of the coefficients). Again, we use the subscript $K$ for the case $R=\mc{O}_K$.

\section{The Levi--Delone--Fadeev correspondence and shapes of cubic fields}\label{sec:DF}

The Levi--Delone--Fadeev \cite{DF,Levi} correspondence provides a very useful bijection between isomorphism classes of cubic rings (that is (commutative, unital) rings $R$ that are isomorphic to $\ZZ^3$ as $\ZZ$-modules) and $\gl_2(\ZZ)$-equivalence classes of binary cubic forms. We collect in this section some formulas and features of this correspondence that will be of use to us in subsequent sections. We refer to \cite{Manjul-Arul-Jacob}, especially \S2, for more/other details and unreferenced claims (another modern reference is \cite[\S4]{GGS}).

Let $F(x,y)=ax^3+bx^2y+cxy^2+dy^3$ be a binary cubic form with coefficients in $\ZZ$. Associated to it, we have a cubic ring $R_F$ with a basis $1,\alpha,\beta$ such that
\begin{align}
	\alpha\beta&=-ad,\label{eqn:alphabeta}\\
	\alpha^2&=-ac-b\alpha+a\beta\label{eqn:alpha2},\\
	\beta^2&=-bd-d\alpha+c\beta.
\end{align}
The discriminant of $R_F$ equals that of $F$ and is given by
\begin{equation}\label{eqn:disc}
	b^2c^2-4ac^3-4b^3d-27a^2d^2+18abcd.
\end{equation}
Let $\tr:R\rightarrow\ZZ$ be the trace map (i.e.\ the map that sends $\gamma\in R$ to the trace of the $\ZZ$-linear multiplication-by-$\gamma$ map on $R$).
\begin{lemma}\label{lem:minpolys}
	The elements $\alpha$ and $\beta$ satisfy
	\[
		f_\alpha(x)=a^2F\left(\dfrac{x}{a},1\right)\quad\text{and}\quad f_\beta(x)=\dfrac{x^3}{d}\cdot F\left(\dfrac{-d}{x},1\right)
	\]
	respectively. In particular,
	\begin{align}
		\tr(\alpha)&=-b\label{eqn:tralpha}\\
		\tr(\beta)&=c.\label{eqn:trbeta}
	\end{align}
\end{lemma}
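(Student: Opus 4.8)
The plan is to read everything off the multiplication table of $R_F$. First I would write down the matrices, with respect to the basis $1,\alpha,\beta$, of the $\ZZ$-linear maps ``multiplication by $\alpha$'' and ``multiplication by $\beta$'': by definition the trace of $\alpha$ (resp.\ $\beta$) is the trace of the first (resp.\ second) of these matrices, and by Cayley--Hamilton $\alpha$ and $\beta$ each satisfy the characteristic polynomial of the corresponding matrix. Using $\alpha\cdot1=\alpha$, the relation \eqref{eqn:alpha2} for $\alpha\cdot\alpha$, and \eqref{eqn:alphabeta} for $\alpha\cdot\beta$, one gets
\[
	M_\alpha=\begin{pmatrix}0&-ac&-ad\\1&-b&0\\0&a&0\end{pmatrix},
\]
and similarly, using $\beta\cdot1=\beta$, \eqref{eqn:alphabeta} for $\beta\cdot\alpha$, and the structure relation $\beta^2=-bd-d\alpha+c\beta$ for $\beta\cdot\beta$, one gets
\[
	M_\beta=\begin{pmatrix}0&-ad&-bd\\0&0&-d\\1&0&c\end{pmatrix}.
\]

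Next I would compute the two characteristic polynomials. Expanding $\det(xI-M_\alpha)$ along the first row yields $x^3+bx^2+acx+a^2d$, and expanding $\det(xI-M_\beta)$ along the first column yields $x^3-cx^2+bdx-ad^2$. A quick check shows that clearing denominators in $a^2F(x/a,1)$ produces precisely the first polynomial and clearing denominators in $(x^3/d)F(-d/x,1)$ produces precisely the second; so I would phrase the identities $f_\alpha(x)=a^2F(x/a,1)$ and $f_\beta(x)=(x^3/d)F(-d/x,1)$ as equalities of polynomials in $\ZZ[a,b,c,d][x]$, which keeps them meaningful even when $a$ or $d$ vanishes. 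Since $f_\alpha$ and $f_\beta$ are the characteristic polynomials of $\ZZ$-linear endomorphisms of $R_F$, Cayley--Hamilton gives $f_\alpha(\alpha)=0$ and $f_\beta(\beta)=0$, which is the first assertion; reading off the coefficient of $x^2$ (equivalently, $\tr(M_\alpha)$ and $\tr(M_\beta)$) gives $\tr(\alpha)=-b$ and $\tr(\beta)=c$.

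There is essentially no obstacle here: the whole statement is a bookkeeping computation with the structure constants, and the only point requiring a word of care is the formal division by $a$ and $d$ in the stated closed forms, handled as above by interpreting the claim as a polynomial identity in the coefficients. As an alternative to the matrix computation, one could instead verify $\alpha^3+b\alpha^2+ac\alpha+a^2d=0$ directly by expanding $\alpha^3=\alpha\cdot\alpha^2$ via \eqref{eqn:alphabeta} and \eqref{eqn:alpha2} (and similarly for $\beta$), but I prefer the matrix approach because it simultaneously exhibits $f_\alpha,f_\beta$ as characteristic polynomials and thereby yields the trace formulas for free.
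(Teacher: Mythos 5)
Your proof is correct. It takes a mildly different route from the paper: the paper simply expands $\alpha^3=\alpha\cdot\alpha^2=-ac\alpha-b\alpha^2+a\alpha\beta=-ac\alpha-b\alpha^2-a^2d$ using \eqref{eqn:alphabeta} and \eqref{eqn:alpha2} (and says ``a similar calculation works for $\beta$''), i.e.\ exactly the alternative you mention in your last sentence, and then reads the traces off the resulting monic cubics. Your version instead writes out the regular-representation matrices $M_\alpha$, $M_\beta$ and invokes Cayley--Hamilton; this costs two $3\times3$ determinant computations but buys two things the paper's two-line argument leaves implicit: the trace identities $\tr(\alpha)=-b$, $\tr(\beta)=c$ come directly as $\tr(M_\alpha)$, $\tr(M_\beta)$ rather than via the tacit identification of the displayed cubic with the characteristic polynomial of multiplication by $\alpha$ (which needs a word when, say, $1,\alpha,\alpha^2$ are not independent), and your reformulation of $f_\alpha(x)=a^2F(x/a,1)$, $f_\beta(x)=(x^3/d)F(-d/x,1)$ as polynomial identities in $\ZZ[a,b,c,d][x]$ handles the degenerate cases $a=0$ or $d=0$ cleanly, whereas the paper's statement implicitly assumes the formal division makes sense (harmless in the paper's applications, where $F$ is irreducible). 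Both arguments are elementary bookkeeping with the structure constants, and your matrices and characteristic polynomials check out.
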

\begin{proof}
	Using \eqref{eqn:alphabeta} and \eqref{eqn:alpha2}, one sees that
	\begin{align*}
		\alpha^3=\alpha\cdot\alpha^2&=-ac\alpha-b\alpha^2+a\alpha\beta\\
								&=-ac\alpha-b\alpha^2-a^2d.
	\end{align*}
	A similar calculation works for $\beta$.
\end{proof}

The action of $\gl_2(\RR)$ on $F(x,y)$ compatible with the correspondence is the so-called \textit{twisted action}
\[
	g\cdot F(x,y):=\dfrac{1}{\det g}F((x,y)g),
\]
where $(x,y)g$ is the linear change of variables given by the vector-matrix multiplication. Then, $\gl_2(\ZZ)$-equivalence classes of forms correspond to isomorphism classes of rings. The forms themselves correspond to pairs consisting of a cubic ring $R$ and a basis of $R/\ZZ$, namely $F$ corresponds to $(R_F,(\alpha+\ZZ,\beta+\ZZ))$.

We have the following connections between $F$ and ring-theoretic properties of $R_F$. Recall that a cubic ring $R$ is said to be \textit{maximal} at a prime $p$ if there is no cubic ring $R^\prime$ such that $R^\prime\supsetneq R$ and $p\mid[R^\prime:R]$.
\begin{proposition}\label{prop:domainmaximal}
	The cubic ring $R_F$ is an integral domain if and only if $F(x,y)$ is irreducible over $\QQ$. It is not maximal at $p$ if and only if $p$ divides all the coefficients of $F$ or there is an element $g\in\gl_2(\ZZ)$ such that $p^2$ divides the $x^3$-coefficient of $g\cdot F$ and $p$ divides the $x^2y$-coefficient.
\end{proposition}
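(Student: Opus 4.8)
The plan is to treat the two assertions in turn; the first is a formal consequence of Lemma~\ref{lem:minpolys}, while the maximality criterion is the substantial one. For the domain criterion: since $R_F$ is $\ZZ$-torsion free, $R_F\hookrightarrow R_F\otimes_\ZZ\QQ$, so $R_F$ is a domain if and only if the $3$-dimensional $\QQ$-algebra $R_F\otimes\QQ$ is. If $a=0$, then $\alpha\beta=-ad=0$ by \eqref{eqn:alphabeta} while $\alpha,\beta\neq0$, so $R_F$ is not a domain; and $y\mid F$, so $F$ is reducible, consistent. If $a\neq0$, then \eqref{eqn:alpha2} gives $\beta=a^{-1}(\alpha^2+b\alpha+ac)$, so $R_F\otimes\QQ=\QQ[\alpha]$; since $1,\alpha,\beta$, and hence $1,\alpha,\alpha^2$, are linearly independent, the polynomial $f_\alpha$ of Lemma~\ref{lem:minpolys} is the minimal polynomial of $\alpha$ and $R_F\otimes\QQ\cong\QQ[x]/(f_\alpha(x))$, a domain exactly when $f_\alpha$ is irreducible. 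Applying the invertible substitution $x\mapsto ax$ to the identity $f_\alpha(x)=a^2F(x/a,1)$ shows $f_\alpha$ is irreducible if and only if $F(x,1)$, equivalently $F$, is irreducible over $\QQ$; and irreducibility of $F$ forces $a\neq0$, so the two cases above are exhaustive.

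The engine for the maximality statement is the following computation, which I will invoke twice: using the multiplication table \eqref{eqn:alphabeta}--\eqref{eqn:alpha2}, the $\ZZ$-lattice $R'=\ZZ\cdot1\oplus\ZZ\cdot\frac{\alpha}{p}\oplus\ZZ\cdot\beta$, which manifestly contains $R_F$ with index $p$, is a subring of $R_F\otimes\QQ$ if and only if $p^2\mid a$ and $p\mid b$ --- indeed the products one must test are $\frac{\alpha}{p}\cdot\beta=-\frac{ad}{p}$, $\frac{\alpha}{p}\cdot\alpha=-\frac{ac}{p}-b\frac{\alpha}{p}+\frac{a}{p}\beta$, and $\left(\frac{\alpha}{p}\right)^{2}=-\frac{ac}{p^2}-\frac{b}{p}\cdot\frac{\alpha}{p}+\frac{a}{p^2}\beta$, and reading off denominators (and noting $p^2\mid a$ forces $p^2\mid ac$) yields the equivalence. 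For the ``if'' direction of the proposition: if $p$ divides every coefficient of $F$, then $F/p$ is integral, and a short computation shows that for a scalar $t$ the ring attached to $tF$ has basis $1,t\alpha,t\beta$ inside $R_F\otimes\QQ$ (one checks the relations \eqref{eqn:alphabeta}--\eqref{eqn:alpha2} for $tF$); taking $t=1/p$ exhibits $R_{F/p}$ as a cubic ring containing $R_F$ with index $p^2$, so $R_F$ is non-maximal at $p$. If instead some $g\in\gl_2(\ZZ)$ makes $p^2$ divide the $x^3$-coefficient and $p$ the $x^2y$-coefficient of $g\cdot F$, then, replacing $R_F$ by the isomorphic ring $R_{g\cdot F}$, the engine computation produces a cubic overring of index $p$.

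For the ``only if'' direction, assume $R_F$ is non-maximal at $p$ and that $p$ does not divide all coefficients of $F$ (otherwise the first alternative holds). First I would reduce to producing a cubic overring of $R_F$ of index exactly $p$. Such an $R'$ has $R'/R_F$ cyclic of order $p$, generated by $\frac{\gamma}{p}+R_F$ for some $\gamma\in R_F$; one checks $\gamma\notin\ZZ+pR_F$ (otherwise $R'\supseteq\frac1p\ZZ$, and then $\frac{1}{p^2}\notin R'$ contradicts closure), and then a basis-chase --- using the surjectivity of $\gl_2(\ZZ)\to\gl_2(\ZZ/p)$ together with a suitable re-choice of the lift $\alpha$ --- lets one arrange $\gamma=\alpha$, after which the engine computation forces $p^2\mid a$ and $p\mid b$ for the corresponding form $g\cdot F$, which is the second alternative. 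To produce the index-$p$ overring: non-maximality forces $p^2\mid\disc(F)$, so by \eqref{eqn:disc} the reduction $\ol{F}\in\FF_p[x,y]$ is nonzero and has a repeated root; using $\gl_2(\ZZ)$ I would move a repeated root to $[1:0]$, so that $p\mid a$ and $p\mid b$ in the new form, and then carry out a Newton-polygon analysis of $f_\alpha$ over $\Zp$, distinguishing whether $\ol{F}$ has a double or a triple root, to show that if moreover $p^2\nmid a$ then $\Zp[\alpha]$ (respectively, in the double-root case, an analogous subring generated from $\beta$) already equals the maximal order of $R_F\otimes\Qp$, contradicting non-maximality; hence $p^2\mid a$, and with $p\mid b$ the engine computation supplies the overring $R_F+\ZZ\frac{\alpha}{p}$.

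I expect this last Newton-polygon step --- showing that a primitive $F$ whose reduction has a repeated root moved to $[1:0]$ but with $p^2\nmid a$ already gives a $p$-maximal $R_F$ --- to be the main obstacle: it requires the double-versus-triple case split, a discriminant-valuation computation, and a little extra care at $p=2,3$, whereas everything else reduces to bookkeeping with the structure constants \eqref{eqn:alphabeta}--\eqref{eqn:alpha2}. (Since all the conditions in play are $p$-adic and open, one could alternatively phrase the whole argument over $\Zp$ and invoke the density of $\gl_2(\ZZ)$ in $\gl_2(\Zp)$ to descend the ``$\exists g$'' statement to $\gl_2(\ZZ)$.)
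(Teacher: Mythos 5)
Your domain criterion, your index-$p$ ``engine'', and the whole ``if'' direction of the maximality statement are correct and complete; note for context that the paper itself gives no argument here at all --- its proof is a citation to \cite[Proposition~12]{Manjul-Arul-Jacob} and the remarks after Lemma~13 of that paper --- so you are attempting strictly more, essentially the classical Davenport--Heilbronn-style proof. The problems are in the ``only if'' direction. First, the basis-chase ``arrange $\gamma=\alpha$'' does not work as stated: the element $\alpha$ attached to a binary cubic form is the \emph{normalized} lift (the one making $\alpha\beta\in\ZZ$ as in \eqref{eqn:alphabeta}), so by changing the basis of $R_F/\ZZ$ you only control $\alpha$ modulo $\ZZ+pR_F$, whereas the property that $R_F+\ZZ\frac{\alpha}{p}$ is closed under multiplication is \emph{not} invariant under $\alpha\mapsto\alpha+m$ with $p\nmid m$: for $\theta=\sqrt[3]{2}$ and $R=\ZZ+2\theta\ZZ+2\theta^2\ZZ$, the lattice $R+\ZZ\theta^2$ is a ring but $R+\ZZ\bigl(\theta^2+\frac{1}{2}\bigr)$ is not. ``A suitable re-choice of the lift'' cannot repair this, since the normalized lift is determined by the basis of $R_F/\ZZ$; one must adjust the basis mod $p$ itself, and in the relevant situation $p\mid a$ the obvious adjustments do not move the offending integer shift. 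Fortunately this strand is logically redundant: your second step, if completed, already produces the divisibilities $p^2\mid a$, $p\mid b$ for $g\cdot F$, which is the desired conclusion.

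But that second step --- the real engine of the ``only if'' direction --- is both misstated and left undone. Once the repeated root of $\ol{F}$ is moved to $[1:0]$ one has $p\mid a$, and then $\Zp[\alpha]$ is \emph{properly} contained in $R_F\otimes\Zp$ (the change of basis from $1,\alpha,\beta$ to $1,\alpha,\alpha^2$ has determinant $a$ by \eqref{eqn:alpha2}, so the index is $p^{v_p(a)}$ when $a\neq0$), hence it can never ``equal the maximal order of $R_F\otimes\Qp$''. The workable statement has the roles reversed: in the triple-root case ($p\mid a,b,c$, $p\nmid d$, $p^2\nmid a$) it is $f_\beta(x)=x^3-cx^2+bdx-ad^2$ from Lemma~\ref{lem:minpolys} that is Eisenstein at $p$, and since $[R_F\otimes\Zp:\Zp[\beta]]=p^{v_p(d)}=1$ this shows $R_F\otimes\Zp$ itself is maximal; in the double-root case ($p\nmid c$) neither $\Zp[\alpha]$ nor $\Zp[\beta]$ need exhaust $R_F\otimes\Zp$ (one may have $p\mid d$), and the clean route is instead through \eqref{eqn:disc}: with $v_p(a)=1$, $p\mid b$, $p\nmid c$ one finds $v_p(\Delta(F))=v_p(4ac^3)=1$ for odd $p$, contradicting the divisibility $p^2\mid\Delta(F)$ forced by non-maximality, while $p=2$ genuinely needs the finer analysis you defer (one can check $v_2(\Delta(F))\leq3$ and that the Newton polygon of $f_\alpha$ exhibits a ramified quadratic factor, which together force $2$-maximality). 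Since you explicitly flag this step as the main obstacle and do not carry it out --- and since the phrase ``the maximal order of $R_F\otimes\Qp$'' presupposes $\Delta(F)\neq0$, which is neither assumed in the proposition nor known at that point of your argument, though it does come out of the valuation computations just described --- what you have is a plausible outline rather than a proof: the key claim that $p\mid a$, $p\mid b$, $p^2\nmid a$ and content prime to $p$ force $p$-maximality is true, but it is precisely the part that remains to be proved, with the $\alpha$/$\beta$ roles corrected and the small primes handled.
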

\begin{proof}
	This is \cite[Proposition~12]{Manjul-Arul-Jacob} and the comments after \cite[Lemma~13]{Manjul-Arul-Jacob}.
\end{proof}

Suppose $R_F$ is an order in a complex cubic number field $K$; in particular, $F(x,y)$ is irreducible. Let $\eta$ be a root of $F(x,1)$ in $K$. Using Lemma~\ref{lem:minpolys}, we get that
\[
	\alpha=a\eta\quad\text{and}\quad\beta=-d/\eta.
\]
We now wish to say something about the \textit{shape} of $R_F$. Let $\sigma$ be the real embedding of $K$ and let $\tau, \ol{\tau}$ be the pair of its non-real embeddings. Let $\theta=\sigma(\eta)$, $\xi=\tau(\eta)$, and
let $j:K\hookrightarrow\CC^3$ be the Minkowski embedding $\mu\mapsto(\sigma(\mu),\tau(\mu),\ol{\tau}(\mu))$. Then,
\begin{align*}
	j(\eta)&=(\theta,\xi,\ol{\xi}),\\
	j(\alpha)&=(a\theta,a\xi,a\ol{\xi}),\\
	j(\beta)&=(-d/\theta,-d/\xi,-d/\ol{\xi}).
\end{align*}
As a first step towards determining the shape of $R_F$, we will determine the Gram matrix of the Minkowski form of $R_F$ with respect to the basis $1,\alpha,\beta$. By construction, the entries of the Gram matrix will lie in the maximal real subfield of the Galois closure of $K$ and so, following \cite[\S9]{terr}, we will use the $\QQ$-basis $\theta^{-1},1,\theta$ for the field $\QQ(\theta)\subseteq\RR$.
\begin{lemma}
	The minimal polynomial of $\xi$ over $\QQ(\theta)$ is
	\[
		x^2-t_\xi x+n_\xi,
	\]
	where
	\begin{align*}
		t_\xi&=-\frac{b}{a}-\theta\\
		n_\xi&=-\frac{d}{a}\theta^{-1}.
	\end{align*}
\end{lemma}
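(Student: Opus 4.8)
The plan is to use that $\eta$ generates the cubic field $K$, identify $F(x,1)/a$ as its minimal polynomial over $\QQ$, factor this cubic over $\QQ(\theta)$, and read off the quadratic factor via Vieta's formulas.

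First, since $R_F$ is an order in the cubic field $K$, it is an integral domain, so by Proposition~\ref{prop:domainmaximal} the form $F$ is irreducible over $\QQ$; hence $g_\eta(x) := F(x,1)/a = x^3 + \tfrac{b}{a}x^2 + \tfrac{c}{a}x + \tfrac{d}{a}$ is the minimal polynomial of $\eta$ over $\QQ$. Its three roots in $\CC$ are $\theta=\sigma(\eta)$, $\xi=\tau(\eta)$, and $\ol{\xi}=\ol{\tau}(\eta)$, since $\sigma,\tau,\ol{\tau}$ are precisely the embeddings of $K$ into $\CC$; thus $g_\eta(x)=(x-\theta)(x-\xi)(x-\ol{\xi})$ over $\CC$.

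Next, because $\theta\in\QQ(\theta)$, polynomial division yields $g_\eta(x)=(x-\theta)\,h(x)$ with $h(x)=x^2 - t_\xi x + n_\xi$ monic and $h\in\QQ(\theta)[x]$, and comparison with the factorization over $\CC$ forces $t_\xi=\xi+\ol{\xi}$ and $n_\xi=\xi\ol{\xi}$. The quadratic $h$ is irreducible over $\QQ(\theta)$: its roots $\xi,\ol{\xi}$ are non-real, whereas $\QQ(\theta)=\sigma(K)\subseteq\RR$. Therefore $h$ is the minimal polynomial of $\xi$ over $\QQ(\theta)$.

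Finally, Vieta's formulas applied to $g_\eta$ give $\theta+\xi+\ol{\xi}=-b/a$ and $\theta\xi\ol{\xi}=-d/a$, whence $t_\xi=\xi+\ol{\xi}=-\tfrac{b}{a}-\theta$ and $n_\xi=\xi\ol{\xi}=-\tfrac{d}{a}\theta^{-1}$, as claimed. There is no serious obstacle here; the only points deserving a word of justification are the irreducibility of $F$ (so that $g_\eta$ really is the minimal polynomial of $\eta$) and the observation that the real field $\QQ(\theta)$ cannot contain the non-real element $\xi$ (so that $h$ does not split).
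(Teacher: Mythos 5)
Your proof is correct and takes essentially the same route as the paper: both amount to the factorization $F(x,1)=a(x-\theta)(x^2-t_\xi x+n_\xi)$ over $\QQ(\theta)$, which the paper verifies by direct expansion (using $F(\theta,1)=0$) while you read the coefficients off via Vieta. Your added observation that the quadratic is irreducible over $\QQ(\theta)\subseteq\RR$ because $\xi$ is non-real is a point the paper leaves implicit, and is a welcome touch since the lemma asserts minimality.
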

\begin{proof}
	Note that over $\QQ(\theta)$, we have
	\begin{align*}
		a(x-\theta)(x^2-t_\xi x+n_\xi)&=ax^3+(-a)(t_\xi+\theta)x^2+a(n_\xi+\theta)x+(-a)\theta n_\xi\\
		&=ax^3+bx^2+cx+d\\
		&=F(x,1).
	\end{align*}
\end{proof}
\begin{proposition}
	The Gram matrix of $M_{R_F}$ with respect to the basis $1,\alpha,\beta$ is
	\[
		\begin{pmatrix}
			3&-b&c\\
			-b&-a(3d\theta^{-1}+c+b\theta)&-(bd\theta^{-1}+bc+ac\theta)\\
			c&-(bd\theta^{-1}+bc+ac\theta)&-d(c\theta^{-1}+b+3a\theta)
		\end{pmatrix}.
	\]
\end{proposition}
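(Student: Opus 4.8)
The plan is to compute each entry of the Gram matrix $\langle j(\gamma), j(\delta)\rangle$ for $\gamma,\delta \in \{1,\alpha,\beta\}$ directly, using the explicit coordinates $j(1) = (1,1,1)$, $j(\alpha) = (a\theta, a\xi, a\ol{\xi})$, $j(\beta) = (-d/\theta, -d/\xi, -d/\ol{\xi})$ and the standard Hermitian inner product on $\CC^3$. The first row/column entries are immediate from $\langle j(1), j(1)\rangle = 3$ and, via the relations $\langle j(1), j(\gamma)\rangle = \tr(\gamma)$ together with Lemma~\ref{lem:minpolys} (equations \eqref{eqn:tralpha} and \eqref{eqn:trbeta}), from $\tr(\alpha) = -b$ and $\tr(\beta) = c$. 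So the content is in the three entries $\langle j(\alpha), j(\alpha)\rangle$, $\langle j(\alpha), j(\beta)\rangle$, and $\langle j(\beta), j(\beta)\rangle$.

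For these I would express everything in terms of the real quantities $\theta$, $t_\xi = -b/a - \theta$, and $n_\xi = -(d/a)\theta^{-1}$ supplied by the preceding lemma. The key observation is that since $\sigma$ is real, $\langle j(\gamma), j(\delta)\rangle = \gamma\delta|_\sigma + 2\Re(\gamma\delta|_\tau)$ for $\gamma,\delta$ real-valued under the relevant arithmetic; more precisely for $\alpha = a\eta$ we get $\langle j(\alpha),j(\alpha)\rangle = a^2(\theta^2 + |\xi|^2 + |\ol\xi|^2) = a^2(\theta^2 + 2|\xi|^2)$, and since $\xi\ol\xi = n_\xi$ is the norm and $\xi + \ol\xi = t_\xi$ is the trace of $\xi$ over $\QQ(\theta)$, we have $2|\xi|^2 = 2 n_\xi$ only if... wait, $|\xi|^2 = \xi\ol\xi = n_\xi$ directly, so $\langle j(\alpha),j(\alpha)\rangle = a^2(\theta^2 + 2 n_\xi)$. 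Substituting $n_\xi = -(d/a)\theta^{-1}$ gives $a^2\theta^2 - 2ad\theta^{-1}$; then one uses the minimal polynomial $a\theta^3 + b\theta^2 + c\theta + d = 0$, i.e.\ $a\theta^2 = -b\theta - c - d\theta^{-1}$, to rewrite $a^2\theta^2 = -ab\theta - ac - ad\theta^{-1}$, yielding $\langle j(\alpha),j(\alpha)\rangle = -ab\theta - ac - 3ad\theta^{-1} = -a(3d\theta^{-1} + c + b\theta)$, matching the claimed entry. The $\beta$ entries and the cross term are handled the same way: $\langle j(\beta),j(\beta)\rangle = d^2(\theta^{-2} + 2|\xi|^{-2}) = d^2(\theta^{-2} + 2 n_\xi^{-1})$, and $\langle j(\alpha),j(\beta)\rangle = -ad(1 + 2\Re(\xi/\ol\xi))$... here one must be slightly careful and instead write $\langle j(\alpha), j(\beta)\rangle = \sigma(\alpha)\sigma(\beta) + 2\Re(\tau(\alpha)\ol{\tau(\beta)}) = -ad(1 + 2\Re(\xi \cdot \ol{\ol\xi}^{\,-1}))$; since $\alpha\beta = -ad$ is rational it equals $-ad$ under every embedding, so actually $\langle j(\alpha),j(\beta)\rangle = \sigma(\alpha\beta) + \tau(\alpha)\ol{\tau(\beta)} + \ol{\tau(\alpha)}\tau(\beta)$ and using $\alpha\beta = -ad$ rational gives $\tau(\alpha)\tau(\beta) = -ad$, hence $\tau(\alpha)\ol{\tau(\beta)} + \ol{\tau(\alpha)}\tau(\beta) = -ad(\ol{\tau(\beta)}/\tau(\beta) + \tau(\beta)/\ol{\tau(\beta)})$, which after expressing $\tau(\beta) = -d/\xi$ reduces to elementary symmetric functions of $\xi, \ol\xi$, i.e.\ to $t_\xi$ and $n_\xi$, and finally to $\theta$ via the minimal polynomial.

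The main obstacle I anticipate is purely bookkeeping: reducing each intermediate expression modulo the relation $a\theta^3 + b\theta^2 + c\theta + d = 0$ to land in the three-dimensional $\QQ$-span of $\{\theta^{-1}, 1, \theta\}$ with exactly the coefficients stated, and in particular handling the cross term $\langle j(\alpha), j(\beta)\rangle$ where one needs the identity $\xi^2/n_\xi + \ol\xi^2/n_\xi = (t_\xi^2 - 2n_\xi)/n_\xi$ and then a clean substitution. There is no conceptual difficulty — every step is forced — but one must be disciplined about which embedding introduces complex conjugation and about clearing the $\theta^{-1}$ and $\theta^{-2}$ denominators consistently. I would organize the write-up by first recording $\langle j(1),j(1)\rangle$, $\langle j(1),j(\alpha)\rangle$, $\langle j(1),j(\beta)\rangle$, then doing $\langle j(\alpha),j(\alpha)\rangle$ in full detail as the template, and finally stating that $\langle j(\beta),j(\beta)\rangle$ and $\langle j(\alpha),j(\beta)\rangle$ follow by the identical manipulation, perhaps noting the symmetry $\beta \leftrightarrow \alpha$, $\theta \leftrightarrow \theta^{-1}$, $a \leftrightarrow d$, $b \leftrightarrow c$ that swaps the two diagonal entries and fixes the cross term.
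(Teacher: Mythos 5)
Your proposal is correct and follows essentially the same route as the paper: the first row and column via $\langle j(1),j(\gamma)\rangle=\tr(\gamma)$ with Lemma~\ref{lem:minpolys}, and the remaining entries by expressing everything through $t_\xi$, $n_\xi$ and reducing with the relation $a\theta^3+b\theta^2+c\theta+d=0$ (the paper displays exactly your identity $\xi/\ol{\xi}+\ol{\xi}/\xi=(t_\xi^2-2n_\xi)/n_\xi$ for the cross term). No gaps; the remaining work is the same bookkeeping the paper also leaves to the reader.
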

\begin{proof}
	For any $\gamma\in K$, $\langle j(1),j(\gamma)\rangle=\tr(\gamma)$, so the first row (and column) follow from equations \eqref{eqn:tralpha} and \eqref{eqn:trbeta}. The rest follows from the previous lemma and the fact that each of $\theta,\xi,$ and $\ol{\xi}$ satisfy $F(x,1)$. For instance, one may begin with
	\begin{align*}
		\langle j(\alpha),j(\beta)\rangle&=-ad\left(1+\frac{\xi}{\ol{\xi}}+\frac{\ol{\xi}}{\xi}\right)\\
			&=-ad\left(1+\frac{\xi^2+\ol{\xi}^2}{\xi\ol{\xi}}\right)\\
			&=-ad\left(1+\frac{(\xi+\ol{\xi})^2-2\xi\ol{\xi}}{\xi\ol{\xi}}\right)\\
			&=-ad\left(1+\frac{t_\xi^2-2n_\xi}{n_\xi}\right).
	\end{align*}
\end{proof}

\begin{proposition}\label{prop:general_shape}
	The Gram matrix of $M_{R_F}^\perp$ with respect to the basis $\alpha^\perp,\beta^\perp$ is
	\begin{equation}\label{eqn:MGram}
		\begin{pmatrix}
			-3(9ad\theta^{-1}+(b^2+3ac)+3ab\theta)&-3(3bd\theta^{-1}+2bc+3ac\theta)\\
			-3(3bd\theta^{-1}+2bc+3ac\theta)&-3(3cd\theta^{-1}+(c^2+3bd)+9ad\theta)
		\end{pmatrix}.
	\end{equation}
\end{proposition}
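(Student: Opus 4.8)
The plan is to reduce the claim entirely to bilinearity together with the Gram matrix computed in the previous proposition. Since $n=3$ here, the perp map \eqref{def:perpmap} together with $\tr(\alpha)=-b$ and $\tr(\beta)=c$ from \eqref{eqn:tralpha} and \eqref{eqn:trbeta} gives $\alpha^\perp=3\alpha+b$ and $\beta^\perp=3\beta-c$, where $b$ and $c$ are read as $b\cdot1$ and $c\cdot1$ in $R_F$. As $j$ is $\QQ$-linear and sends $1$ to $j(1)$, this becomes $j(\alpha^\perp)=3j(\alpha)+b\,j(1)$ and $j(\beta^\perp)=3j(\beta)-c\,j(1)$.

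I would then expand each of $\langle j(\alpha^\perp),j(\alpha^\perp)\rangle$, $\langle j(\alpha^\perp),j(\beta^\perp)\rangle$, and $\langle j(\beta^\perp),j(\beta^\perp)\rangle$ using bilinearity of the Minkowski inner product, the entries $\langle j(\alpha),j(\alpha)\rangle$, $\langle j(\alpha),j(\beta)\rangle$, $\langle j(\beta),j(\beta)\rangle$ from the previous proposition, and the identities $\langle j(1),j(\gamma)\rangle=\tr(\gamma)$, $\langle j(1),j(1)\rangle=3$. For example,
\[
\langle j(\alpha^\perp),j(\alpha^\perp)\rangle=9\langle j(\alpha),j(\alpha)\rangle+6b\,\tr(\alpha)+3b^2=9\langle j(\alpha),j(\alpha)\rangle-3b^2,
\]
and substituting $\langle j(\alpha),j(\alpha)\rangle=-a(3d\theta^{-1}+c+b\theta)$ and pulling out a $-3$ yields the stated $(1,1)$-entry (using $3ac+b^2=b^2+3ac$). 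Likewise the cross term picks up $-3c\,\tr(\alpha)+3b\,\tr(\beta)-3bc=3bc$ on top of $9\langle j(\alpha),j(\beta)\rangle$, and the $\beta^\perp$-diagonal picks up $-6c\,\tr(\beta)+3c^2=-3c^2$ on top of $9\langle j(\beta),j(\beta)\rangle$; collecting terms gives the remaining two entries (with $3bd+c^2=c^2+3bd$).

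There is no genuine obstacle: once the previous proposition is in hand the computation is purely mechanical, and the only thing requiring care is the sign and common-factor bookkeeping so that $b^2+3ac$ and $c^2+3bd$ emerge with the correct signs. If one prefers, the same content can be packaged as a congruence transformation: letting $P$ be the integral, upper-triangular matrix expressing $(1,\alpha^\perp,\beta^\perp)$ in terms of $(1,\alpha,\beta)$, with diagonal entries $(1,3,3)$, the matrix $P^{\T}GP$ --- with $G$ the $3\times3$ Gram matrix of the previous proposition --- has \eqref{eqn:MGram} as its lower-right $2\times2$ block, precisely because $\alpha^\perp$ and $\beta^\perp$ are orthogonal to $j(1)$. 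Either way, I would simply carry out the three expansions.
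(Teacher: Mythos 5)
Your proposal is correct and matches the paper's proof, which simply invokes bilinearity, the Gram matrix of the previous proposition, and the perp map formula \eqref{def:perpmap}; you have just written out the expansions explicitly, and the arithmetic checks out.
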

\begin{proof}
	This follows from the previous proposition and bilinearity, using the formula \eqref{def:perpmap} for the perp map.
\end{proof}
We also denote by $M_{F}^\perp$ the binary quadratic form associated to this Gram matrix. The map $F\mapsto M_{F}^\perp$ is $\gl_2(\RR)$-equivariant, where the action of $g\in\gl_2(\RR)$ on a binary quadratic form $Q(x,y)$ is $(g\cdot Q)(x,y)=Q((x,y)g)$. We denote by $\sh(R_F)$ the point $z(M_F^\perp)\in\mf{H}$.

The \textit{Hessian} of $F$ is the binary quadratic form (\cite[eq.~(2)]{Manjul-Ari})
\begin{equation}\label{eqn:Hessian}
	H_F(x,y):=(b^2-3ac)x^2+(bc-9ad)xy+(c^2-3bd)y^2.
\end{equation}
It represents the trace-zero form of $R_F$ (\cite[Proposition~12]{Manjul-Ari}) (up to scaling). The association $F\mapsto H_F$ is $\gl_2(\RR)$-equivariant and we have that (\cite[Proposition~11]{Manjul-Ari})
\begin{equation}
	\Delta(H_F)=-3\Delta(F).
\end{equation}

Note however that the Gram matrix of the trace-zero form of $R_F$ with respect to $\alpha^\perp,\beta^\perp$ is
\begin{equation}\label{eqn:TGram}
	\begin{pmatrix}
		6(b^2-3ac)	&3(bc-9ad)\\
		3(bc-9ad)	&6(c^2-3bd)
	\end{pmatrix}.
\end{equation}
This is the Gram matrix of $6H_F$, accordingly we let $T^\perp_{F}:=6H_F$.

For technical reasons, in \S\ref{sec:equidistribution}, we will need to work with \textit{oriented} cubic rings as in \cite[p.\ 55]{Manjul-Ari}. An \textit{oriented cubic ring} is a pair $(R,\delta)$ where $R$ is a cubic ring and $\delta$ is an isomorphism $\wedge^3R\cong\ZZ$. This orientation has the effect of fixing an ordered basis of $R/\ZZ$ and as such the shape of an oriented ring is only determined up to $\SL_2(\ZZ)$-equivalence, and therefore lives in $\SL_2(\ZZ)\backslash\mf{H}$. We will be able to translate results about shapes of oriented cubic rings into ones about plain old cubic rings.

\section{The Tame-Wild Dichotomy}\label{sec:TameVsWild}
In this section, we prove Theorem~\ref{thm:TameVsWild}, which provides an interpretation for the discriminant of the primitive trace-zero form in terms of the tame versus wild ramification of $3$ in the cubic field $K$.

We begin by remarking that if $3$ is ramified in $K$, then $\ord_3(\Delta(K))=1,3,4,$ or $5$. Indeed, letting $\mc{D}(K)$ denote the different of $K$, if $3$ factors as $\mf{p}_1^2\mf{p}_2$, then $\mf{p}_1$ is tamely ramified, so that $\ord_{\mf{p}_1}(\mc{D}(K))=1$, and hence $\ord_3(\Delta(K))=1$. Otherwise, $3=\mf{p}^3$ is wildly ramified, so that $3\leq\ord_\mf{p}(\mc{D}(K))\leq5$ by the standard inequality (from the standard reference \cite[{\S}III.6, Proposition 13, and following remark]{Serre}). Since the quadratic resolvent field of $K$ is $\QQ(\sqrt{\Delta(K)})$, the assumption that $3$ divides the latter's discriminant is equivalent to $\ord_3(\Delta(K))$ being odd. Thus, we wish to prove the following result.
\begin{proposition}\label{prop:ord3TZP}
	If $K$ is a cubic field such that the discriminant of its quadratic resolvent field is divisible by $3$, then
	\[
		\ord_3\Delta\!\left(\TZP\right)=\begin{cases}
				0 & \text{if }\ord_3\Delta(K)=3\text{ or }5,\\
				2 & \text{if }\ord_3\Delta(K)=1.
			\end{cases}
	\]
\end{proposition}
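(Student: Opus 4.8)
The strategy is to compute $\ord_3\Delta(\TZP)$ directly from a well-chosen binary cubic form $F$ representing a maximal order in $K$, using the relation $\Delta(\TZP) = \Delta(6H_F) = 36\,\Delta(H_F) = -108\,\Delta(F) = -2^2\cdot 3^3\,\Delta(F)$ together with the normalization that divides out the content. Since the statement only depends on the $\gl_2(\ZZ)$-class of $F$ (equivalently, on $K$), I may replace $F$ by any form in its class that is in a convenient $3$-adically reduced shape. So the first step is: reduce to understanding the $3$-adic behaviour of $H_F$ and its content for a maximal form $F$, when $3$ is ramified in $K$ with $\ord_3\Delta(F)\in\{1,3,5\}$.

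The key input is the local structure of the cubic ring $R_F\otimes\ZZ_3$ at $3$, which by Proposition~\ref{prop:domainmaximal} (maximality at $3$) constrains $F$ modulo $3$ and modulo powers of $3$. I would split into the tame and wild cases. In the \textbf{tame case} ($3 = \mf{p}_1^2\mf{p}_2$, so $\ord_3\Delta(F)=1$), the étale algebra $R_F\otimes\QQ_3$ is $\QQ_3\times(\text{ramified quadratic})$, and after a $\gl_2(\ZZ_3)$ change of variables $F(x,y) \equiv$ (a form cutting out this splitting type) $\pmod 3$; concretely one can arrange $F(x,y) = x\cdot q(x,y) + 3(\cdots)$ with $q$ reducing to an Eisenstein-type quadratic at $\mf{p}_1$. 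One then computes $H_F \pmod{3^k}$ from \eqref{eqn:Hessian}; the claim $\ord_3\Delta(\TZP) = 2$ amounts to showing $3 \mid \text{content}(H_F)$ but the reduced form $H_F/3$ has discriminant prime to $3$ — i.e. exactly one factor of $3^2$ survives after dividing $-108\Delta(F) = -108\cdot 3\cdot(\text{unit})$ by $(\text{content})^2 = 9\cdot(\text{unit})$. In the \textbf{wild case} ($3 = \mf{p}^3$, $\ord_3\Delta(F) \in\{3,5\}$), $F$ can be put in Eisenstein form $F(x,y)\equiv x^3\pmod 3$ with $3\mid c$ (and $3\nmid d$), by Proposition~\ref{prop:domainmaximal}; then from \eqref{eqn:Hessian}, $H_F = (b^2-3ac)x^2 + (bc-9ad)xy + (c^2-3bd)y^2$, and with $3\mid a, 3\mid b, 3\mid c$ one checks the content is divisible by $3$ — in fact by $3^2$ in the $x^2$ and $xy$ coefficients but possibly only $3$ or $3^2$ in $y^2$ depending on $\ord_3\Delta(F)$ — the upshot being that dividing $-108\Delta(F)$ (which has $\ord_3$ equal to $3+3=6$ or $3+5=8$) by $(\text{content})^2$ (which has $\ord_3$ equal to $6$ or $8$) leaves $\ord_3 = 0$. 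The bookkeeping of exactly how many $3$'s appear in the content versus in $\Delta(F)$ is the crux.

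The \textbf{main obstacle} is getting the content computation exactly right in the borderline subcases, especially distinguishing $\ord_3\Delta(F)=3$ from $5$ in the wild case: these correspond to different valuations of $F$'s coefficients (e.g. whether $\ord_3 d = 1$ and the precise $\ord_3$ of $b,c$), and one must verify that in both subcases the content of $H_F$ picks up precisely enough factors of $3$ so that $\ord_3\bigl(\Delta(F)/\text{content}(H_F)^2\bigr)$ lands on $0$. I expect the cleanest route is to work over $\ZZ_3$, use the classification of maximal cubic rings over $\ZZ_3$ by splitting type (there are finitely many, each with an explicit normal-form $F$), and simply compute $H_F$, its content, and $\Delta(\TZP)$ for each — a short finite check once the normal forms are in hand. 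I would also double-check against the examples in Fig.~\ref{fig:equid5} (discriminant $60$, where $3$ is tame) as a sanity test that the exponent is indeed $2$.
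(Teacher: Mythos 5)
Your overall route is the paper's own: normalize $F$ at $3$ using its mod-$3$ splitting type and maximality, then compute $\ord_3$ of the content of the Hessian and use $\Delta(H_F)=-3\Delta(F)$. However, the $3$-adic bookkeeping in your sketch does not close, and in the decisive subcase it is wrong as stated. In the tame case you assert $3\mid\mathrm{content}(H_F)$ with $H_F/3$ of discriminant prime to $3$; since $\TZP$ is exactly the primitive form attached to $H_F$ (the factor $6$ in $T_F^{\perp}=6H_F$ is irrelevant once you primitivize), that would give $\ord_3\Delta(\TZP)=0$, not $2$. The truth is the opposite: with the tame normalization $3\mid a,b,d$ and $3\nmid c$, the $y^2$-coefficient $c^2-3bd$ of $H_F$ is prime to $3$, so $\mathrm{content}(H_F)$ is prime to $3$ and $\ord_3\Delta(\TZP)=\ord_3(-3\Delta(F))=2$. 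Your parallel computation with $-108\Delta(F)$ and ``$(\mathrm{content})^2=9\cdot(\mathrm{unit})$'' silently switches to the content of $6H_F$, which is why your two versions of the tame case contradict each other.

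In the wild case your normalization is self-contradictory ($F\equiv x^3\pmod 3$ with $3\nmid d$, then $3\mid a,b,c$); the usable one is $3\mid a,b,c$, $3\nmid d$. With it all three coefficients of $H_F$ are divisible by $3^2$ (so ``possibly only $3$ in the $y^2$-coefficient'' is already off), which, together with the trivial bound $\ord_3\Delta(\TZP)\geq 0$, settles the subcase $\ord_3\Delta(K)=3$. The real crux, which you name but do not resolve, is $\ord_3\Delta(K)=5$: there one needs $\ord_3\mathrm{content}(H_F)\geq 3$, and your stated estimate (content divisible by $3^2$, possibly only $3$, in each coefficient) cannot produce it --- it would leave $\ord_3\Delta(\TZP)\geq 2$ rather than $0$. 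The missing input is maximality at $3$ beyond the mod-$3$ level: from $\Delta(F)\equiv -4b^3d\pmod{3^4}$ one gets $3^2\mid b$; then $\Delta(F)\equiv -4ac^3\pmod{3^5}$ together with $3^2\nmid a$ (otherwise $R_F$ is non-maximal at $3$ by Proposition~\ref{prop:domainmaximal}) forces $3^2\mid c$, and only then does $\gcd(r,s,t)$ reach $3^3$. Your fallback --- computing with explicit normal forms for the finitely many maximal cubic rings over $\ZZ_3$ --- would also work, since the relevant valuations are $\gl_2(\ZZ_3)$-invariant, but the proposal neither carries that out nor supplies the mod-$3^4$/mod-$3^5$ argument, so the case $\ord_3\Delta(K)=5$ is a genuine gap.
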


We will require the following lemma.

\begin{lemma}
	Suppose $F(x,y)=ax^3+bx^2y+cxy^2+dy^3$ is a binary cubic form corresponding to a maximal cubic ring.
	\begin{enumerate}
		\item If $F$ is a cube modulo $3$, then it is $\gl_2(\ZZ)$-equivalent to a form with $a\equiv b\equiv c\equiv0\mod{3}$ and $d\not\equiv0\mod{3}$.
		\item If $F$ factors as a $G_1G_2^2$ modulo $3$, where $G_1$ and $G_2$ are linear forms that aren't constant multiples of each other, then it is $\gl_2(\ZZ)$-equivalent to a form with $a\equiv b\equiv d\equiv0\mod{3}$ and $c\not\equiv0\mod{3}$.
	\end{enumerate}
\end{lemma}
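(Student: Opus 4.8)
\textbf{Proof plan.} The plan is to reduce everything modulo $3$ and exploit transitivity of the twisted $\gl_2(\FF_3)$-action on binary cubic forms over $\FF_3$. Since the reduction map $\gl_2(\ZZ)\to\gl_2(\FF_3)$ is surjective, and since for $g\in\gl_2(\ZZ)$ one has $1/\det g=\det g=\pm1\in\ZZ$, reducing $g\cdot F$ modulo $3$ gives exactly $\bar g\cdot\bar F$, the twisted action of $\bar g\in\gl_2(\FF_3)$ on $\bar F\in\FF_3[x,y]$. So it will be enough, in each case, to produce $\bar g\in\gl_2(\FF_3)$ carrying $\bar F$ into the asserted shape and then lift $\bar g$ to $\gl_2(\ZZ)$ arbitrarily. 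Before doing so I would record two trivial facts over $\FF_3$: (i) cubing is a bijection of $\FF_3$ (as $\gcd(3,|\FF_3^\times|)=1$), so every scalar is a cube and the notion ``cube modulo $3$'' is unambiguous; and (ii) because $R_F$ is maximal, in particular at $3$, Proposition~\ref{prop:domainmaximal} forbids $3\mid\gcd(a,b,c,d)$, i.e.\ $\bar F\neq0$ in $\FF_3[x,y]$. Fact (ii) is the sole use of the maximality hypothesis; it guarantees that the linear forms occurring in the two factorizations are genuinely nonzero.

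For part (1): by fact (i), $\bar F=m^3$ for a linear form $m$, and by fact (ii) $m\neq0$. The group $\gl_2(\FF_3)$ acts transitively on nonzero linear forms, so I would choose $\bar g$ so that the change of variables defining the twisted action carries $m$ to a scalar multiple of $y$; then $\bar g\cdot\bar F=\lambda y^3$ with $\lambda\in\FF_3^\times$. Lifting $\bar g$ to $g\in\gl_2(\ZZ)$, the binary cubic $g\cdot F$ satisfies $a\equiv b\equiv c\equiv0$ and $d\equiv\lambda\not\equiv0\mod{3}$.

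For part (2): by hypothesis $\bar F=G_1G_2^2$ with $G_1,G_2$ nonzero and not proportional. Associating to a nonzero linear form its zero locus in $\PP^1(\FF_3)$, non-proportionality means these are two distinct points; since $\gl_2(\FF_3)$ acts on $\PP^1(\FF_3)$ through $\mathrm{PGL}_2(\FF_3)\cong S_4$, which is $2$-transitive, I would choose $\bar g$ carrying $G_1$ to a scalar multiple of $x$ and $G_2$ to a scalar multiple of $y$. Then $\bar g\cdot\bar F=\mu\,xy^2$ with $\mu\in\FF_3^\times$, and lifting to $g\in\gl_2(\ZZ)$ yields $a\equiv b\equiv d\equiv0$ and $c\equiv\mu\not\equiv0\mod{3}$.

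I expect no serious obstacle here: the argument is bookkeeping around the twisted action plus two standard transitivity statements over $\FF_3$. The one point to watch is that, over $\FF_3$, both the twisted action (through $1/\det\bar g$) and the cube/square-root identifications of fact (i) contribute an overall nonzero scalar, so one only controls $\bar g\cdot\bar F$ up to $\FF_3^\times$; this is harmless, as it does not change which coefficients vanish modulo $3$. If the cleaner normalizations $d\equiv1$ (resp.\ $c\equiv1$) were desired, one could post-compose $\bar g$ with a suitable diagonal matrix, but the stated form is all that is needed in the proof of Proposition~\ref{prop:ord3TZP}.
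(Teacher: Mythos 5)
Your proposal is correct and follows essentially the same route as the paper: reduce modulo $3$, use a change of variables (an element of $\gl_2(\ZZ)$, or equivalently a lift of one from $\gl_2(\FF_3)$) to move the repeated linear factor to $y$ (and, in case (2), the simple factor to $x$), and invoke the maximality criterion of Proposition~\ref{prop:domainmaximal} to rule out $3$ dividing all coefficients, which forces the remaining coefficient to be a unit modulo $3$. The only difference is cosmetic: the paper writes down the explicit unipotent/swap matrices, whereas you appeal to transitivity of $\gl_2(\FF_3)$ on nonzero linear forms (resp.\ $2$-transitivity of $\mathrm{PGL}_2(\FF_3)$ on $\PP^1(\FF_3)$) together with surjectivity of $\gl_2(\ZZ)\to\gl_2(\FF_3)$.
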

\begin{proof}
	If $F(x,y)\equiv(mx+ny)^3\mod{3}$, and $n\equiv0\mod{3}$, then switching $x$ and $y$ does the trick. Otherwise, the change of variable $x^\prime=x$, $y^\prime=-n^{-1}mx+y$ (where $n^{-1}$ is an inverse of $n$ modulo $3$) moves $F$ to a form congruent to $ny^3\mod{3}$. If this new coefficient of $y^3$ were divisible by $3$, then $F$ would correspond to a non-maximal ring by Proposition~\ref{prop:domainmaximal}.
	
	If $F\equiv (m_1x+n_1y)(m_2x+n_2y)^2\mod{3}$, a similar change of variables allows us to assume that $m_2\equiv n_1\equiv0\mod{3}$. Similarly, $c\not\equiv0\mod{3}$ since $F$ corresponds to a maximal ring.
\end{proof}

\begin{proof}[Proof of Proposition~\ref{prop:ord3TZP}]
	Let $F(x,y)=ax^3+bx^2y+cxy^2+dy^3$ be a binary cubic form corresponding to the ring of integers of $K$. Write the Hessian of $F$ as $H_F(x,y)=rx^2+sxy+ty^2$ and recall that it is an integer multiple of $\TZP$. Suppose first that $3$ is wild in $K$. Then, by \cite[Lemma 11]{DH}, $F(x,y)$ is a cube modulo $3$, and so, by the previous lemma, we may assume that $3$ divides each of $a, b,$ and $c$, but not $d$.	By \eqref{eqn:Hessian}, $3^2\mid\gcd(r,s,t)$, so that
	\[
		\ord_3\Delta\!\left(\TZP\right)\leq\ord_3(\Delta(H_F))-4=\ord_3(\Delta(K))-3.
	\]
	This gives the desired result when $\ord_3(\Delta(K))=3$. When this valuation is $5$, we note, by \eqref{eqn:disc}, that $\Delta(F)\equiv-4b^3d\mod{3^4}$, so that $3^2\mid b$. Then,
	\[
		\Delta(F)\equiv-4ac^3\mod{3^5}.
	\]
	 Since $F$ corresponds to a maximal ring, $3^2\nmid a$, so that $3^2\mid c$. This now shows that $3^3\mid\gcd(r,s,t)$, so that
	 \[
		\ord_3\Delta\!\left(\TZP\right)\leq\ord_3(\Delta(H_F))-6=\ord_3(\Delta(K))-5=0,
	\]
	as desired.
	
	Now, suppose $3$ is tamely ramified in $K$. Again by \cite[Lemma 11]{DH} and the previous lemma, we may assume that now $a\equiv b\equiv d\equiv 0\mod{3}$, but $c\not\equiv0\mod{3}$. Then, $3\nmid t$. Thus,
	\[
		\ord_3\Delta\!\left(\TZP\right)=\ord_3(\Delta(H_F))=\ord_3(\Delta(K))+1=2.
	\]
\end{proof}

\section{The shape as a complete invariant}\label{sec:complete_invariant}
In this section, we prove that the shape is a complete invariant in the family of complex cubic fields (Theorem~\ref{thm:complete_invariant} above). More precisely, as stated above, we prove the stronger fact that the shape of a given (isomorphism class of a) complex cubic field is distinct from the shape of any other cubic field. This comes down to an (ir)rationality argument, and is reminiscent of how the discriminant is a complete invariant of quadratic fields: the quadratic field of discriminant $\Delta$ \textit{is} the field obtained by adjoining a square root of $\Delta$ to $\QQ$. Similarly, we will see that the complex cubic field of shape $x+iy\in\mf{H}$ \textit{is} (isomorphic to) the field obtained by adjoining $x$ (or, in the pure cubic case, $y$) to $\QQ$.

First, note that if $K$ is a real cubic field, then we have an equality of binary quadratic forms $M_K^\perp=T_K^\perp$ so that $M_K^\perp$ has coefficients in $\ZZ$. By \eqref{eqn:xycoords}, the $x$-coordinate of $\sh(K)$ is in $\QQ$ and its $y$-coordinate is in a quadratic extension of $\QQ$. Either way, these coordinates are not irrational cubic numbers. Theorem~\ref{thm:complete_invariant} was proved by the author when $K$ is a pure cubic field, in \cite[Theorem~B]{PureCubicShapes}, essentially by showing that (the image of the real embedding of) $K$ is $\QQ(\Im(\sh(K))$. As noted above, $\Re(\sh(K))$ lies in the image of $K$ under its real embedding. It thus suffices to prove the following proposition.

\begin{proposition}\label{prop:irrationality}
	If $K$ is a non-pure, complex cubic field, then $\Re(\sh(K))$ is irrational.
\end{proposition}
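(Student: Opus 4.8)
The plan is to use the explicit formula for $\Re(\sh(K))$ coming from Proposition~\ref{prop:general_shape} and Lemma~\ref{lem:zQ}, write it in terms of the coefficients $a,b,c,d$ of a binary cubic form $F$ attached to $\mc{O}_K$ and the real root $\theta$ of $F(x,1)$, and then argue by contradiction: if $\Re(\sh(K))$ were rational, then a polynomial relation with rational coefficients would force $\theta$ (hence $K=\QQ(\theta)$) to be at most quadratic over $\QQ$, contradicting $[K:\QQ]=3$, unless we land in a degenerate case which turns out to be exactly the pure cubic case that has been excluded.

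First I would make the formula explicit. By Lemma~\ref{lem:zQ}, if $M_F^\perp = rx^2 + sxy + ty^2$ then $\Re(\sh(K)) = \dfrac{s}{2r}$. Reading off the Gram matrix \eqref{eqn:MGram}, we have (up to the common factor $-3$ which cancels in the ratio)
\[
	r = 9ad\theta^{-1} + (b^2+3ac) + 3ab\theta, \qquad s = 2\left(3bd\theta^{-1} + 2bc + 3ac\theta\right).
\]
So $\Re(\sh(K)) = \dfrac{3bd\theta^{-1} + 2bc + 3ac\theta}{9ad\theta^{-1} + (b^2+3ac) + 3ab\theta}$. Suppose this equals some $q\in\QQ$. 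Clearing the denominator (and multiplying through by $\theta$) gives a relation
\[
	3ac\,\theta^2 + (2bc - q\,b^2 - 3qac)\,\theta + 3bd - 9qad + (3ab - 3qac)\,\theta\cdot\theta = 0,
\]
i.e. after collecting, a quadratic relation $\bigl(3ac - 3qa b\bigr)\theta^2 + \bigl(2bc - qb^2 - 3qac\bigr)\theta + \bigl(3bd - 9qad\bigr) = 0$ — more carefully one should recompute the exact coefficients, but the point is that this is a polynomial of degree $\leq 2$ in $\theta$ with rational coefficients. Since $\theta$ is a root of the irreducible cubic $F(x,1)$, it has degree exactly $3$ over $\QQ$, so this relation can only hold if \emph{all} its coefficients vanish. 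The heart of the argument is therefore to show that the simultaneous vanishing of these coefficients forces $F$ to correspond to a pure cubic field.

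The hard part will be this final step: showing that the vanishing of the three rational coefficients above forces $K$ to be pure cubic. Here I would use that $R_F = \mc{O}_K$ (so $F$ is primitive and corresponds to a maximal ring), and that pure cubic fields are precisely those whose trace-zero form $T_K^\perp$, equivalently the Hessian $H_F$ up to scaling, has a prescribed shape — indeed from the introduction the pure cubics are exactly those with $[\TZP]$ equivalent to $xy$ or $3xy-y$ (discriminant essentially $\QQ(\sqrt{-3})$ case). The leading coefficient $3a(c - qb) = 0$ with $a\neq 0$ gives $c = qb$; substituting into the other two vanishing conditions should pin down $q$ and then a relation among $a,b,c,d$ forcing $H_F$ (equivalently $M_F^\perp$) to be proportional to a form with the pure-cubic shape, or forcing $b = c = 0$ outright — note that $b=c=0$ makes $F(x,y) = ax^3 + dy^3$, visibly a pure cubic. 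I expect the cleanest route is to show $b = c = 0$ is forced (perhaps after a $\gl_2(\ZZ)$ change of variables, using $\gl_2(\RR)$-equivariance of $F\mapsto M_F^\perp$ to normalize), reducing to $F = ax^3 + dy^3$.

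As a sanity check and to organize the case analysis, I would run the known real-cubic computation in parallel: in the real case $M_F^\perp$ has rational coefficients and the argument degenerates — $\theta$ drops out — which is consistent with $\Re(\sh(K))\in\QQ$ there and confirms that the $\theta$-dependence of the numerator and denominator is exactly what produces irrationality in the complex non-pure case. One subtlety to handle with care: the possibility that the denominator $r$ vanishes or that $\Re(\sh(K))$ is "rational" only because numerator and denominator share a common $\theta$-dependent factor; since $r>0$ (the form $M_F^\perp$ is positive-definite) the denominator is fine, and any common factor would itself be a polynomial relation in $\theta$ of degree $\leq 2$, handled by the same irreducibility argument. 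Modulo these checks, the contradiction with $[K:\QQ]=3$ completes the proof.
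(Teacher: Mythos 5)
Your overall route is sound and genuinely different from the paper's, but the step you yourself flag as ``the hard part'' --- showing that the simultaneous vanishing of the three rational coefficients forces $K$ to be pure cubic --- is exactly where the content lies, and you leave it open with a guess that is not how it resolves: the middle coefficient does not ``pin down $q$'', and $b=c=0$ is \emph{not} forced. Here is how the step actually closes, in a few lines. Since $F$ is irreducible, $a\neq0$ and $d\neq0$ (and $\theta\neq0$, so multiplying by $\theta$ is harmless and the denominator, being $-\tfrac{1}{3}$ times a diagonal entry of the positive-definite Gram matrix \eqref{eqn:MGram}, is nonzero). Your collected relation $(3ac-3qab)\theta^2+(2bc-qb^2-3qac)\theta+(3bd-9qad)=0$ is correct, and since $1,\theta,\theta^2$ are $\QQ$-linearly independent, all three coefficients vanish. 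The $\theta^2$-coefficient gives $c=qb$, the constant term gives $b=3qa$, and the middle coefficient then equals $c(b-3qa)=0$ automatically, so it yields nothing new. But now $b^2=(3qa)b=3a(qb)=3ac$, i.e.\ $b^2-3ac=0$, so the Hessian $H_F$ has discriminant $(bc-9ad)^2$, a nonzero perfect square; since $\Delta(H_F)=-3\Delta(F)$, the quadratic resolvent of $K$ is $\QQ(\sqrt{-3})$ and $K$ is pure cubic by \cite[Lemma~33]{Manjul-Ari} --- the same input the paper invokes --- contradicting the hypothesis. With that paragraph supplied, your proof is complete (and, like the paper's, applies verbatim to non-maximal orders).

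For comparison, the paper argues differently at the mechanical level: it introduces unknowns $c_1,c_2,c_3$ for the inverse of the denominator in the basis $\theta^{-1},1,\theta$, expands the two equations $1=(\text{denominator})(c_1\theta^{-1}+c_2+c_3\theta)$ and $\rho=(\text{numerator})(c_1\theta^{-1}+c_2+c_3\theta)$ into a $6\times3$ linear system over $\QQ$, and exhibits an explicit row reduction showing the system is inconsistent; this requires separately ruling out $b=0$, $c=0$, and $b^2-3ac=0$. Your clearing-denominators argument avoids the auxiliary unknowns and the matrix computation entirely, needs only $a,d\neq0$ plus the single fact that $b^2-3ac=0$ forces a pure cubic, and in my view is the cleaner of the two; what it does not do (and does not need to do) is produce the explicit certificate of inconsistency that the paper's row reduction provides.
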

\begin{proof}
	Let $F(x,y)=ax^3+bx^2y+cxy^2+dy^3$ be a binary cubic form with integer coefficients corresponding to the ring of integers of $K$. The formula for the shape in Proposition~\ref{prop:general_shape} (and \eqref{eqn:xycoords}) shows that
	\begin{equation}\label{eqn:ReShK}
		\Re(\sh(K))=\frac{3bd\theta^{-1}+2bc+3ac\theta}{9ad\theta^{-1}+(b^2+3ac)+3ab\theta}\in\QQ(\theta).
	\end{equation}
	Introduce three unknown rational numbers $c_1,c_2,$ and $c_3$ such that $c_1\theta^{-1}+c_2+c_3\theta$ is the inverse of the denominator $9ad\theta^{-1}+(b^2+3ac)+3ab\theta$.
	Assume for the sake of contradiction that $\Re(\sh(K))=\rho\in\QQ$. Then, the two equations
		\begin{align*}
			1&=(9ad\theta^{-1}+(b^2+3ac)+3ab\theta)\cdot(c_1\theta^{-1}+c_2+c_3\theta)\\
			\rho&=(3bd\theta^{-1}+2bc+3ac\theta)\cdot(c_1\theta^{-1}+c_2+c_3\theta)
		\end{align*}
	give a system of six linear equations over $\QQ$ in the three unknowns $c_1,c_2,$ and $c_3$ represented by the following matrix
	\[
		M=\begin{pmatrix}
b^{2} - 6 a c & 9 a d & -3 b d & 0 \\
-6 a b & b^{2} + 3 a c & 9 a d -3 b c & 1 \\
-9 a^{2} & 3 a b & -2 b^{2} + 3 a c & 0 \\
- b c & 3 b d & -3 c d & 0 \\
-3 b^{2} + 3 a c & 2 b c & -3 c^{2} + 3 b d & \rho \\
-3 a b & 3 a c & - b c & 0
\end{pmatrix}.
	\]
	We will show that this system has no solutions under the hypotheses of this proposition. We note the following consequences of the hypotheses and the rows of $M$. First, since $F(x,y)$ corresponds to an order in an $S_3$-cubic field, it is irreducible, so both $a$ and $d$ are non-zero. Also, $b$ and $c$ can't both be zero: if they are, then $K=\QQ(\sqrt[3]{-d/a})$, which is a pure cubic. If $b=0$, then the fourth row of $M$ implies that $c_3=0$, which in turn implies, from row three, that $c_1=0$. The first row then implies that $c_2=0$, so that the inverse of the denominator in \eqref{eqn:ReShK} is zero: clearly a contradiction. A similar argument shows that $c$ can't be zero either. Finally, the expression $b^2-3ac$ is non-zero: otherwise, the Hessian of $F(x,y)$ has square discriminant so that $F(x,y)$ corresponds to a pure cubic field \cite[Lemma 33]{Manjul-Ari}. Given these non-vanishing statements, we can use the matrix
	\[
E=\begin{pmatrix}
\frac{b}{b^{2} - 3 a c} & 0 & 0 & \frac{-3 a}{b^{2} - 3 a c} & 0 & 0 \\
\frac{b(b c - 9 a d)}{-3d( b^{2} - 3 a c)} & 1 & 0 & \frac{\frac{1}{3} b^{3} - 2 a b c + 9 a^{2} d}{-d( b^{2} - 3 a c)} & 0 & -1 \\
\frac{9 a^{2} b}{b^{2} - 3 a c} & 0 & b & \frac{-27 a^{3}}{b^{2} - 3 a c} & 0 & 0 \\
\frac{b c}{3d(b^{2} - 3 a c)} & 0 & 0 & \frac{ b^{2} - 6 a c}{3d(b^{2} - 3 a c)} & 0 & 0 \\
3 a b & 0 & -\frac{2}{3} b c & -9 a^{2} & a b & 0 \\
\frac{3 a b}{b^{2} - 3 a c} & 0 & 0 & \frac{-9 a^{2}}{b^{2} - 3 a c} & 0 & 1
\end{pmatrix}
\]
to effect a row reduction of $M$ yielding
\[
EM=\begin{pmatrix}
b & 0 & -3 d & 0 \\
0 & 0 & 0 & 1 \\
0 & 3 a b^{2} & -2 b^{3} + 3 a b c - 27 a^{2} d & 0 \\
0 & b & -2 c & 0 \\
0 & 0 & \frac{4}{3} b^{3} c - 5 a b c^{2} - 6 a b^{2} d + 27 a^{2} c d & a b \rho \\
0 & 3 a c & - b c - 9 a d & 0
\end{pmatrix}
\]
whose second row shows this system is inconsistent, as desired.
\end{proof}
Note that the proof did not require that $F(x,y)$ correspond to a maximal order, therefore this irrationality result still holds for orders in non-pure complex cubic fields.

\subsection{The shapes avoid the boundary}\label{sec:avoidboundary}
In this brief section, we will prove Theorem~\ref{thm:avoidboundaries} as a corollary of Proposition~\ref{prop:irrationality}. The standard Gauss fundamental domain for $\gl_2(\ZZ)\backslash\mf{H}$ is
\[
	\mc{G}=\left\{x+iy:0\leq x\leq\frac{1}{2},x^2+y^2\geq1\right\}.
\]
The boundary of this set consists of two vertical rays
\[
	\{iy:y\geq1\}\quad\text{and}\quad\left\{\frac{1+iy\sqrt{3}}{2}:y\geq1\right\}
\]
and the circular arc
\[
	\{\cos(\theta)+i\sin(\theta):\pi/3\leq\theta\leq\pi/2\}.
\]
If $R$ is an order in a non-pure complex cubic field, then Proposition~\ref{prop:irrationality} above shows that its shape cannot lie on the two vertical boundary components. To prove Theorem~\ref{thm:avoidboundaries}, it therefore suffices to show that the shape cannot lie on the circular arc of the boundary. The matrix
\[
	\vect{0&1\\-1&1}\in\SL_2(\ZZ)
\]
maps the unit semi-circle $\{\cos(\theta)+i\sin(\theta):0<\theta<\pi\}$ to the vertical ray $\left\{\frac{1+iy}{2}:y>0\right\}$ by fractional linear transformation. Thus, if $R$ corresponds to some binary cubic form $F$ such that $M_F^\perp$ gives a point on the circular arc of the boundary, then $F$ is $\SL_2(\ZZ)$-equivalent to a form $F^\prime$ with $M_{F^\prime}^\perp$ giving a point whose real part is $1/2$. This latter option is impossible by Proposition~\ref{prop:irrationality}, so we have proved Theorem~\ref{thm:avoidboundaries}.


\section{Indefinite binary quadratic forms and geodesics on the modular surface}\label{sec:geodesics}

The material in this section is only needed for \S\ref{sec:equidistribution}. We collect some facts and constructions regarding the correspondence between indefinite integral binary quadratic forms and certain geodesics on the modular surface. We refer the reader to \cite{SarnakReciprocal} and \cite[Ch.~9]{EinsiedlerWard} for more details.

Let $Q(x,y)=rx^2+sxy+ty^2$ be a real binary quadratic form and let $D=s^2-4rt$ be its discriminant. There is an action of $\gl_2(\RR)$ on binary quadratic forms given by $(g\cdot Q)(x,y)=Q((x,y)g)$. The \textit{connected component of the identity of the orthogonal similitude group of $Q$} is
\[
	\GO_Q^0(\RR)=\left\{g\in\gl_2(\RR):g\cdot Q=c_gQ,c_g>0,\det(g)>0\right\}.
\]
Let $Q_0(x,y)=xy$. A straightforward calculation shows that $\GO^0_{Q_0}(\RR)$ is the group of diagonal matrices with positive determinant. There is a group isomorphism
\[
	\begin{tikzcd}[row sep=0.2em]
		\RR_{>0}\times\RR^\times\ar[r, "\sim"]&\GO^0_{Q_0}(\RR)\\
		(\lambda,\alpha)\ar[r, mapsto]&g(\lambda,\alpha),
	\end{tikzcd}
\]
where
\[
	g(\lambda,\alpha):=\vect{\lambda\alpha^{-1}\\&\lambda\alpha}.
\]
Suppose $Q(x,y)=rx^2+sxy+ty^2$ is indefinite (i.e.\ $D>0$) with $t\neq0$. Let
\[
	\theta_\pm:=\dfrac{s\pm\sqrt{D}}{2t}
\]
and let
\[
	P=\sqrt{t}\vect{\theta_+&\theta_-\\1&1}.
\]
Then, $P\cdot Q_0=Q$ so that $\GO^0_Q(\RR)=P\GO^0_{Q_0}(\RR)P^{-1}$. Suppose now that $Q$ is an indefinite \textit{integral} binary quadratic form of non-square determinant (in particular $t\neq0$). Let
\[
	\GO^0_Q(\ZZ):=\GO^0_Q(\RR)\cap\gl_2(\ZZ)=\GO^0_Q(\RR)\cap\SL_2(\ZZ)=:\SO_Q(\ZZ).
\]
The set of integer solutions $(U,W)\in\ZZ^2$ to the Pellian equation $u^2-Dw^2=4$ forms a group isomorphic to $\{\pm1\}\times\ZZ$. There is an injective homomorphism of this group into $\QQ(\sqrt{D})^\times$ sending $(U,W)$ to $\frac{1}{2}(U+W\sqrt{D})$. There are then four elements $\epsilon_0=\frac{1}{2}(U_0+W_0\sqrt{D})$ in the image of this homomorphism such that the image is $\{\pm\epsilon_0^m:m\in\ZZ\}$. We denote by $\epsilon_0$ the unique such element that is $>1$. Then, $\langle\pm1,\epsilon_0\rangle$ is isomorphic to $\SO_Q(\ZZ)$ via
\[
	\frac{1}{2}(U+W\sqrt{D})\mapsto M(U,W):=\vect{\dfrac{1}{2}(U-sW)&rW\\-tW&\dfrac{1}{2}(U+sW)}.
\]
It can be verified that
\[
	P\vect{\epsilon_0^{-1}\\&\epsilon_0}P^{-1}=M(U_0,W_0),
\]
so that
\[
	\SO_Q(\ZZ)=P\left\{\pm\vect{\epsilon_0^{-1}\\&\epsilon_0}^m:m\in\ZZ\right\}P^{-1}.
\]

If $Q(x,y)$ is any (real) indefinite binary quadratic form, it has two ``roots'' $\rho_{\pm}\in\PP^1(\RR)$, i.e. ${\rho_\pm=[\rho_{\pm,x}:\rho_{\pm,y}]\in\PP^1(\RR)}$ such that
\[
	Q(x,y)=r(\rho_{+,y}x-\rho_{+,x}y)(\rho_{-,y}x-\rho_{i,x}y).
\]
This allows one to associate to $Q$ a geodesic $\gamma_Q$ in $\mf{H}$. Namely, if neither of $\rho_\pm$ is infinite, $\gamma_Q$ is the semicircle whose diameter is the line connecting $\rho_+$ and $\rho_-$; otherwise, $\gamma_Q$ is a vertical line connecting $\infty$ to whichever of $\rho_\pm$ is not infinite. Note that $\gamma_{Q_0}$ is the positive imaginary axis $\{iy:y>0\}$. There is an action of $\gl_2(\RR)$ on $\ol{\mf{H}}:=\mf{H}\cup\PP^1(\RR)$ by fractional linear transformation given by
\[
	g\cdot z=\vect{a&b\\c&d}\cdot z=\begin{cases}
							\dfrac{az+b}{cz+d} &\det g>0\\
							\dfrac{a\ol{z}+b}{c\ol{z}+d} &\det g<0.\\
						\end{cases}
\]
Letting $g\ast z:=(g^{-1})^T\cdot z$ for $g\in\gl_2(\RR)$, we have that the association $Q\mapsto\gamma_Q$ is $\gl_2(\RR)$-equivariant for this action $\ast$ on $\ol{\mf{H}}$. Furthermore, if $Q(x,y)$ is any (real) definite binary quadratic form, then the association $Q\mapsto z(Q)$ (from Lemma~\ref{lem:zQ}) is also $\gl_2(\RR)$-equivariant for the action $\ast$ on $\mf{H}$. We denote by $[Q]$ (resp.\ $[Q]_1$) the equivalence class of $Q$ under $\gl_2(\ZZ)$ (resp.\ $\SL_2(\ZZ)$), similarly for $[z(Q)]\in\gl_2(\ZZ)\backslash\mf{H}$ (resp.\ $[z(Q)]_1\in\SL_2(\ZZ)\backslash\mf{H}$). We will identify $\gl_2(\ZZ)\backslash\mf{H}$ (resp.\ $\SL_2(\ZZ)\backslash\mf{H}$) with the Gauss fundamental domain $\mc{G}\subseteq\mf{H}$ given by
\begin{equation}\label{eqn:G}
	\mc{G}=\{x+iy:0\leq x\leq\frac{1}{2},x^2+y^2\geq1\}
\end{equation}
and
\begin{equation}\label{eqn:G1}
	\mc{G}_1=\{x+iy:|x|\leq\frac{1}{2},x^2+y^2\geq1\},
\end{equation}
respectively.
We denote by $[\gamma_Q]\subseteq\gl_2(\ZZ)\backslash\mf{H}$ (resp.\ $[\gamma_Q]_1\subseteq\SL_2(\ZZ)\backslash\mf{H}$) the set of points in $\mc{G}$ that are $\gl_2(\ZZ)$-equivalent (resp.\ in $\mc{G}_1$ that are $\SL_2(\ZZ)$-equivalent) to points on $\gamma_Q$. This only depends on $[Q]$ (resp.\ $[Q]_1$).
Then, $[\gamma_{Q_0}]=[\gamma_{Q_0}]_1=\{iy:y\geq1\}$. The matrix
\[
	g_\alpha=\vect{\alpha^{-1}\\&\alpha}, \alpha\in\RR_{>0}
\]
flows the point $i\in\gamma_{Q_0}$ along the geodesic to $g_\alpha\ast i=\alpha^2i$. Thus, $Pg_\alpha P^{-1}$ flows $P\ast i$ along the geodesic $\gamma_Q$.

Suppose from now on that $Q$ is an indefinite integral binary quadratic form with non-square discriminant. As $\alpha$ varies on the interval $(\epsilon_0^{-1},1]$, the flow by $Pg_\alpha P^{-1}$ gives an $n_Q$-fold cover of $[\gamma_Q]_1$ for some positive integer $n_Q$.\footnote{This will never be a simple cover when, for instance, $Q$ is reciprocal.} Specifically, let
\[
	\mc{T}:=\left\{\vect{\alpha^{-1}\\&\alpha}:\alpha\geq1\right\},
\]
\[
	\mc{T}_{>\beta}:=\left\{\vect{\alpha^{-1}\\&\alpha}:1\geq\alpha>\beta\right\},
\]
and
\[
	\mc{T}_{Q}:=\mc{T}_{>\epsilon_0}.
\]
Then,
\begin{equation}\label{eqn:piQ}
	\begin{tikzcd}[row sep=0.2em]
		\pi_Q:\mc{T}_Q\ar[r]&{[\gamma_Q]_1}\\
		\phantom{\pi_Q:\ }g_\alpha\ar[r, mapsto]&{[Pg_\alpha\ast i]_1}
	\end{tikzcd}
\end{equation}
is this $n_Q$-fold cover. The hyperbolic measure on $[\gamma_{Q_0}]_1$ is $d\mu_{Q_0}(iy)=d^\times y=\dfrac{dy}{y}$. It is invariant under the action of the diagonal subgroup of $\SL_2(\RR)$. 
If $f:[\gamma_{Q_0}]_1\rightarrow\CC$ is an $L^1$ function, then
\[
	\int_{[\gamma_{Q_0}]_1}f(z)d\mu_{Q_0}(z)=2\int_\mc{T}f(g_\alpha\ast i)d^\times\alpha.
\]
The hyperbolic measure $\mu_Q$ on $[\gamma_Q]_1$ is obtained by pushing $\mu_{Q_0}$ forward using $P$, i.e.\ if $f:[\gamma_{Q}]_1\rightarrow\CC$ is an $L^1$ function, then
\begin{equation}\label{eqn:transfer_integral}
	\int_{[\gamma_Q]_1}f(z)d\mu_Q(z)=\dfrac{2}{n_Q}\int_{\mc{T}_{Q}}f(\pi_Q(g_\alpha))d^\times\alpha.
\end{equation}


\section{Equidistribution on geodesics}\label{sec:equidistribution}
In this section, we prove Theorems~\ref{thm:3TKonMK} and \ref{thm:Equidistribution} on the equidistribution of shapes of complex cubic fields on certain geodesics of the modular surface $\gl_2(\ZZ)\backslash\mf{H}$ building on the correspondences of the previous section, \cite{Manjul-Piper, PiperThesis}, and \cite{Manjul-Ari}.

To begin with, once we realize that the shape of a complex cubic field $K$ lies on the geodesic determined by its trace-zero form, we can easily show it. Indeed, take your favourite complex cubic field, or a straightforward one like $\QQ(\sqrt[3]{2})$, and compute its shape and its trace-zero form. For $\QQ(\sqrt[3]{2})$, this gives
\[
	\sh(\QQ(\sqrt[3]{2}))=i2^{1/3}\quad\text{and}\quad T_{\QQ(\sqrt[3]{2})}^{\perp\prime}(x,y)=xy=Q_0,
\]
respectively. Verify that the shape lies on the corresponding geodesic, as $i2^{1/3}$ lies on the geodesic $\gamma_{Q_0}$ connecting $0$ to $\infty$. Now, note that the formation of the shape, the trace-zero form, and the associated geodesic are all $\gl_2(\RR)$-equivariant. Since the set of (binary cubic forms corresponding to) complex cubic fields (or, more generally, complex cubic orders) is contained in a unique $\gl_2(\RR)$ orbit, the shape of every complex cubic field lies on the geodesic associated to its trace-zero form.

For the matter of equidistribution, we apply the method of \cite{Manjul-Piper, PiperThesis} to the work of \cite{Manjul-Ari}. For technical reasons, we will follow \cite{Manjul-Ari} and use oriented cubic rings in our proofs.

Let $[Q]_1$ be the $\SL_2(\ZZ)$-equivalence class of a primitive indefinite binary quadratic form
\[
	Q(x,y)=rx^2+sxy+ty^2
\]
of nonsquare discriminant $D=s^2-4rt>0$. We may assume that $t>0$. We introduce some relevant objects from \cite{Manjul-Ari}. Let
\[
	Q^\prime(x,y)=tx^2-sxy+ry^2
\]
be the adjoint quadratic form of $Q$ so that
\[
	Q^\prime(x,y)=t(x-\theta_+y)(x-\theta_-y).
\]
Let
\[
	V_\RR=\RR^2,\quad V_\ZZ=\ZZ^2,
\]
\[
	\VQ_\RR=\left\{(x,y)\in V_\RR:\frac{Q^\prime(x,y)}{rt}>0\right\}
\]
and
\[
	\VQ_\ZZ=	\left\{(x,y)\in V_\ZZ:\frac{Q^\prime(x,y)}{rt}>0,sb\equiv rc\mod{3t},sc\equiv tb\mod{3r}\right\}.
\]
For $(b,c)\in\VQ_\RR$, define
\[
	\Delta(b,c):=-\dfrac{Q^\prime(b,c)^2\Delta(Q)}{3r^2t^2}.
\]
If $F(x,y)=ax^3+bx^2y+cxy^2+dy^3$ is a binary cubic form whose Hessian is a positive multiple of $Q$, let $v_F:=(b,c)\in V_\RR$. Following \cite{Manjul-Ari}, we define a \textit{twisted cubic} action $\ast$ of $\GO^0_Q(\RR)$ on $V_\RR$ via
\[
	g\ast\vect{x\\y}=\dfrac{1}{\det g}\ g^3\!\vect{x\\y},
\]
where the latter denotes the usual matrix-vector multiplication. As in \cite{Manjul-Ari}, one can verify that for $g\in\GO^0_Q(\RR)$ and $F$ whose Hessian is a positive multiple of $Q$,
\[
	v_{g\cdot F}=g\ast v_F.
\]

A crucial ingredient to our proof is the following correspondence of Bhargava--Shnidman.
\begin{theorem}[Theorem~21 of \cite{Manjul-Ari}]
	The isomorphism classes of oriented cubic rings whose primitive trace-zero form is $Q$ are in natural bijection with $\SO_Q(\ZZ)$-orbits (with respect to the action $\ast$) of pairs $(b,c)\in\VQ_\ZZ$. Under this bijection,
	$R$ is the ring corresponding to the binary cubic form
	\[
		ax^3+bx^2y+cxy^2+dy^3,
	\]
	where
	\[
		a=\dfrac{sb-rc}{3t}\quad\text{and}\quad d=\dfrac{sc-tb}{3r}.
	\]
	Furthermore,
	\[
		\Delta(R)=\Delta(b,c).
	\]
\end{theorem}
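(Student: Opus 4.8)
The plan is to derive this from the Levi--Delone--Fadeev correspondence, by writing down explicitly all integral binary cubic forms whose Hessian is a prescribed positive multiple of $Q$ and then tracking the group actions. Recall that integral binary cubic forms modulo $\SL_2(\ZZ)$ (the twisted action, which on $\SL_2$ is ordinary substitution of variables) correspond bijectively to isomorphism classes of oriented cubic rings, the orientation serving exactly to rigidify a basis of $R/\ZZ$ up to $\SL_2(\ZZ)$. Since the trace-zero form of $R_F$ is $6H_F$, and both the Hessian and passage to the primitive form are $\SL_2(\ZZ)$-equivariant, the oriented cubic rings whose primitive trace-zero form is (equivalent to) $Q$ are precisely those coming from forms $F$ with $[H_F]_1=[Q]_1$. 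Within each such $\SL_2(\ZZ)$-orbit I would normalize to a representative with $H_F$ equal to a positive integer multiple of $Q$ on the nose: such representatives exist because $[H_F]_1=[Q]_1$, and two of them, $F$ and $g\cdot F$, differ by a $g\in\SL_2(\ZZ)$ with $g\cdot Q\in\RR_{>0}\cdot Q$, i.e.\ by $g\in\GO^0_Q(\RR)\cap\SL_2(\ZZ)=\SO_Q(\ZZ)$. Hence the left-hand side is in bijection with $\SO_Q(\ZZ)$-orbits of integral forms whose Hessian is a positive multiple of $Q$.

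The heart of the matter is then the linear algebra matching such forms with $\VQ_\ZZ$. Writing $F=ax^3+bx^2y+cxy^2+dy^3$, the condition $H_F=\lambda Q$ becomes, via \eqref{eqn:Hessian},
\[
	b^2-3ac=\lambda r,\qquad bc-9ad=\lambda s,\qquad c^2-3bd=\lambda t
\]
(note $r,t\neq0$ since $\Delta(Q)$ is not a square). Eliminating $\lambda$ from these relations and using $\lambda\neq0$ (again because $\Delta(Q)\neq0$) forces
\[
	a=\frac{sb-rc}{3t},\qquad d=\frac{sc-tb}{3r},
\]
and substituting back gives $\lambda=Q^\prime(b,c)/(rt)$; conversely a direct computation confirms that these formulas always yield $H_F=\lambda Q$ with this $\lambda$. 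Hence $F$ is completely determined by $v_F=(b,c)$, the Hessian is a positive multiple of $Q$ exactly when $Q^\prime(b,c)/(rt)>0$, and $F$ is integral exactly when $3t\mid sb-rc$ and $3r\mid sc-tb$ --- which together are precisely the conditions cutting out $\VQ_\ZZ$ (no separate condition on $\lambda$ is needed, since $\lambda$ is automatically a positive integer, $H_F$ being integral and $Q$ primitive). So $F\mapsto v_F$ is a bijection from these forms onto $\VQ_\ZZ$.

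It remains to match the actions and compute the discriminant. By the equivariance $v_{g\cdot F}=g\ast v_F$ recorded above, $F\mapsto v_F$ is $\SO_Q(\ZZ)$-equivariant and therefore descends to the asserted bijection between isomorphism classes of oriented cubic rings with primitive trace-zero form $Q$ and $\SO_Q(\ZZ)$-orbits on $\VQ_\ZZ$, the ring attached to $(b,c)$ being $R_F$ with the $a,d$ displayed above. For the discriminant, combine $\Delta(R_F)=\Delta(F)$ with $\Delta(H_F)=-3\Delta(F)$ and $H_F=\lambda Q$ to get
\[
	\Delta(R_F)=-\tfrac{1}{3}\lambda^2\Delta(Q)=-\frac{Q^\prime(b,c)^2\Delta(Q)}{3r^2t^2}=\Delta(b,c).
\]

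The step requiring the most care is not the computation but the orientation bookkeeping of the first paragraph: one must check that ``primitive trace-zero form $Q$'' really means the $\SL_2(\ZZ)$-class $[Q]_1$ and not $[-Q]_1$ (which, for indefinite $Q$, is in general a different class, corresponding to Hessians that are negative multiples of $Q$), that the positive-multiple normalization can always be achieved within a given $\SL_2(\ZZ)$-orbit, and that the residual stabilizer is exactly $\SO_Q(\ZZ)$ --- so that $F\mapsto v_F$ descends to a genuine bijection on orbits rather than merely a surjection. The apparent edge cases $b=0$ or $c=0$ need no separate treatment, since the elimination above never divides by $b$ or $c$.
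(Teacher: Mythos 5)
Your argument is correct, but note that the paper does not actually prove this statement: it is quoted verbatim as Theorem~21 of Bhargava--Shnidman \cite{Manjul-Ari} and used as a black box, so there is no internal proof to compare against. What you have written is a legitimate self-contained reconstruction of the Bhargava--Shnidman argument from ingredients the paper does supply (the oriented Levi--Delone--Fadeev correspondence, the identity $T_F^\perp=6H_F$, the equivariance $v_{g\cdot F}=g\ast v_F$, and $\Delta(H_F)=-3\Delta(F)$). I checked the two points where your write-up is terse. First, the solvability claim: setting $H_F=\lambda Q$ and combining $c\cdot(\ref{eqn:Hessian})_{x^2}-b\cdot(\ref{eqn:Hessian})_{xy}$ with the $y^2$-equation gives $3a(3bd-c^2)=\lambda(rc-sb)$, hence $-3at\lambda=\lambda(rc-sb)$ and $a=(sb-rc)/(3t)$ after cancelling $\lambda\neq0$ (symmetrically for $d$); this elimination indeed divides only by $\lambda$, $r$, $t$, all nonzero because $D$ is a nonzero nonsquare, so your assertion that $b=0$ or $c=0$ needs no special treatment holds up. Second, the orbit bookkeeping: within an $\SL_2(\ZZ)$-class with $[H_F^{\mathrm{prim}}]_1=[Q]_1$ a representative with $H_F\in\ZZ_{>0}Q$ exists, two such differ by an element of $\GO^0_Q(\RR)\cap\SL_2(\ZZ)=\SO_Q(\ZZ)$, and the positivity normalization is exactly what distinguishes $[Q]_1$ from $[-Q]_1$ since primitivization divides by the \emph{positive} gcd; your handling of this is right. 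The discriminant identity $\Delta(R_F)=-\lambda^2\Delta(Q)/3=\Delta(b,c)$ also checks out. The only cosmetic imprecision is writing $[H_F]_1=[Q]_1$ when $H_F$ is generally imprimitive; you clearly mean equivalence up to positive integer scaling, which is what the subsequent normalization uses.
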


We port over the definition of irreducible to the space $\VQ_\ZZ$ and say that an element $v\in \VQ_\ZZ$ is \textit{irreducible} if its associated binary cubic form is irreducible.
By Proposition~\ref{prop:domainmaximal}, this is equivalent to the associated cubic ring being an integral domain, i.e.\ an order in a cubic field.

In order to link \cite{Manjul-Ari} to \cite{Manjul-Piper}, we relate constructions in the former to the group $\GO^0_Q(\RR)$. Note that $\GO^0_Q(\RR)\cap\gl_2(\ZZ)=\SO_Q(\ZZ)$.
\begin{lemma}
	$\VQ_\RR$ is a $\GO^0_Q(\RR)$-orbit.
\end{lemma}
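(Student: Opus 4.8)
The plan is to transfer everything to the standard form $Q_0(x,y)=xy$ via the matrix $P$ introduced in \S\ref{sec:geodesics}, where the computation becomes transparent, and then push the result back. First I would recall that $P\cdot Q_0=Q$, hence $\GO^0_Q(\RR)=P\,\GO^0_{Q_0}(\RR)\,P^{-1}$, and that $\GO^0_{Q_0}(\RR)=\{g(\lambda,\alpha)=\mathrm{diag}(\lambda\alpha^{-1},\lambda\alpha):\lambda>0,\alpha\in\RR^\times\}$. Since the twisted cubic action is defined by $g\ast v=(\det g)^{-1}g^3v$, it is a genuine group action, and it is intertwined by $P$ in the sense that $v_{g\cdot F}=g\ast v_F$ for any $F$ whose Hessian is a positive multiple of the relevant form; I will simply exploit that $P$ conjugates the action for $Q_0$ to that for $Q$. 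So it suffices to identify the orbit structure of the $Q_0$-version of the twisted action on $V^{(Q_0)}_\RR$ and check that $P$ maps $V^{(Q_0)}_\RR$ bijectively onto $\VQ_\RR$.

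The key steps, in order: (1) Compute the adjoint $Q_0'(x,y)$ and note $\dfrac{Q_0'(x,y)}{r_0t_0}$ picks out a half-plane (indeed for $Q_0$ one gets, with $r_0=t_0$ appropriately normalized, the condition $xy>0$), so $V^{(Q_0)}_\RR$ is one of the open "quadrant-type" sets — a connected set on which the coordinates are both nonzero and of a fixed sign pattern. (2) Show that the $Q_0$-twisted action of $\GO^0_{Q_0}(\RR)$ on this set is transitive: under $g(\lambda,\alpha)$, the twisted cubic action sends $(x,y)$ to $\bigl(\lambda^2\alpha^{-4}x,\ \lambda^2\alpha^4 y\bigr)$ (up to the precise normalization of $\det g=\lambda^2$), and since $\lambda>0$ and $\alpha>0$ range freely, the pair of positive scalars $(\lambda^2\alpha^{-4},\lambda^2\alpha^4)$ ranges over all of $\RR_{>0}^2$; hence any point of $V^{(Q_0)}_\RR$ can be moved to any other. (3) Verify that $P$ carries $V^{(Q_0)}_\RR$ onto $\VQ_\RR$: the defining inequality $Q_0'(v)/(r_0t_0)>0$ is carried to $Q'(Pv)/(rt)>0$ because $P$ sends $Q_0$ to $Q$ and hence (up to a positive scalar, which is harmless in the sign condition) $Q_0'$ to $Q'$; this is a one-line check with the explicit $P=\sqrt t\begin{pmatrix}\theta_+&\theta_-\\1&1\end{pmatrix}$. (4) Conclude: $\VQ_\RR=P\cdot V^{(Q_0)}_\RR$ is a single orbit of $P\,\GO^0_{Q_0}(\RR)\,P^{-1}=\GO^0_Q(\RR)$ under $\ast$.

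The main obstacle is purely bookkeeping: getting the normalization constants right in three places at once — the factor of $\sqrt t$ in $P$, the factor $(\det g)^{-1}$ in the twisted action (so that $g\ast$ really is $g^3$ divided by $\det g=(\lambda\alpha^{-1})(\lambda\alpha)=\lambda^2$), and the scalar by which $P$ rescales $Q_0'$ relative to $Q'$ — and making sure the sign condition $Q'(b,c)/(rt)>0$ is exactly preserved rather than flipped. One should also double-check that $V^{(Q_0)}_\RR$ really is connected (a single "quadrant"), not a union of two opposite quadrants, since transitivity of a connected group action forces the orbit to be connected; this follows from $\GO^0_{Q_0}(\RR)$ having two connected components exchanged by $\alpha\mapsto-\alpha$, while the twisted cubic action $g^3$ kills that sign since $\alpha^3$ and $\alpha$ have the same sign — so in fact the identity component already acts transitively on $V^{(Q_0)}_\RR$. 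Once these normalizations are pinned down the argument is immediate.
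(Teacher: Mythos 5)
Your overall route is the same as the paper's: conjugate by $P$ to reduce to $Q_0=xy$, identify the image region, and check transitivity of the diagonal similitude group under the twisted cubic action. However, there is a genuine error at the step where you dispose of the quadrant issue. After conjugating, the relevant region is $\{(x,y):xy<0\}$ or $\{(x,y):xy>0\}$ (which one depends on $\sgn(r)$; note the literal condition $Q_0^\prime(x,y)/(r_0t_0)>0$ is meaningless since $r_0=t_0=0$, which is why the paper defines the sets $V^{(Q_0),\pm}_\RR$ directly by a sign condition). Either region is a union of two \emph{opposite open quadrants}, hence disconnected --- not ``a connected set \dots of a fixed sign pattern'' as you assert. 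Moreover your justification is backwards: precisely because $\alpha^3$ has the same sign as $\alpha$, the twisted cubic action does \emph{not} kill the sign of $\alpha$; the component with $\alpha<0$ acts by positive coordinate scalings composed with $-I$ (e.g.\ $g(1,-1)\ast v=-v$), and this is exactly what is needed to pass between the two opposite quadrants. For $\lambda,\alpha>0$ the action is $(x,y)\mapsto(\lambda\alpha^{-3}x,\lambda\alpha^{3}y)$ (your exponents $\lambda^2\alpha^{\pm4}$ are off, though harmlessly, since positive scalars are still exhausted), and such maps preserve each quadrant; so the $\alpha>0$ part of $\GO^0_{Q_0}(\RR)$ is \emph{not} transitive on $V^{(Q_0),\pm}_\RR$ --- it has two orbits there. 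As written, your argument proves transitivity only on a single quadrant and then appeals to a false connectedness claim to conclude.

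The repair is short and brings you exactly to the paper's proof: use the full group $\GO^0_{Q_0}(\RR)\cong\RR_{>0}\times\RR^\times$ (the lemma concerns this group, not its identity component), take the base point $v_0=(1,\mp1)\in V^{(Q_0),\pm}_\RR$, and solve $g(\lambda,\alpha)\ast v_0=(x,y)$ with $\lambda=\sqrt{\mp xy}>0$ and $\alpha$ a real sixth root of $\mp y/x$ whose sign is chosen to match $\sgn(x)$; the choice $\alpha<0$ reaches the quadrant your positive-scaling argument misses. The remaining items you flag --- that $P\cdot Q_0=Q$, that conjugation by $P$ intertwines the twisted actions, and the check that $P^{-1}$ carries $\VQ_\RR$ onto $V^{(Q_0),\sgn(r)}_\RR$ --- are indeed routine bookkeeping, exactly as in the paper.
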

\begin{proof}
	Note that for $v=(x,y)\in\RR^2$,
	\[
		P^{-1}\cdot\vect{x\\y}=\frac{1}{\sqrt{tD}}\vect{x-\theta_-y\\ -(x-\theta_+y)}.
	\]
	For each of $+$ and $-$, let
	\[
		V^{(Q_0),\pm}_\RR=\{(x,y)\in\RR^2:\pm Q^\prime_0(x,y)=\mp xy>0\}.
	\]
	Let $(x^\prime,y^\prime):=(x-\theta_-y,-(x-\theta_+y))$. Then,
	\begin{align*}
		v=(x,y)\in V_\RR\quad	&\text{if and only if}\quad -\frac{x^\prime y^\prime}{r}>0\\
						&\text{if and only if}\quad P^{-1}\cdot v\in V^{(Q_0),\sgn(r)}_\RR.
	\end{align*}
	The lemma thus comes down to showing that each of $V^{(Q_0),\pm}_\RR$ is a $\GO^0_{Q_0}(\RR)$-orbit.
	
	So, let $v_0=(1,\mp1)\in V^{(Q_0),\pm}_\RR$, then
	\[
		g(\lambda,\alpha)\ast v_0=g(\lambda^3,\alpha^3)v_0=(\lambda^3\alpha^{-3},\mp\lambda^3\alpha^3)\in V^{(Q_0),\pm}_\RR
	\]
	so that $V^{(Q_0),\pm}_\RR$ contains an orbit. On the other hand, if $(x,y)\in V^{(Q_0),\pm}_\RR$, then $(x,y)=g(\lambda,\alpha)\ast v_0$ for $\lambda=\sqrt{\mp xy}$ and $\alpha=\sqrt{\mp y/x}$.
\end{proof}
Using this lemma, and the equivariance of all related constructions, we may define the shape of an element $v\in\VQ_\RR$, as follows. There is some $v_1\in\VQ_\ZZ$ that corresponds to some complex cubic order $R_1$. There is then some $g\in\GO^0_Q(\RR)$ such that $g\ast v_1=v$. Define $\sh(v):=g\ast\sh(R_1)$. This gives a well-defined $\GO^0_Q(\RR)$-equivariant surjective map $\sh:\VQ_\RR\rightarrow[\gamma_Q]_1$. Note that for $g\in\GO^0_Q(\RR)$,
\begin{equation}\label{eqn:discactiononVQ}
	\Delta(g\ast(b,c))=\det(g)^2\Delta(b,c).
\end{equation}
For $v=\vect{x\\y}\in\VQ_\RR$, define its \textit{ratio} to be
\[
	\ratio(v):=\dfrac{x-\theta_-y}{x-\theta_+y}.
\]
Note that $\ratio(-v)=\ratio(v)$.
For $Pg(\lambda,\alpha)P^{-1}\in\GO^0_Q(\RR)$, define
\[
	\ratio(Pg(\lambda,\alpha)P^{-1}):=\alpha^{-2}.
\]
A calculation shows that
\[
	\ratio(Pg(\lambda,\alpha)P^{-1}\cdot v)=\ratio(Pg(\lambda,\alpha)P^{-1})\cdot\ratio(v),
\]
i.e.
\begin{equation}\label{eqn:ratio}
	\ratio(Pg(\lambda,\alpha)P^{-1}\ast v)=\ratio(Pg(\lambda,\alpha)P^{-1})^3\cdot\ratio(v).
\end{equation}
Therefore, every $\SO_Q(\ZZ)$ orbit in $\VQ_\ZZ$ contains a unique element $v$ such that
\[
	1\leq \ratio(\pm v)<\epsilon_0^6.
\]
Every such orbit thus contains a unique element such that
\[
	1\leq \ratio(v)< \epsilon_0^6\quad\text{and}\quad x-\theta_+y>0.
\]
By \eqref{eqn:discactiononVQ} and \eqref{eqn:ratio}, there is an element $\vQ=\vect{\xQ\\\yQ}\in\VQ_\RR$ such that $\Delta(\vQ)=\ratio(\vQ)=1$. After possibly acting by $g(1,-1)$, we may further assume that $\xQ-\theta_+\yQ>0$.

Let
\[
	\FQ_0:=\{g(\lambda,\alpha):\epsilon_0^{-1}<\alpha\leq1\}
\]
and
\[
	\FQ:=P\FQ_0P^{-1}.
\]
We leave the proof of the following straightforward result to the reader.
\begin{lemma}
	The set $\FQ_0$ is a fundamental domain for the action of $\langle\pm1,g(1,\epsilon_0)\rangle$ on $\GO^0_{Q_0}(\RR)$ and so $\FQ$ is a fundamental domain for the action of $\SO_Q(\ZZ)$ on $\GO^0_Q(\RR)$.
\end{lemma}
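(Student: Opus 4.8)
The plan is to reduce the whole statement to an elementary computation in the abelian group $\GO^0_{Q_0}(\RR)\cong\RR_{>0}\times\RR^\times$ and then transport the conclusion to $\GO^0_Q(\RR)$ by conjugating with $P$. First I would observe that, under the parametrization $(\lambda,\alpha)\mapsto g(\lambda,\alpha)$, the group law is coordinatewise multiplication, the subgroup $\langle\pm1,g(1,\epsilon_0)\rangle$ corresponds to $\{1\}\times\Lambda$ with $\Lambda:=\{\pm\epsilon_0^m:m\in\ZZ\}$ (using $-I=g(1,-1)$ and $g(1,\epsilon_0)^m=g(1,\epsilon_0^m)$), and $\FQ_0$ corresponds to $\RR_{>0}\times(\epsilon_0^{-1},1]$. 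Since $\GO^0_{Q_0}(\RR)$ is abelian, the indicated action of the subgroup on the group is just translation, so being a fundamental domain means exactly being a set of coset representatives.

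Then I would verify that $\RR_{>0}\times(\epsilon_0^{-1},1]$ is such a set of representatives for $(\{1\}\times\Lambda)$-translation. The $\RR_{>0}$-factor is unaffected by the action, so it suffices to show $(\epsilon_0^{-1},1]$ meets each coset of $\Lambda$ in $\RR^\times$ exactly once. Given $\alpha\in\RR^\times$, its coset is $\Lambda\alpha=\{\pm\epsilon_0^m|\alpha|:m\in\ZZ\}$; since $\epsilon_0>1$, the map $x\mapsto(\log x)/(\log\epsilon_0)$ identifies $(\epsilon_0^{-1},1]$ with $(-1,0]$, a fundamental domain for $\ZZ$ acting on $\RR$ by translation. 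Hence there is exactly one $m$ with $\epsilon_0^m|\alpha|\in(\epsilon_0^{-1},1]$, and because $(\epsilon_0^{-1},1]\subseteq\RR_{>0}$ the sign is forced to be $+$. This produces a unique representative, so $\FQ_0$ is a fundamental domain for $\langle\pm1,g(1,\epsilon_0)\rangle$ acting on $\GO^0_{Q_0}(\RR)$.

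Finally I would transport this along conjugation by $P$. The map $c_P\colon g\mapsto PgP^{-1}$ is a group isomorphism $\GO^0_{Q_0}(\RR)\xrightarrow{\sim}\GO^0_Q(\RR)$ carrying $\langle\pm1,g(1,\epsilon_0)\rangle$ onto $\SO_Q(\ZZ)$ (as recorded in \S\ref{sec:geodesics}) and carrying $\FQ_0$ onto $\FQ$ by definition. For any group isomorphism $\phi\colon G\xrightarrow{\sim}G'$, any subgroup $H\le G$, and any set $F$ of representatives for the orbits $Hg$ ($g\in G$), the image $\phi(F)$ is a set of representatives for the orbits $\phi(H)g'$ ($g'\in G'$), since $\phi$ maps $Hg$ bijectively onto $\phi(H)\phi(g)$. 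Applied with $\phi=c_P$, this gives that $\FQ$ is a fundamental domain for $\SO_Q(\ZZ)$ acting on $\GO^0_Q(\RR)$ (the action is translation, which is unambiguous because $\GO^0_Q(\RR)$, being conjugate to $\GO^0_{Q_0}(\RR)$, is abelian). I do not foresee any genuine difficulty here, and the authors rightly call the result straightforward; the only spot deserving a moment's care is the interaction of the sign $\pm$ in $\Lambda$ with the half-open endpoint of $(\epsilon_0^{-1},1]$: existence of a representative in that interval is immediate, and it is precisely the containment $(\epsilon_0^{-1},1]\subseteq\RR_{>0}$ that pins the sign down and thereby upgrades existence to uniqueness.
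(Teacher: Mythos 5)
Your proof is correct; the paper in fact gives no argument at all (it explicitly leaves this ``straightforward result to the reader''), and your reduction to coordinatewise translation in $\RR_{>0}\times\RR^\times$ followed by transport under conjugation by $P$ is exactly the intended elementary argument. The one point genuinely worth the care you give it --- that the positivity of the interval $(\epsilon_0^{-1},1]$ kills the sign ambiguity from $\pm1$ and yields uniqueness of representatives --- is handled correctly.
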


For $X>0$, let
\[
	\mc{R}_X:=\{v\in\FQ\ast\vQ:|\Delta(v)|<X\}
\]
and, for $W\subseteq[\gamma_Q]_1$ a $\mu_Q$-continuity set, let
\[
	\mc{R}_{X,W}:=\{v\in\FQ\ast\vQ:|\Delta(v)|<X,[\sh(v)]_1\in W\}.
\]
For a (Lebesgue) measurable subset $T\subseteq V_\RR$, let $\Vol(T)$ denote its Euclidean volume (i.e.\ its usual Lebesgue measure). As in \cite{Manjul-Piper}, our counts of cubic rings and fields will be estimated by counts of lattice points in $\mc{R}_{X,W}$ by relating these counts to the volume of $\mc{R}_{X,W}$. We will require the following lemma.
\begin{lemma}
	For $X\in\RR_{>0}$ and $W\subseteq[\gamma_Q]_1$ a $\mu_Q$-continuity set,
	\[
		\Vol(\mc{R}_{X,W})=\Vol(\mc{R}_{1,W})X^{1/2}.
	\]
\end{lemma}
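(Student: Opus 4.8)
The plan is to prove this by a direct scaling argument, exploiting the fact that homotheties of $\RR^2$ are realized by the twisted action $\ast$ of the scalar matrices in $\GO^0_Q(\RR)$. First I would record the behaviour of the three defining conditions of $\mc{R}_{X,W}$ under dilation. For $\delta>0$, the scalar matrix $\delta I$ lies in $\GO^0_Q(\RR)$ (it rescales $Q$ by $\delta^2>0$ and has positive determinant), and $\delta I\ast v=\frac{1}{\det(\delta I)}(\delta I)^3 v=\delta v$, so the $\ast$-action of $\delta I$ is simply multiplication by $\delta$. By \eqref{eqn:discactiononVQ}, $\Delta(\delta v)=\det(\delta I)^2\Delta(v)=\delta^4\Delta(v)$. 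Since scalar matrices act trivially on $\ol{\mf{H}}$ — both by fractional linear transformations and by the action $g\ast z=(g^{-1})^T\cdot z$, as $(\delta I)^{-1}$ is again scalar — the $\GO^0_Q(\RR)$-equivariance of the map $\sh\colon\VQ_\RR\to[\gamma_Q]_1$ gives $\sh(\delta v)=\delta I\ast\sh(v)=\sh(v)$, hence $[\sh(\delta v)]_1=[\sh(v)]_1$.

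Next I would check that $\FQ\ast\vQ$ is invariant under these dilations. Writing $g\in\FQ$ as $g=Pg(\lambda,\alpha)P^{-1}$ with $\epsilon_0^{-1}<\alpha\leq1$, centrality of $\delta I$ gives $(\delta I)\cdot g=P(\delta\, g(\lambda,\alpha))P^{-1}=Pg(\delta\lambda,\alpha)P^{-1}\in\FQ$, since $\alpha$ is unchanged and $\delta\lambda\in\RR_{>0}$; and because $\delta I$ is central, $(\delta I)\ast(g\ast\vQ)=((\delta I)\cdot g)\ast\vQ$, so $\delta(g\ast\vQ)\in\FQ\ast\vQ$. Thus multiplication by any $\delta>0$ is a bijection of $\FQ\ast\vQ$ onto itself. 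Combining this with the previous paragraph, taking $\delta=X^{1/4}$ shows that $v\mapsto X^{1/4}v$ maps $\mc{R}_{1,W}$ bijectively onto $\mc{R}_{X,W}$: it preserves membership in $\FQ\ast\vQ$, scales $|\Delta|$ by $X$ (so $|\Delta(v)|<1\iff|\Delta(X^{1/4}v)|<X$), and leaves $[\sh(\cdot)]_1$ fixed. Since a dilation of $\RR^2$ by $X^{1/4}$ multiplies Lebesgue measure by $(X^{1/4})^2=X^{1/2}$, this yields $\Vol(\mc{R}_{X,W})=\Vol(\mc{R}_{1,W})X^{1/2}$.

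There is no real obstacle here; the only points requiring care are (i) that all three conditions cutting out $\mc{R}_{X,W}$ transform as claimed, which follows from the equivariance of $\Delta$ (the displayed formula), of $\sh$ (its construction), and from the explicit ``radial'' shape of $\FQ$, and (ii) the measurability and finiteness of the volumes. For (ii), $W$ is a $\mu_Q$-continuity set hence measurable, $\{|\Delta|<X\}$ is open and $\sh$ is continuous, so $\mc{R}_{X,W}$ is measurable; and along the coordinate $\lambda$ one has $|\Delta(Pg(\lambda,\alpha)P^{-1}\ast\vQ)|=\lambda^4$ (using $\Delta(\vQ)=1$), which is bounded by $X$, so $\mc{R}_{X,W}$ is bounded and both sides of the asserted identity are finite. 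I would state these verifications briefly and leave the one-line computations to the reader.
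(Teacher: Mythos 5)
Your proof is correct and follows essentially the same route as the paper: the paper likewise observes that $g(X^{1/4},1)=X^{1/4}I$ scales the discriminant by $X$ via \eqref{eqn:discactiononVQ} and leaves the shape unchanged, so that $\mc{R}_{X,W}=g(X^{1/4},1)\ast\mc{R}_{1,W}$, and then scales the volume by $X^{1/2}$. You simply spell out a few more of the routine verifications (invariance of $\FQ\ast\vQ$ under central dilations, measurability) that the paper leaves implicit.
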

\begin{proof}
	By \eqref{eqn:discactiononVQ}, the element $Pg(\lambda,1)P^{-1}=g(\lambda,1)$ acting on $v\in\VQ_\RR$ scales the discriminant by $\lambda^4$. This matrix also does not affect the shape of $v$, so $\mc{R}_{X,W}=g(X^{1/4},1)\ast\mc{R}_{1,W}$. Since $\mc{R}_{X,W}$ is a nice (i.e.\ semi-algebraic) subset of a $\RR^2$, scaling it by $X^{1/4}$ scales its volume by $X^{1/2}$, as claimed.
\end{proof}

We also have the following lemma that we leave to the reader.
\begin{lemma}\label{lem:FQvQ}
	As a set
	\[
		\FQ\ast\vQ=\left\{v=\vect{x\\y}\in\VQ_\RR:1\leq\ratio(v)<\epsilon_0^6,\ x-\theta_+y>0\right\}.
	\]
\end{lemma}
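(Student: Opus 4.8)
The plan is to unwind the three ingredients that define $\FQ\ast\vQ$: the explicit description of $\FQ=P\FQ_0P^{-1}$ as a set of conjugated diagonal matrices, the ratio-multiplicativity \eqref{eqn:ratio}, and the normalization $\ratio(\vQ)=1$, $\xQ-\theta_+\yQ>0$. First I would observe that every element of $\FQ$ has the form $Pg(\lambda,\alpha)P^{-1}$ with $\lambda\in\RR_{>0}$ and $\epsilon_0^{-1}<\alpha\le1$, and that by the previous lemma these generic $(\lambda,\alpha)$-parameters sweep out $\GO^0_Q(\RR)$ modulo $\SO_Q(\ZZ)=P\langle\pm1,g(1,\epsilon_0)\rangle P^{-1}$. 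Acting on $\vQ$, the preceding lemma identifying $\VQ_\RR$ as a single $\GO^0_Q(\RR)$-orbit shows $\FQ\ast\vQ\subseteq\VQ_\RR$, and every $v\in\VQ_\RR$ is $g\ast\vQ$ for some $g$; the point is to pin down which $g$'s (equivalently, which $(\lambda,\alpha)$) land in the claimed region and to check the map is a bijection onto it.

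The computational heart is the ratio. By \eqref{eqn:ratio} and the definition $\ratio(Pg(\lambda,\alpha)P^{-1}):=\alpha^{-2}$, we get
\[
	\ratio\bigl(Pg(\lambda,\alpha)P^{-1}\ast\vQ\bigr)=\alpha^{-6}\cdot\ratio(\vQ)=\alpha^{-6},
\]
using $\ratio(\vQ)=1$. As $\alpha$ ranges over $(\epsilon_0^{-1},1]$, the quantity $\alpha^{-6}$ ranges over $[1,\epsilon_0^6)$, which is exactly the condition $1\le\ratio(v)<\epsilon_0^6$. So the ratio constraint matches precisely, and moreover $\alpha\mapsto\alpha^{-6}$ is a bijection from $(\epsilon_0^{-1},1]$ onto $[1,\epsilon_0^6)$, so for each admissible ratio value there is a unique $\alpha$. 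Next I would handle the sign condition $x-\theta_+y>0$: since $P^{-1}\cdot v=\tfrac{1}{\sqrt{tD}}(x-\theta_-y,\,-(x-\theta_+y))$ (as recorded in the proof that $\VQ_\RR$ is an orbit), the sign of $x-\theta_+y$ is governed by the sign of the second coordinate of $P^{-1}\cdot v$, and conjugation by a \emph{positive} diagonal matrix $g(\lambda^3,\alpha^3)$ (the cubed version appearing in the $\ast$-action) preserves each coordinate's sign; hence $x-\theta_+y$ has the same sign as $\xQ-\theta_+\yQ>0$, which we arranged. This shows $\FQ\ast\vQ$ is contained in the right-hand set.

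For the reverse inclusion, given $v=\binom{x}{y}\in\VQ_\RR$ with $1\le\ratio(v)<\epsilon_0^6$ and $x-\theta_+y>0$, the orbit lemma gives a unique (up to $\SO_Q(\ZZ)$) $g=Pg(\lambda,\alpha)P^{-1}$ with $g\ast\vQ=v$; the ratio computation above forces $\alpha^{-6}=\ratio(v)\in[1,\epsilon_0^6)$, hence $\alpha\in(\epsilon_0^{-1},1]$, so $g\in\FQ$; and the sign condition is automatically consistent by the previous paragraph rather than being an extra constraint (it rules out the $\pm1\in\SO_Q(\ZZ)$ ambiguity, which is why $\vQ$ was normalized with $g(1,-1)$). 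The parameter $\lambda>0$ is then uniquely determined and unrestricted, matching the fact that $\FQ_0$ imposes no condition on $\lambda$. The main obstacle, such as it is, is purely bookkeeping: keeping straight the three different but intertwined diagonal actions (linear conjugation, the cubed twisted action, and the induced action on ratios, related by the exponents $1$, $3$, and $-2$ respectively) so that the exponent $6=3\cdot2$ comes out correctly and the half-open interval endpoints land on the correct (closed/open) sides. No genuinely hard estimate is involved — hence the lemma's being left to the reader.
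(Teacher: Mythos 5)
Your proof is correct, and since the paper leaves this lemma to the reader there is no authorial proof to diverge from: your argument is the intended one, combining the relation \eqref{eqn:ratio} with the normalization $\ratio(\vQ)=1$ to match $\alpha\in(\epsilon_0^{-1},1]$ with $\ratio(v)\in[1,\epsilon_0^6)$, and using the positivity of the diagonal matrix $g(\lambda,\alpha^{3})$ together with $\xQ-\theta_+\yQ>0$ to control the sign of $x-\theta_+y$. The one phrase to tighten is in the reverse inclusion: $\alpha^{-6}=\ratio(v)$ only determines $|\alpha|$, and the condition $x-\theta_+y>0$ is exactly the (active, not automatic) constraint that forces $\alpha>0$ by excluding the factor $g(1,-1)=-I$ --- which is what your parenthetical about the $\pm1$ ambiguity in effect says --- while the claimed ``uniqueness up to $\SO_Q(\ZZ)$'' of $g$ is unnecessary and not quite what the orbit lemma gives.
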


\subsection{Ratio-of-volumes calculation}
In this section, we prove a crucial technical lemma that relates the Euclidean volume $\Vol(\mc{R}_{1,W})$ to the hyperbolic measure $\mu_Q(W)$ for any $\mu_Q$-continuity set in $[\gamma_Q]_1$. The key idea, following \cite[\S6]{Manjul-Piper}, is to pass through the group $\GO^0_Q(\RR)$ which is linked to both.

\begin{lemma}\label{lem:RofV}
	For any $\mu_Q$-continuity set $W$ in $[\gamma_Q]_1$, we have that
	\[
		\dfrac{\Vol(\mc{R}_{1,W})}{\Vol(\mc{R}_1)}=\dfrac{\mu_Q(W)}{\mu_Q([\gamma_Q]_1)}.
	\]
\end{lemma}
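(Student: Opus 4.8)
The plan is to compute both sides by transferring all quantities to the group $\GO^0_Q(\RR)$, exactly as in the ratio-of-volumes argument of \cite[\S6]{Manjul-Piper}. The point is that $\VQ_\RR$ is a single $\GO^0_Q(\RR)$-orbit (by the lemma above), and $[\gamma_Q]_1$ is a quotient of $\GO^0_Q(\RR)$ by $\SO_Q(\ZZ)$ through the flow; so one should disintegrate the Euclidean measure on $\VQ_\RR$ along the orbit map $g\mapsto g\ast\vQ$ and compare it to the pushforward of Haar measure on $\GO^0_Q(\RR)$.

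First I would set up coordinates on $\GO^0_Q(\RR)=P\GO^0_{Q_0}(\RR)P^{-1}$ via $(\lambda,\alpha)\mapsto Pg(\lambda,\alpha)P^{-1}$, with Haar measure $d^\times\lambda\,d^\times\alpha$ (up to a constant). Under the twisted action, $Pg(\lambda,\alpha)P^{-1}\ast\vQ$ has discriminant $\lambda^4$ (by \eqref{eqn:discactiononVQ}, since $\det Pg(\lambda,\alpha)P^{-1}=\det g(\lambda,\alpha)=\lambda^2$) and shape depending only on $\alpha$ through $\pi_Q$. So the condition $|\Delta(v)|<1$ becomes $\lambda<1$, and the condition $[\sh(v)]_1\in W$ becomes $\alpha\in\pi_Q^{-1}(W)$ up to the $\SO_Q(\ZZ)$-identification; restricting to the fundamental domain $\FQ_0$ means $\epsilon_0^{-1}<\alpha\le1$. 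I would then compute the Jacobian of the orbit map $\RR_{>0}\times(\epsilon_0^{-1},1]\ni(\lambda,\alpha)\mapsto Pg(\lambda,\alpha)P^{-1}\ast\vQ\in V_\RR\cong\RR^2$: writing $v=(x,y)$, the pair $(\ratio(v),\Delta(v))$ is a convenient coordinate system on $\FQ\ast\vQ$ (by Lemma~\ref{lem:FQvQ} and \eqref{eqn:ratio}), with $\ratio(v)=\alpha^{-6}$ and $\Delta(v)=\lambda^4$; pulling back $dx\,dy$ through these gives a density that factors as (something in $\lambda$)$\cdot$(something in $\alpha$). The key computational fact is that the $\alpha$-part of this density is, up to the overall constant, exactly $d^\times\alpha=\frac{d\alpha}{\alpha}$ — i.e.\ it matches the hyperbolic measure $d\mu_{Q_0}$ on $[\gamma_{Q_0}]_1$ after transfer by $P$ via \eqref{eqn:transfer_integral}.

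Granting that, both $\Vol(\mc{R}_{1,W})$ and $\mu_Q(W)$ become, up to a common positive constant (and the common $\lambda$-integral $\int_0^1(\cdots)\,d^\times\lambda$, which is finite and cancels), equal to $\frac{2}{n_Q}\int_{\mc{T}_Q,\ \pi_Q(g_\alpha)\in W}d^\times\alpha$ — the former because $\mc{R}_{1,W}=(\FQ\ast\vQ)\cap\{|\Delta|<1,[\sh]_1\in W\}$ and the latter by definition \eqref{eqn:transfer_integral}. Taking $W=[\gamma_Q]_1$ gives the denominators, and dividing yields the claimed equality. The same disintegration shows $\Vol(\mc{R}_1)<\infty$ (it is here that non-pureness, equivalently finite geodesic length / $\epsilon_0$ finite, matters, so the $\alpha$-integral over $(\epsilon_0^{-1},1]$ is finite), so the ratio is well-defined.

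The main obstacle is the explicit Jacobian computation: one must verify that the Euclidean area form $dx\,dy$ on $\VQ_\RR$, expressed in the $(\lambda,\alpha)$ coordinates coming from $P g(\lambda,\alpha)P^{-1}\ast\vQ$, really does separate variables with $\alpha$-part proportional to $d^\times\alpha$. Concretely I would push $dx\,dy$ through $P^{-1}$ to land in the $Q_0$-picture where $v_0'=(x',y')$ with $x'y'=\mp Q_0'(x',y')$, there the twisted action $g(\lambda,\alpha)\ast(x_0',y_0')=(\lambda^3\alpha^{-3}x_0',\lambda^3\alpha^3 y_0')$ makes the Jacobian transparent: $dx'\wedge dy'$ pulls back to (const)$\cdot\lambda^{5}\,d^\times\lambda\wedge d^\times(\alpha^3)=$(const)$\cdot\lambda^5 d^\times\lambda\wedge 3\,d^\times\alpha$, which indeed separates, and $P^{-1}$ only contributes the constant factor $\det(P^{-1})^{?}$ (a nonzero constant depending on $r,t,D$). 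Once this separation is in hand, everything else is bookkeeping with the already-established transfer formula \eqref{eqn:transfer_integral} and the covering map $\pi_Q$.
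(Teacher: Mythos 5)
Your proposal is correct and follows essentially the same route as the paper's proof: disintegrate over $\GO^0_Q(\RR)\cong\RR_{>0}\times\RR^\times$ via $g(\lambda,\alpha)\mapsto Pg(\lambda,\alpha)P^{-1}\ast\vQ$, observe that the Jacobian separates with $\alpha$-part proportional to $d^\times\alpha$ (the paper computes the density as $6\xQ\yQ\,\lambda^2\,d^\times\lambda\,d^\times\alpha$, i.e.\ $J(g)=\det g$), cancel the common $\lambda$-integral, and identify the remaining $\alpha$-integral with $\mu_Q$ via $\pi_Q$ and \eqref{eqn:transfer_integral}, the basepoint shift by $\alpha_0$ being absorbed by translation invariance of $d^\times\alpha$. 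The only slip is in your Jacobian aside, where you scale by $\lambda^3$ (using $g^3v$ rather than the twisted action $\tfrac{1}{\det g}\,g^3v$, which scales $v$ by $\lambda$), inconsistent with your own correct statement $\Delta=\lambda^4$; this is harmless here because the $\lambda$-dependence cancels between numerator and denominator.
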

\begin{proof}
	Let $\chi_{\mc{R}_{1,W}}$ denote the characteristic function of $\mc{R}_{1,W}$. Then,
	\begin{align*}
		\dfrac{\Vol(\mc{R}_{1,W})}{\Vol(\mc{R}_1)}&=\dfrac{\ds\int_{g\in\FQ}\chi_{\mc{R}_{1,W}}(g\ast\vQ)J(g)dg}{\ds\int_{g\in\FQ}\chi_{\mc{R}_{1}}(g\ast\vQ)J(g)dg},
	\end{align*}
	where $J(g)$ is an appropriate function accounting for the Jacobian determinant of the map $\FQ\rightarrow\mc{R}_\infty$, and $dg$ denotes the Haar measure on $\GO^0_Q(\RR)$. Recall that
	\[
		\GO^0_Q(\RR)=P\{g(\lambda,\alpha):\lambda\in\RR_{>0},\alpha\in\RR^\times\}P^{-1}\cong\RR_{>0}\times\RR^\times,
	\]
	so that $dg=d^\times\lambda d^\times\alpha$. The Jacobian of multiplication by a (constant) matrix is the matrix itself, the Jacobian of an inverse function is the inverse of the Jacobian of the function, and the Jacobian of a composition is the product of the Jacobians, so that the Jacobian determinants of $P$ and $P^{-1}$ cancel. To determine $J(g)$, it thus suffices to determine $\left|\dfrac{\partial(x,y)}{\partial(\lambda,\alpha)}\right|$, where
	\begin{align*}
		x&=\lambda\alpha^{-3}\xQ\\
		y&=\lambda\alpha^3\yQ.
	\end{align*}
	A straightforward calculation shows that
	\[
		\left|\dfrac{\partial(x,y)}{\partial(\lambda,\alpha)}\right|=6\xQ\yQ\frac{\lambda}{\alpha},
	\]
	which implies that $J(g)=\det(g)$. With this is hand, we have that
	\begin{align*}
		\dfrac{\Vol(\mc{R}_{1,W})}{\Vol(\mc{R}_1)}	&=\dfrac{\ds\int_{g(\lambda,\alpha)\in\FQ_0}\chi_{\mc{R}_{1,W}}(Pg(\lambda,\alpha)P^{-1}\ast\vQ)\lambda^2d^\times\lambda d^\times\alpha}{\ds\int_{g(\lambda,\alpha)\in\FQ_0}\chi_{\mc{R}_{1}}(Pg(\lambda,\alpha)P^{-1}\ast\vQ)\lambda^2d^\times\lambda d^\times\alpha}.
	\end{align*}
	By \eqref{eqn:discactiononVQ}, $\Delta(Pg(\lambda,\alpha)P^{-1}\ast\vQ)=\lambda^4\Delta(\vQ)=\lambda^4$. The characteristic functions therefore impose the condition $0<\lambda\leq1$. We also know that the $\lambda$ factor does not affect the shape, so
	\begin{align*}
		\dfrac{\Vol(\mc{R}_{1,W})}{\Vol(\mc{R}_1)}		&=\dfrac{\ds\int_0^1\lambda^2d^\times\lambda\int_{\epsilon_0^{-1}}^1\chi_{\mc{R}_{1,W}}(Pg(1,\alpha)P^{-1}\ast\vQ)d^\times\alpha}{\ds\int_0^1\lambda^2d^\times\lambda\int_{\epsilon_0^{-1}}^1\chi_{\mc{R}_{1}}(Pg(1,\alpha)P^{-1}\ast\vQ)d^\times\alpha}\\
											&=\dfrac{\ds\int_{\epsilon_0^{-1}}^1\chi_{\mc{R}_{1,W}}(Pg(1,\alpha)P^{-1}\ast\vQ)d^\times\alpha}{\ds\int_{\epsilon_0^{-1}}^1\chi_{\mc{R}_{1}}(Pg(1,\alpha)P^{-1}\ast\vQ)d^\times\alpha}.
	\end{align*}
	By Lemma~\ref{lem:FQvQ}, the value of $\chi_{\mc{R}_1}$ is constant on the region of integration here.
	On the other hand, the value of ${\chi_{\mc{R}_{1,W}}(Pg(1,\alpha)P^{-1}\ast\vQ)}$ simply depends on whether the shape of $Pg(1,\alpha)P^{-1}\ast\vQ$ is in $W$.
	The shape of $\vQ$ is some element $\zQ\in[\gamma_Q]_1$. There is therefore some $\alpha_0$ such that ${\zQ=Pg(1,\alpha_0)\ast i}$. Let $\wt{W}\subseteq\mc{T}_Q$ be the inverse of image of $W$ under the map $\pi_Q$ from \eqref{eqn:piQ}. Since
	\[
		[\sh(Pg(1,\alpha)P^{-1}\ast\vQ)]_1=\pi_Q(g(1,\alpha)g(1,\alpha_0)),
	\]
	we have that
	\begin{align*}
		[\sh(Pg(1,\alpha)P^{-1}\ast\vQ)]_1\in W	&\text{ if and only if }g(1,\alpha)g(1,\alpha_0)\in \wt{W}\\
										&\text{ if and only if }g(1,\alpha)\in g(1,\alpha_0^{-1})\wt{W}
	\end{align*}
	(where we are identifying $\mc{T}_Q$ with $\mc{T}/\langle g(1,\epsilon_0^{-1})\rangle$). We now have that
	\begin{align*}
		\dfrac{\Vol(\mc{R}_{1,W})}{\Vol(\mc{R}_1)}		&=\dfrac{\ds\int_{\epsilon_0^{-1}}^1\chi_{g(1,\alpha_0^{-1})\wt{W}}(g(1,\alpha))d^\times\alpha}{\ds\int_{\epsilon_0^{-1}}^1d^\times\alpha}.
	\end{align*}
	The measure on $\mc{T}$ is invariant under multiplication, so we may replace $\chi_{g(1,\alpha_0^{-1})\wt{W}}$ with $\chi_{\wt{W}}$. Note that $\chi_{\wt{W}}(g(1,\alpha))=\chi_W(\pi_Q(g(1,\alpha))$. Using \eqref{eqn:transfer_integral}, we obtain
	\begin{align*}
		\dfrac{\Vol(\mc{R}_{1,W})}{\Vol(\mc{R}_1)}		&=\dfrac{\frac{2}{n_Q}\ds\int_{[\gamma_Q]_1}\chi_{W}(z)d\mu_Q(z)}{\frac{2}{n_Q}\ds\int_{[\gamma_Q]_1}d\mu_Q(z)}\\
											&=\dfrac{\mu_Q(W)}{\mu_Q([\gamma_Q]_1)},
	\end{align*}
	as desired.
\end{proof}

\subsection{Equidistribution for oriented complex cubic orders}
The next step is to count the number of oriented complex cubic orders of bounded discriminant and shape in some region by relating it to the volume of $\mc{R}_{X,W}$. Together with Lemma~\ref{lem:RofV}, this will prove equidistribution for shapes of (oriented) complex cubic orders. The result for fields will then follow, in the next section, from a sieve.

For an $\SO_Q(\ZZ)$-stable subset $S$ of $\VQ_\ZZ$ and $X>0$, let
\[
	N^\Or(S; X):=\#\{[v]_1\in\SO_Q(\ZZ)\backslash S:v\text{ is irreducible, }|\Delta(v)|<X\}
\]
and for $W\subseteq[\gamma_Q]_1$ a $\mu_Q$-continuity set, let
\[
	N^\Or(S; X,W):=\#\{[v]_1\in \SO_Q(\ZZ)\backslash S:v\text{ is irreducible, }|\Delta(v)|<X,[\sh(v)]_1\in W\}.
\]
For a subset $S\subseteq\ZZ^2\subseteq V_\RR$, let $\mu_p(S)$ denote its $p$-adic density, i.e.\ the measure of its closure in $\ZZ_p$ (where $\ZZ_p$ is given its usual Haar probability measure).
In \cite[p.~74]{Manjul-Ari}, Bhargava and Shnidman prove\footnote{This proof is in some sense implicit in their discussion and is given as an analogue of their Lemma~24 and Theorem~25.} the analogue of \cite[Theorem~8]{Manjul-Piper}, namely the following.
\begin{theorem}[\cite{Manjul-Ari}]
	If $S\subseteq\VQ_\ZZ$ is any $\SO_Q(\ZZ)$-stable subset defined by finitely many congruence conditions, then
	\begin{align*}
		N^{\Or}(S;X)&=\prod_p\mu_p(S)\cdot\Vol(\mc{R}_X)+O(X^{1/4})\\
					&=\prod_p\mu_p(S)\cdot\Vol(\mc{R}_1)X^{1/2}+O(X^{1/4}).\\
	\end{align*}
\end{theorem}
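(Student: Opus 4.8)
The plan is to run the Davenport-style lattice-point count underlying all results of this type, which here is precisely the argument of Bhargava and Shnidman \cite[\S5]{Manjul-Ari} (their Lemma~24 and Theorem~25) recast in the $\GO^0_Q(\RR)$-language developed above, and which is the analogue in this setting of \cite[Theorem~8]{Manjul-Piper}. First I would turn the orbit count into a lattice-point count: by Lemma~\ref{lem:FQvQ} together with the uniqueness statement preceding it, $\FQ\ast\vQ$ is a fundamental domain for $\SO_Q(\ZZ)$ acting on $\VQ_\RR$, each orbit meeting it in exactly one point; since both $\Delta(v)$ and the property of being irreducible are invariant under $\SO_Q(\ZZ)$ (which consists of determinant-one matrices), this yields
\[
	N^{\Or}(S;X)=\#\{v\in S\cap\mc{R}_X:v\text{ irreducible}\},
\]
using that $S\subseteq\VQ_\ZZ\subseteq V_\ZZ$ and $\mc{R}_X\subseteq\FQ\ast\vQ$.

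Next I would count \emph{all} integer points of $\mc{R}_X$ in a fixed congruence class. By the definition of the twisted action $\ast$, the scalar matrix $g(X^{1/4},1)=X^{1/4}I$ acts on $\VQ_\RR$ by ordinary dilation $v\mapsto X^{1/4}v$, so $\mc{R}_X=X^{1/4}\cdot\mc{R}_1$ as subsets of $\RR^2$. The region $\mc{R}_1$ is bounded: in the linear coordinates $(u,w)=(x-\theta_+y,\,x-\theta_-y)$ one has $Q^\prime(b,c)=tuw$, so the constraints $1\leq\ratio(v)<\epsilon_0^6$ and $|\Delta(v)|<1$ confine $(u,w)$, hence $(x,y)$, to a curvilinear triangle; its boundary is a finite union of real-analytic arcs (segments of the lines $\ratio(v)=1$ and $\ratio(v)=\epsilon_0^6$, and an arc of the conic $|\Delta(v)|=1$). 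The standard Lipschitz-principle lattice-point estimate then gives, for $S$ periodic modulo $m$,
\[
	\#\{v\in S\cap\mc{R}_X\}=\frac{\#(S\bmod m)}{m^2}\Vol(\mc{R}_X)+O(X^{1/4})=\prod_p\mu_p(S)\cdot\Vol(\mc{R}_1)X^{1/2}+O(X^{1/4}),
\]
since $\Vol(\mc{R}_X)=X^{1/2}\Vol(\mc{R}_1)$ and $\#(S\bmod m)/m^2=\prod_p\mu_p(S)$ is a finite product (only finitely many primes enter the defining congruences).

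There remains the removal of the reducible integer points. An element $v\in\VQ_\ZZ$ is reducible exactly when its associated binary cubic form $F$ --- whose coefficients are $\RR$-linear in $(b,c)$ under the Bhargava--Shnidman parametrization --- has a rational linear factor; for each fixed factor the corresponding locus is a line in the $(b,c)$-plane, and a Davenport-type sieve, namely the one carried out in \cite[\S5]{Manjul-Ari} and descending from the classical Davenport--Heilbronn method, bounds the number of reducible integer points of $\mc{R}_X$ with $|\Delta(v)|<X$ by $O(X^{1/4})$, so that they too are absorbed into the error. Combining the three steps gives both displayed asymptotics.

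The step I expect to be the main obstacle, and the only one needing genuine input from \cite{Manjul-Ari} rather than routine geometry of numbers, is this reducibility estimate: one must verify that the reducible contribution neither swamps the $\sqrt{X}$ main term nor exceeds the claimed $O(X^{1/4})$ error. Everything else --- the fundamental-domain identification, the dilation structure of $\mc{R}_X$, and the sieve over congruence classes --- is bookkeeping with structures already set up in this section.
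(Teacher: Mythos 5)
Your proposal is correct and follows essentially the same route as the paper, which does not reprove this statement but defers to Bhargava--Shnidman: the footnoted remark that the result is implicit in \cite[\S5]{Manjul-Ari} as the analogue of their Lemma~24 and Theorem~25 refers exactly to the argument you sketch (unique orbit representatives in $\FQ\ast\vQ$ via Lemma~\ref{lem:FQvQ}, the dilation $\mc{R}_X=X^{1/4}\mc{R}_1$ with bounded $\mc{R}_1$, the Lipschitz/Davenport count with $O(X^{1/4})$ boundary error, finitely many congruence conditions giving the product of $p$-adic densities, and the $O(X^{1/4})$ bound on reducible points). You also correctly isolate the reducibility estimate as the one step requiring genuine input from \cite{Manjul-Ari} rather than routine geometry of numbers, which matches how the paper treats it (see the proof of Lemma~\ref{lem:bounded}).
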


The volume calculation starting on p.~73 of \cite{Manjul-Ari} shows that
\[
	\Vol(\mc{R}_X)=\dfrac{3\sqrt{3}rt\log\epsilon_0}{D}X^{1/2}
\]
and \cite[Lemma~26]{Manjul-Ari} shows that
\[
	\prod_p\mu_p(\VQ_\ZZ)=\dfrac{1}{3^{\alpha_D}rt},
\]
where
\[
	\alpha_D:=	\begin{cases}
					1&3\mid D\\
					2&3\nmid D.
				\end{cases}
\]
Define
\begin{equation}
	C_D:=\dfrac{3\sqrt{3}\log\epsilon_0}{3^{\alpha_D}D}.
\end{equation}
We therefore have the following consequence which is \cite[Theorem~29]{Manjul-Ari}: the number of oriented complex cubic orders (up to isomorphism) whose primitive trace-zero form is $\SO_Q(\ZZ)$-equivalent to $Q$ is
\[
	N^{\Or}(\VQ_\ZZ;X)=C_DX^{1/2}+O(X^{1/4}).
\]

To obtain an analogue of this result for cubic orders whose shape lies in some $W$, we will use what we call the ``$W$--$\ol{W}$ trick'' from \cite[\S3]{Manjul-Piper}. We will require the following lemma.
\begin{lemma}\label{lem:bounded}
	Let $H$ be a bounded measurable subset of $\VQ_\RR$ and let $S\subseteq\VQ_\ZZ$ be given by finitely many congruence conditions. Then, for any $z\in\RR_{>0}$, the number of irreducible lattice points in $S\cap zH$ is
	\[
		\left(\prod_{p}\mu_p(S)\right)\Vol(zH)+O(z).
	\]
\end{lemma}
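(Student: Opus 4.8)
The plan is to strip away the irreducibility and the congruence constraints one at a time, reducing to the classical count of lattice points in an expanding planar region.

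\textbf{Reducible points are negligible.} A point $v=(b,c)\in S\cap zH$ is reducible precisely when the associated binary cubic form $F_v$ --- whose outer coefficients are $a=(sb-rc)/3t$ and $d=(sc-tb)/3r$ by Theorem~21 of \cite{Manjul-Ari} --- has a root $[p:q]\in\PP^1(\QQ)$. For fixed $[p:q]$ the condition ``$[p:q]$ is a root of $F_v$'' is a \emph{nontrivial} linear equation in $(b,c)$ (nontrivial since not every $F_v$ is reducible, $v$ being an $S_3$-order for generic $v$), so it cuts out a line $L_{[p:q]}\subseteq V_\RR$ through the origin, and a nonzero $v$ lies on at most three of these lines. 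Since $H$ is bounded, $v\in zH$ forces $b,c=O(z)$, hence $a,d=O(z)$, and the rational root theorem forces any root $[p:q]$ in lowest terms to satisfy $|p|,|q|=O(z)$; each line $L_{[p:q]}$ that meets $zH$ in a nonzero lattice point meets it in $O(z/\ell_{[p:q]})$ lattice points, where $\ell_{[p:q]}$ is the length of a primitive generator of the rank-one lattice $\ZZ^2\cap L_{[p:q]}$. Summing over the finitely many relevant $[p:q]$ --- the one genuine input being a lower bound for $\ell_{[p:q]}$ in terms of the height of $[p:q]$, equivalently a uniform content bound for the cubic form obtained by the substitution $[p:q]$, which is exactly the estimate underlying the error term $O(X^{1/4})$ in the count $N^\Or(\VQ_\ZZ;X)$ of \cite[Theorem~29]{Manjul-Ari} --- shows that $S\cap zH$ contains $O(z)$ reducible points. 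It thus remains to count \emph{all} lattice points of $S\cap zH$.

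\textbf{Congruences and the lattice-point count.} Since $S\subseteq\VQ_\ZZ$ is defined by finitely many congruences, it is a disjoint union of $\nu$ residue classes modulo some fixed integer $N$ (take $N$ to be a common modulus for all defining congruences, including the congruences modulo $3r$ and $3t$ built into $\VQ_\ZZ$). For a class $c+N\ZZ^2$ the change of variable $v=Nw+c$ identifies $\{v\in c+N\ZZ^2:v\in zH\}$ with $\ZZ^2\cap N^{-1}(zH-c)$. For the latter I invoke the elementary estimate $\#(\ZZ^2\cap R)=\Vol(R)+O\!\left(1+\operatorname{length}\partial R\right)$, valid for any bounded $R\subseteq\RR^2$ with rectifiable boundary (proved by attaching to each lattice point of $R$ the unit square it anchors and bounding the symmetric difference with $R$ by a tubular neighbourhood of $\partial R$); this requires $\partial H$ to be rectifiable, which holds for the semi-algebraic regions $H$ arising in \S\ref{sec:equidistribution}. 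Applied to $R=N^{-1}(zH-c)$, of area $N^{-2}\Vol(zH)$ and boundary length $O(z/N)=O(z)$ (with $N$ fixed), this yields $\#(\ZZ^2\cap N^{-1}(zH-c))=N^{-2}\Vol(zH)+O(z)$. Summing over the $\nu$ classes comprising $S$ gives $\#(S\cap zH)=(\nu/N^2)\Vol(zH)+O(z)$, and by the Chinese Remainder Theorem $\nu/N^2=\#(S\bmod N)/N^2=\prod_{p\mid N}\mu_p(S)=\prod_p\mu_p(S)$, since $\mu_p(S)=1$ for $p\nmid N$. Combined with the first step, this is the assertion of the lemma.

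\textbf{Main obstacle.} Everything except the reducibility bound is routine geometry-of-numbers bookkeeping; the single point requiring care is ruling out the possibility that many lattice points of $zH$ pile onto the rational lines $L_{[p:q]}$, which reduces to the uniform lower bound on $\ell_{[p:q]}$ mentioned above and is handled as in \cite{Manjul-Ari}. (One should also read the hypothesis on $H$ as including rectifiability of $\partial H$, so that the error is genuinely $O(z)$ rather than merely $o(z^2)$; this costs nothing in the intended applications.)
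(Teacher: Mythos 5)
Your decomposition --- (i) reducible lattice points in $zH$ are $O(z)$, (ii) the full congruence-constrained count is $\prod_p\mu_p(S)\Vol(zH)+O(z)$ --- matches the paper's, and your part (ii) is precisely the standard argument the paper outsources to \cite[Lemmas 6 and 9]{Manjul-Piper}: split $S$ into residue classes modulo a common modulus $N$, apply the area-plus-perimeter lattice count to each, and recombine via CRT. Your parenthetical caveat is also right: ``bounded measurable'' alone cannot yield an $O(z)$ error (a set with bad boundary can have positive area and no lattice points in any dilate), and the paper silently assumes the same niceness, which is harmless since in the application $H$ is a subset of the semialgebraic region $\mc{R}_{1,W}$ that one is free to take to be a finite union of nice sets.

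Where you genuinely diverge is the reducible count, and that is also where the one real gap sits. The paper does not count points on rational lines: it quotes Bhargava--Shnidman's bound that the number of $\SL_2(\ZZ)$-classes of \emph{reducible} binary cubic forms with Hessian a positive multiple of $Q$ and $|\Delta|<X$ is $O(X^{1/4})$, and combines it with the degree-$4$ homogeneity of $\Delta$ on $\VQ_\RR$ (so points of $zH$ have $|\Delta|=O(z^4)$, hence lie in $O(z)$ classes, each meeting $zH$ boundedly often because in the application $H\subseteq\FQ\ast\vQ$ sits in a fundamental domain). Your line-counting argument is more self-contained and does not need $H$ to lie in a fundamental domain, but it stands or falls on the lower bound $\ell_{[p:q]}\gg H([p:q])^3$, without which $\sum_{[p:q]}z/\ell_{[p:q]}$ need not be $O(z)$ (there are up to $O(z^2)$ candidate roots of height $O(z)$, so you really need $\sum 1/\ell$ to converge). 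You assert this bound is ``exactly the estimate underlying the $O(X^{1/4})$'' in \cite{Manjul-Ari}; that cannot be read off from the statement they prove, and their reducibility count does not proceed via these lines, so as written this step is unproven. It is, however, true and not hard: clearing denominators, write the equation of $L_{[p:q]}$ as $A(p,q)\,b+B(p,q)\,c=0$ with $A,B$ integral binary cubics in $(p,q)$; then $\ell_{[p:q]}\asymp\max(|A(p,q)|,|B(p,q)|)/\gcd(A(p,q),B(p,q))$, the numerator is $\gg H([p:q])^3$ provided $\mathrm{Res}(A,B)\neq0$, and for coprime $(p,q)$ the gcd divides $\mathrm{Res}(A,B)$. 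The resultant is nonzero because a common root of $A$ and $B$ in $\PP^1(\overline{\QQ})$ would be a root of \emph{every} cubic form with Hessian proportional to $Q$, forcing each such form to be divisible over $\QQ$ by the corresponding minimal-polynomial form of degree $1$ or $2$ (degree $3$ would make the two-dimensional family proportional), i.e.\ reducible --- contradicting the existence of an irreducible $v\in\VQ_\ZZ$. Either supply this resultant argument or replace the whole step by the paper's citation of the reducible-class count plus homogeneity of the discriminant.
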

\begin{proof}
As in \cite[Lemma~6 and Lemma~9]{Manjul-Piper}, what is required is to show that the number of \textit{reducible} lattices points in $S\cap zH$ is negligible. Bhargava--Shnidman prove that the number of $\SL_2(\ZZ)$-equivalence classes of binary cubic forms $F$ whose Hessian is a (positive) integer multiple of $Q$ with $|\Delta(F)|<X$ is $O(X^{1/4})$.\footnote{This is the analogue of \cite[Lemma~24]{Manjul-Ari} mentioned on page 74 of \textit{ibid.}} Since the discriminant of $(b,c)\in\VQ_\RR$ is homogeneous of degree $4$ in $b$ and $c$, for any bounded, measurable subset $H$ in $V_\RR$, the number of irreducible lattice points in $zH$ is $\Vol(zH)+O(z)$, as claimed.
\end{proof}

\begin{proposition}\label{prop:OrSWcount}
	Let $W\subseteq[\gamma_Q]_1$ be a $\mu_Q$-continuity set. Suppose that $S\subseteq\VQ_\ZZ$ is given by finitely many congruence conditions, then
	\[
		N^\Or(S;X,W)=\left(\prod_{p}\mu_p(S)\right)\cdot\Vol(\mc{R}_{1,W})X^{1/2}+O(X^{1/4}).
	\]
\end{proposition}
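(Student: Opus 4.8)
The plan is to translate $N^{\Or}(S;X,W)$ into a count of irreducible lattice points in a bounded subset of $V_\RR$ and then to estimate that count by a volume using Lemma~\ref{lem:bounded}. First, since $\GO^0_Q(\RR)\cong\RR_{>0}\times\RR^\times$ is abelian, the twisted action $\ast$ on the single orbit $\VQ_\RR$ is free, so together with the lemma above that $\FQ$ is a fundamental domain for $\SO_Q(\ZZ)$ acting on $\GO^0_Q(\RR)$ and the compatibility $\gamma\ast(g\ast\vQ)=(\gamma g)\ast\vQ$, the set $\FQ\ast\vQ$ is a fundamental domain for $\SO_Q(\ZZ)$ acting on $\VQ_\RR$. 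Using that $\Delta$ is $\SO_Q(\ZZ)$-invariant, that $\sh$ is $\ast$-equivariant and agrees on $\VQ_\ZZ$ with the shape of the associated cubic order (via the $\gl_2(\RR)$-equivariance of $F\mapsto M_F^\perp$ from \S\ref{sec:DF}), and that the congruence conditions defining $S$ are preserved, we get
\[
	N^{\Or}(S;X,W)=\#\{v\in S\cap\mc{R}_{X,W}:v\text{ irreducible}\}+O(X^{1/4}),
\]
the $O(X^{1/4})$ absorbing the irreducible lattice points of discriminant $<X$ lying on the boundary of the fundamental domain $\FQ\ast\vQ$, which is contained in a finite union of lines through the origin.

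Next, the scaling identity $\mc{R}_{X,W}=g(X^{1/4},1)\ast\mc{R}_{1,W}=X^{1/4}\cdot\mc{R}_{1,W}$ (already used to prove $\Vol(\mc{R}_{X,W})=\Vol(\mc{R}_{1,W})X^{1/2}$) reduces us to a fixed dilate of $\mc{R}_{1,W}$, and by Lemma~\ref{lem:FQvQ} together with the bound $|\Delta(v)|<1$ — which forces $|Q^\prime(v)|$ bounded while $\ratio(v)\in[1,\epsilon_0^6)$, hence $v$ bounded — the set $\mc{R}_{1,W}$ is a bounded measurable subset of $\VQ_\RR$. Applying Lemma~\ref{lem:bounded} with $H=\mc{R}_{1,W}$ and $z=X^{1/4}$ then gives
\[
	\#\{v\in S\cap X^{1/4}\mc{R}_{1,W}:v\text{ irreducible}\}=\Big(\prod_p\mu_p(S)\Big)\Vol\big(X^{1/4}\mc{R}_{1,W}\big)+O(X^{1/4})=\Big(\prod_p\mu_p(S)\Big)\Vol(\mc{R}_{1,W})X^{1/2}+O(X^{1/4}),
\]
which, combined with the previous display, is the assertion of the proposition.

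The one point that requires care is verifying the hypothesis of Lemma~\ref{lem:bounded} for $\mc{R}_{1,W}$: its $O(z)$ error ultimately reflects a bound on the boundary of the scaled region, and for a general $\mu_Q$-continuity set $W$ the region $\mc{R}_{1,W}$ need not have a rectifiable boundary. This is where the ``$W$--$\ol{W}$ trick'' of \cite[\S3]{Manjul-Piper} is needed: one squeezes $W$ between finite unions of sub-arcs $W_1\subseteq W\subseteq W_2$ of $[\gamma_Q]_1$, whose associated regions in $\VQ_\RR$ \emph{are} semi-algebraic (the conditions on $\ratio$ cutting out nice pieces), applies the lemma to $W_1$ and $W_2$, and lets the approximation improve, using $\mu_Q(\partial W)=0$ and the fact that the push-forward to $[\gamma_Q]_1$ of Lebesgue measure on $\mc{R}_1$ is absolutely continuous with respect to $\mu_Q$ (as in Lemma~\ref{lem:RofV}) to see $\Vol(\mc{R}_{1,W_i})\to\Vol(\mc{R}_{1,W})$. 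Executing this approximation while retaining control of the error term is the main obstacle; everything else is bookkeeping with the equivariances already in place.
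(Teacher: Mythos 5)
Your reduction --- one representative per $\SO_Q(\ZZ)$-orbit in $\FQ\ast\vQ$, the scaling $\mc{R}_{X,W}=X^{1/4}\mc{R}_{1,W}$, and an appeal to Lemma~\ref{lem:bounded} --- is the same setup as the paper's, but the step you explicitly defer (``executing this approximation while retaining control of the error term'') is the entire content of the proposition, so as written there is a genuine gap. You are right that applying Lemma~\ref{lem:bounded} with $H=\mc{R}_{1,W}$ is not justified for an arbitrary $\mu_Q$-continuity set $W$, since the $O(z)$ error requires boundary control that $\mc{R}_{1,W}$ need not have. But your proposed repair is an inner--outer sandwich $W_1\subseteq W\subseteq W_2$ by finite unions of sub-arcs; that is not the ``$W$--$\ol{W}$ trick'' of \cite[\S3]{Manjul-Piper}, and you do not carry it out. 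Making it work would require producing an \emph{outer} approximation of $W$ by a finite union of arcs of measure at most $\mu_Q(W)+\epsilon$ (which needs $\mu_Q(\partial W)=0$ together with compactness of the closure of $W$ in the closed geodesic $[\gamma_Q]_1$), checking that the corresponding regions $\mc{R}_{1,W_i}$ are admissible for the counting lemma, and then a limiting argument in which the implied constants depend on the approximating sets --- none of which appears in your write-up.

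The paper's actual argument needs no outer approximation at all, and that is exactly what the $W$--$\ol{W}$ trick buys: for each $\epsilon>0$ choose a bounded measurable $\mc{R}^\prime_{1,W}\subseteq\mc{R}_{1,W}$ with $\Vol(\mc{R}^\prime_{1,W})\geq\Vol(\mc{R}_{1,W})-\epsilon$ and apply Lemma~\ref{lem:bounded} to it, obtaining only the \emph{lower} bound $N^\Or(S;X,W)\geq\left(\prod_p\mu_p(S)\right)\Vol(\mc{R}_{1,W})X^{1/2}+O(X^{1/4})$; do the same for the complement $\ol{W}=[\gamma_Q]_1\setminus W$; then compare the sum of the two lower bounds with the known total $N^\Or(S;X)=\left(\prod_p\mu_p(S)\right)\Vol(\mc{R}_{1})X^{1/2}+O(X^{1/4})$ from the Bhargava--Shnidman theorem quoted just before the proposition. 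Since $\Vol(\mc{R}_{1,W})+\Vol(\mc{R}_{1,\ol{W}})=\Vol(\mc{R}_1)$, both lower bounds are forced to be equalities, so the upper bound for $W$ comes for free from the lower bound for $\ol{W}$, with no regularity of $\partial\mc{R}_{1,W}$ ever needed. Two smaller points: abelianness of $\GO^0_Q(\RR)$ does not imply the action $\ast$ is free (freeness holds, but by the explicit formula for $g(\lambda,\alpha)\ast v$), and no $O(X^{1/4})$ boundary correction is needed in your first display, since the half-open conditions $1\leq\ratio(v)<\epsilon_0^6$ and $x-\theta_+y>0$ already select exactly one representative from each $\SO_Q(\ZZ)$-orbit.
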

\begin{proof}
	Let $\epsilon>0$ and let $\mc{R}^\prime_{1,W}$ be a bounded, measurable subset of $\mc{R}_{1,W}$ such that
	\[
		\Vol(\mc{R}^\prime_{1,W})\geq\Vol(\mc{R}_{1,W})-\epsilon.
	\]
	Define $R^\prime_{X,W}:=X^{1/4}R^\prime_{1,W}$. By Lemma~\ref{lem:bounded}, the number of irreducible points in $S\cap\mc{R}^\prime_{X,W}$ is
	\[
		\left(\prod_{p}\mu_p(S)\right)\Vol(R^\prime_{1,W})X^{1/2}+O(X^{1/4}).
	\]
	Since $\mc{R}_{X,W}\supseteq\mc{R}^\prime_{X,W}$, we have that
	\[
		N^\Or(S;X,W)\geq\left(\prod_{p}\mu_p(S)\right)(\Vol(\mc{R}_{1,W})-\epsilon)X^{1/2}+O(X^{1/4}).
	\]
	This is true for all $\epsilon>0$, so that, in fact,
	\begin{equation}\label{eqn:W}
		N^\Or(S;X,W)\geq\left(\prod_{p}\mu_p(S)\right)\Vol(\mc{R}_{1,W})X^{1/2}+O(X^{1/4}).
	\end{equation}
	
	Let $\ol{W}:=[\gamma_Q]_1\setminus W$ be the complement of $W$. We similarly obtain that
	\begin{equation}\label{eqn:olW}
		N^\Or(S;X,\ol{W})\geq\left(\prod_{p}\mu_p(S)\right)\Vol(\mc{R}_{1,\ol{W}})X^{1/2}+O(X^{1/4}).
	\end{equation}
	Then,
	\begin{align}
		\left(\prod_{p}\mu_p(S)\right)\Vol(\mc{R}_{1})X^{1/2}+O(X^{1/4})&=N(S;X)\nonumber\\	&=N^\Or(S;X,W)+N^\Or(S;X,\ol{W})\nonumber\\
																&\geq\Vol(\mc{R}_{1,W})X^{1/2}+\Vol(\mc{R}_{1,\ol{W}})X^{1/2}+O(X^{1/4})\label{eqn:WolW}\\
																&=\left(\prod_{p}\mu_p(S)\right)\Vol(\mc{R}_{1})X^{1/2}+O(X^{1/4}).\nonumber
	\end{align}
	Thus, the inequality \eqref{eqn:WolW}, and hence also \eqref{eqn:W} and \eqref{eqn:olW}, are all equalities.
\end{proof}

As a corollary, we obtain the equidistribution for shapes of oriented complex cubic orders. Indeed, for $X>0$ and a $\mu_Q$-continuity set $W\subseteq[\gamma_Q]_1$, let $N^\Or_{\mathrm{rings}}(Q;X, W)$ denote the number of oriented complex cubic orders $R$ (up to isomorphism) with $[T_R^{\perp\prime}]_1=[Q]_1$, $|\Delta(R)|<X$, and $[\sh(R)]_1\in W$. Then, $N^\Or_{\mathrm{rings}}(Q;X, W)=N^\Or(\VQ_\ZZ;X,W)$. We thus obtain the following.
\begin{theorem}\label{thm:equidorientedorders}
For every $\mu_Q$-continuity set $W\subseteq[\gamma_Q]_1$,
\begin{equation}\label{eqn:equidorders}
	\lim_{X\rightarrow\infty}\frac{N^\Or_{\mathrm{rings}}(Q;X, W)}{N^\Or_{\mathrm{rings}}(Q;X)}=\frac{\mu_Q(W)}{\mu_Q([\gamma_Q]_1])}.
\end{equation}
\end{theorem}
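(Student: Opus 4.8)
The plan is to obtain Theorem~\ref{thm:equidorientedorders} as an immediate consequence of Proposition~\ref{prop:OrSWcount} and Lemma~\ref{lem:RofV}, specialized to $S=\VQ_\ZZ$. First I would record the bookkeeping identifications. By the Bhargava--Shnidman bijection (Theorem~21 of \cite{Manjul-Ari}), together with the definition of $\sh(v)$ for $v\in\VQ_\ZZ$ and the fact (Proposition~\ref{prop:domainmaximal}) that irreducibility of the associated binary cubic form is equivalent to the corresponding oriented cubic ring being an order in a cubic field, one has
\[
	N^\Or_{\mathrm{rings}}(Q;X)=N^\Or(\VQ_\ZZ;X)\qquad\text{and}\qquad N^\Or_{\mathrm{rings}}(Q;X,W)=N^\Or(\VQ_\ZZ;X,W).
\]
Here the shape descends to $\SO_Q(\ZZ)$-orbits because the map $\sh:\VQ_\RR\to[\gamma_Q]_1$ constructed earlier is $\GO^0_Q(\RR)$-equivariant and $\SO_Q(\ZZ)=\GO^0_Q(\RR)\cap\gl_2(\ZZ)$. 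I would also note that $\VQ_\ZZ$ is exactly the subset of $\ZZ^2\cap\VQ_\RR$ cut out by the finitely many congruence conditions $sb\equiv rc\mod{3t}$ and $sc\equiv tb\mod{3r}$, so that Proposition~\ref{prop:OrSWcount} does apply with $S=\VQ_\ZZ$.

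Next, applying Proposition~\ref{prop:OrSWcount} with $S=\VQ_\ZZ$ gives
\[
	N^\Or_{\mathrm{rings}}(Q;X,W)=\Big(\prod_p\mu_p(\VQ_\ZZ)\Big)\Vol(\mc{R}_{1,W})X^{1/2}+O(X^{1/4}),
\]
and, taking $W=[\gamma_Q]_1$ (trivially a $\mu_Q$-continuity set),
\[
	N^\Or_{\mathrm{rings}}(Q;X)=\Big(\prod_p\mu_p(\VQ_\ZZ)\Big)\Vol(\mc{R}_1)X^{1/2}+O(X^{1/4}).
\]
The product $\prod_p\mu_p(\VQ_\ZZ)=\tfrac{1}{3^{\alpha_D}rt}$ is positive and finite, and the leading coefficient of the denominator is the positive constant $C_D$, so $N^\Or_{\mathrm{rings}}(Q;X)>0$ for $X$ large. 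Dividing the two asymptotics and letting $X\to\infty$, the factor $\prod_p\mu_p(\VQ_\ZZ)$ and the power $X^{1/2}$ cancel while the $O(X^{1/4})$ error terms are $o(X^{1/2})$ and so contribute nothing to the limit, yielding
\[
	\lim_{X\to\infty}\frac{N^\Or_{\mathrm{rings}}(Q;X,W)}{N^\Or_{\mathrm{rings}}(Q;X)}=\frac{\Vol(\mc{R}_{1,W})}{\Vol(\mc{R}_1)}.
\]
Finally, Lemma~\ref{lem:RofV} identifies the right-hand side with $\mu_Q(W)/\mu_Q([\gamma_Q]_1)$, which is the claim.

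I do not expect any genuine obstacle in this argument: all the substance has been front-loaded into Proposition~\ref{prop:OrSWcount} (which packages the $W$--$\overline{W}$ trick and the Bhargava--Shnidman lattice-point counts, including the $O(X^{1/4})$ bound on reducible orbits) and into Lemma~\ref{lem:RofV} (the ratio-of-volumes computation routed through the group $\GO^0_Q(\RR)$ and the transfer-of-measure formula \eqref{eqn:transfer_integral}). The only mildly delicate point is the clean identification $N^\Or_{\mathrm{rings}}(Q;X,W)=N^\Or(\VQ_\ZZ;X,W)$ together with the verification that $\VQ_\ZZ$ meets the hypotheses of Proposition~\ref{prop:OrSWcount}, and the observation that the denominator is eventually nonzero; both are routine given the results already established.
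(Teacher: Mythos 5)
Your proposal is correct and follows essentially the same route as the paper: the paper also deduces this theorem immediately from the identification $N^\Or_{\mathrm{rings}}(Q;X,W)=N^\Or(\VQ_\ZZ;X,W)$, Proposition~\ref{prop:OrSWcount} applied with $S=\VQ_\ZZ$, and Lemma~\ref{lem:RofV}. Your extra bookkeeping (equivariance of $\sh$, the congruence conditions defining $\VQ_\ZZ$, positivity of the denominator) is just a more explicit write-up of the same argument.
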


\subsection{Equidistribution for oriented complex cubic fields}
Turning \eqref{eqn:equidorders} into a statement about \textit{maximal} cubic orders requires a sieve adapted from \cite[\S5]{DH}, as in \cite[\S5]{Manjul-Piper}. According to \cite{Manjul-Ari}, the indefinite integral binary quadratic forms arising as the primitive trace-zero forms of some cubic field are exactly those with discriminant $D$ such that $D$ or $-D/3$ is a fundamental discriminant. Those whose $D$ is not square correspond to closed geodesics, while $D=1$ or $9$ correspond to vertical non-closed geodesics. We exclude the cases $D=1$ and $D=9$ as they correspond to the pure cubic fields and have been dealt with in \cite{PureCubicShapes} (and must, in fact, be dealt with separately since the geodesics have infinite measure). We therefore assume $Q$ is such that $D$ or $-D/3$ is fundamental and $D$ is non-square.

Let $N^\Or(Q;X, W)$ denote the number of oriented complex cubic fields $K$ (up to isomorphism) with $[\TZP]_1=[Q]_1$, $|\Delta(K)|<X$, and $[\sh(K)]_1\in W$. As described in \cite[\S4.3]{Manjul-Ari}, for a prime $p$, those $v=(b,c)\in\VQ_\ZZ$ that are $p$-maximal are defined by congruences modulo $p^2$. Let $S_p\subseteq\VQ_\ZZ$ denote the set of elements that correspond to $p$-maximal rings. For $Y>2$, let
\[
	S_{<Y}:=\bigcap_{\substack{p\text{ prime}\\ p<Y}}S_p
\]
and let
\[
	S_{\mathrm{max}}:=\bigcap_{p\text{ prime}}S_p.
\]
Then, $N^\Or(Q;X, W)=N^\Or(S_{\mathrm{max}};X,W)$. Since $S_{\mathrm{max}}$ is given by infinitely many congruence conditions, Proposition~\ref{prop:OrSWcount} does not apply. However, for each $Y$, $S_{<Y}$ is given by finitely many congruences conditions, so we have that
\[
	N^\Or(S_{<Y};X, W)=\left(\prod_p\mu_p(S_{<Y})\right)\cdot\Vol(\mc{R}_{1,W})X^{1/2}+O(X^{1/4}).
\]
Let $\ol{S}_p:=\VQ_\ZZ\setminus S_p$ be the complement of $S_p$. By the discussion on page~78 of \cite{Manjul-Ari},
\[
	N^\Or(\ol{S}_p;X)=O(X^{1/2}/p^2).
\]
We may now follow the sieve of \cite[\S5]{DH}.
\begin{theorem}
	For any $\mu_Q$-continuity set $W\subseteq[\gamma_Q]_1$,
	\[
		N^\Or(Q;X,W)=\left(\prod_p\mu_p(S^{\text{max}})\right)\Vol(\mc{R}_{1,W})X^{1/2}+o(X^{1/2}).
	\]
\end{theorem}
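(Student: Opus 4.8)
The plan is to run the standard Davenport--Heilbronn-style sieve, squeezing $N^\Or(Q;X,W)=N^\Or(S_{\mathrm{max}};X,W)$ between a truncated count and a tail estimate. Write $A_Y:=\big(\prod_p\mu_p(S_{<Y})\big)\Vol(\mc{R}_{1,W})$ and $A:=\big(\prod_p\mu_p(S_{\mathrm{max}})\big)\Vol(\mc{R}_{1,W})$, so that the assertion is $N^\Or(Q;X,W)=AX^{1/2}+o(X^{1/2})$.

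First I would truncate. For each fixed cutoff $Y>2$ the set $S_{<Y}$ is cut out by finitely many congruence conditions, so Proposition~\ref{prop:OrSWcount}, applied with the shape restricted to $W$, gives
\[
	N^\Or(S_{<Y};X,W)=A_YX^{1/2}+O_Y(X^{1/4}),
\]
where the implied constant is allowed to depend on $Y$. Since $S_{\mathrm{max}}\subseteq S_{<Y}$, and since any $v\in S_{<Y}\setminus S_{\mathrm{max}}$ must fail $p$-maximality for some prime $p\ge Y$ (being $p$-maximal for all $p<Y$), intersecting with $\{[\sh(v)]_1\in W\}$ throughout yields the two-sided bound
\[
	N^\Or(S_{<Y};X,W)-\sum_{p\ge Y}N^\Or(\overline{S}_p;X,W)\ \le\ N^\Or(Q;X,W)\ \le\ N^\Or(S_{<Y};X,W).
\]

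Next I would control the tail. Imposing the shape condition only decreases a count, so $N^\Or(\overline{S}_p;X,W)\le N^\Or(\overline{S}_p;X)=O(X^{1/2}/p^2)$ by the estimate on p.~78 of \cite{Manjul-Ari}, and this bound is uniform in $p$; summing over $p\ge Y$ and using $\sum_{p\ge Y}p^{-2}=O(1/Y)$ gives $\sum_{p\ge Y}N^\Or(\overline{S}_p;X,W)=O(X^{1/2}/Y)$ with an absolute implied constant. Dividing the two-sided bound by $X^{1/2}$ and letting $X\to\infty$ then gives, for every fixed $Y$,
\[
	A_Y-O(1/Y)\ \le\ \liminf_{X\to\infty}\frac{N^\Or(Q;X,W)}{X^{1/2}}\ \le\ \limsup_{X\to\infty}\frac{N^\Or(Q;X,W)}{X^{1/2}}\ \le\ A_Y.
\]
Because $\mu_p(\overline{S}_p)=O(p^{-2})$, the Euler products converge absolutely and $A_Y\to A$ as $Y\to\infty$; letting $Y\to\infty$ in the last display pins both the liminf and the limsup to $A$, which proves the theorem.

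The routine ingredients --- the volume scaling $\Vol(\mc{R}_{X,W})=\Vol(\mc{R}_{1,W})X^{1/2}$, the convergence of the local densities, and Proposition~\ref{prop:OrSWcount} itself --- are already in place. The one point that demands genuine care is the \emph{uniformity in $p$} of the tail bound $N^\Or(\overline{S}_p;X)=O(X^{1/2}/p^2)$: one must verify that the implied constant coming from \cite{Manjul-Ari} is absolute (independent of both $p$ and $X$), since it is precisely this uniformity that licenses interchanging the limits $X\to\infty$ and $Y\to\infty$. With that checked, the argument is the textbook sieve of \cite[\S5]{DH}.
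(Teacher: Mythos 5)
Your proposal is correct and is essentially the paper's own argument: the same squeeze of $N^\Or(S_{\mathrm{max}};X,W)$ between the truncated count $N^\Or(S_{<Y};X,W)$ (handled by Proposition~\ref{prop:OrSWcount}) and the tail $\sum_{p\ge Y}N^\Or(\ol{S}_p;X)=O(X^{1/2}\sum_{p\ge Y}p^{-2})$, followed by letting $Y\to\infty$, exactly as in the sieve of \cite[\S5]{DH} that the paper follows. Your remark about uniformity in $p$ of the tail bound is a fair point of care, but it is the same input from \cite{Manjul-Ari} that the paper uses, not a different route.
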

\begin{proof}
	For any positive integer $Y$, $N(S^{\text{max}};X,W)\leq N(S_{<Y};X,W)$ so that 
	\begin{align*}
		\limsup_{X\rightarrow\infty}\dfrac{N(S^{\text{max}};X,W)}{X^{1/2}}	&\leq\lim_{Y\rightarrow\infty}\lim_{X\rightarrow\infty}\dfrac{N(S_{<Y};X,W)}{X^{1/2}}\\
																	&\leq\left(\prod_{p}\mu_p(S^{\text{max}})\right)\Vol(\mc{R}_{1,W}).
	\end{align*}
	Conversely,
	\[
		S_{<Y}\cap\mc{R}_{X,W}\subseteq(S^{\text{max}}\cap\mc{R}_{X,W})\cup\bigcup_{p\geq Y}\ol{S}_p
	\]
	so that
	\begin{align*}
		\liminf_{X\rightarrow\infty}\dfrac{N(S^{\text{max}};X,W)}{X^{1/2}}	&\geq\lim_{Y\rightarrow\infty}\lim_{X\rightarrow\infty}\dfrac{N(S_{<Y};X,W)}{X^{1/2}}-O\left(\sum_{p\geq Y}\dfrac{N^\Or(\ol{S}_p;X)}{X^{1/2}}\right)\\
															&\geq\lim_{Y\rightarrow\infty}\lim_{X\rightarrow\infty}\dfrac{N(S_{<Y};X,W)}{X^{1/2}}-O\left(\sum_{p\geq Y}\dfrac{1}{p^2}\right)\\
																	&\geq\left(\prod_{p}\mu_p(S^{\text{max}})\right)\Vol(\mc{R}_{1,W})
	\end{align*}
	since $\ds\sum_{p\geq Y}p^{-2}$ is the tail of a convergent series.
\end{proof}

This, together with Lemma~\ref{lem:RofV}, implies the equidistribution of shapes of oriented complex cubic fields on $[\gamma_Q]_1$.
\begin{corollary}\label{cor:equidorientedfields}
	For any $\mu_Q$-continuity set $W\subseteq[\gamma_Q]_1$,
	\[
		\lim_{X\rightarrow\infty}\frac{N^\Or(Q;X,W)}{N^\Or(Q;X)}=\frac{\mu_Q(W)}{\mu_Q([\gamma_Q]_1)}.
	\]
\end{corollary}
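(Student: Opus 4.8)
The plan is to obtain the corollary as an immediate formal consequence of the preceding theorem and Lemma~\ref{lem:RofV}, by dividing two instances of the theorem. First I would observe that every oriented complex cubic field has its shape on the geodesic $[\gamma_Q]_1$: by construction the map $\sh\colon\VQ_\RR\to[\gamma_Q]_1$ lands in the geodesic (this is Theorem~\ref{thm:3TKonMK} recast in the present language), so that $N^\Or(Q;X)=N^\Or(Q;X,[\gamma_Q]_1)=N^\Or(S^{\text{max}};X)$. Applying the preceding theorem with $W=[\gamma_Q]_1$ therefore gives
\[
	N^\Or(Q;X)=\left(\prod_p\mu_p(S^{\text{max}})\right)\Vol(\mc{R}_1)X^{1/2}+o(X^{1/2}).
\]

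Next I would form the quotient $N^\Or(Q;X,W)/N^\Or(Q;X)$, using the theorem for the numerator as well. The non-Archimedean factor $\prod_p\mu_p(S^{\text{max}})$ and the main power $X^{1/2}$ cancel, leaving
\[
	\lim_{X\to\infty}\frac{N^\Or(Q;X,W)}{N^\Or(Q;X)}=\frac{\Vol(\mc{R}_{1,W})}{\Vol(\mc{R}_1)}.
\]
Here one must know that $\Vol(\mc{R}_1)\neq0$, so that the $o(X^{1/2})$ errors are genuinely negligible and the limit exists; this is supplied by the volume formula $\Vol(\mc{R}_X)=\dfrac{3\sqrt3\,rt\log\epsilon_0}{D}X^{1/2}$ recalled from \cite{Manjul-Ari}, which is strictly positive because $r,t>0$ and $\log\epsilon_0>0$ (the latter since $D$ is non-square, so $\epsilon_0>1$). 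Finally, Lemma~\ref{lem:RofV} rewrites the right-hand side as $\mu_Q(W)/\mu_Q([\gamma_Q]_1)$, a bona fide ratio of finite positive quantities precisely because $D$ is non-square, so that $[\gamma_Q]_1$ is a closed geodesic of finite positive hyperbolic length. This yields the claimed identity.

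I expect no real obstacle here: the corollary is pure bookkeeping once the preceding theorem and Lemma~\ref{lem:RofV} are in hand. The only points requiring care are the ones just noted --- that the normalizing quantities $\Vol(\mc{R}_1)$ and $\mu_Q([\gamma_Q]_1)$ are nonzero and finite --- which is exactly where the standing assumption (made in the paragraph preceding the corollary) that $D$ is non-square enters, excluding the pure cubic cases $D=1,9$ whose geodesics have infinite measure and are treated separately in \cite{PureCubicShapes}.
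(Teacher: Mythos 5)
Your proposal is correct and follows essentially the same route as the paper: the corollary is obtained by applying the preceding theorem to both $W$ and $[\gamma_Q]_1$, cancelling the common factor $\prod_p\mu_p(S^{\text{max}})X^{1/2}$, and invoking Lemma~\ref{lem:RofV} to convert $\Vol(\mc{R}_{1,W})/\Vol(\mc{R}_1)$ into $\mu_Q(W)/\mu_Q([\gamma_Q]_1)$. Your extra remarks on the positivity of $\Vol(\mc{R}_1)$ and the finiteness of $\mu_Q([\gamma_Q]_1)$ (via the non-square discriminant assumption) are sound and merely make explicit what the paper leaves implicit.
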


\subsection{Equidistribution for non-oriented cubic fields}
Finally, we can use the result of the previous section to prove Theorem~\ref{thm:Equidistribution}, i.e.\ equidistribution for (non-oriented) cubic fields and $\gl_2(\ZZ)$-equivalence, expanding upon the discussion of \cite[pp.~55--56]{Manjul-Ari}.

For concreteness in this discussion, we view $[\gamma_Q]$ and $[\gamma_Q]_1$ as subsets of the Gauss fundamental domains $\mc{G}$ and $\mc{G}_1$ of \eqref{eqn:G} and \eqref{eqn:G1}, respectively. For $W\subseteq[\gamma_Q]$ a $\mu_Q$-continuity set\footnote{By abuse of notation, we use $\mu_Q$ to denote the hyperbolic measures on both $[\gamma_Q]_1$ and $[\gamma_Q]$.} and $X>0$, let $N(Q;X, W)$ denote the number of (isomorphism classes of) complex cubic fields $K$ with $[\TZP]=[Q]$, $|\Delta(K)|<X$, and $[\sh(K)]\in W$. Similarly, for $N_{\mathrm{rings}}(Q;X,W)$ and orders in complex cubic fields.

First note that each isomorphism class of complex cubic orders corresponds to two isomorphism classes of oriented complex cubic orders (because of the two possible orderings of a basis). If $Q$ is ambiguous, then $[Q]_1=[Q]$. Otherwise, $[Q]$ is the union of two $\SL_2(\ZZ)$-equivalence classes $[Q]_1$ and $[wQ]_1$, where
\[
	w:=\vect{-1\\&1}.
\]
Geometrically, when $Q$ is ambiguous $[\gamma_Q]_1$ is the mirror image of itself across the imaginary axis, so that the map $[z]_1\mapsto[z]$ gives a double cover of $[\gamma_Q]$ by $[\gamma_Q]_1$. Therefore,
\begin{equation}\label{eqn:ambmuQ}
	\mu_Q([\gamma_Q])=\frac{\mu_Q([\gamma_Q]_1)}{2}.
\end{equation}
When $Q$ is not ambiguous, this is not the case; instead $[\gamma_Q]_1$ is the mirror image of $[\gamma_{wQ}]_1$. The map $[z]_1\mapsto[z]$ then gives a double cover from $[\gamma_Q]_1\cup[\gamma_{wQ}]_1$ to $[\gamma_Q]$. Therefore,
\begin{equation}\label{eqn:nonambmuQ}
	\mu_Q([\gamma_Q])=\frac{\mu_Q([\gamma_Q]_1)+\mu_{wQ}([\gamma_{wQ}]_1)}{2}=\mu_Q([\gamma_Q]_1).
\end{equation}
To deal with the translation between oriented and non-oriented orders, we must therefore deal with two cases depending on whether $Q$ is ambiguous.

Suppose first that $Q$ is not ambiguous. Let $R$ be a complex cubic order contained in a non-pure cubic field and suppose $[T_R^{\perp\prime}]=[Q]$. Then, $R$ corresponds to two isomorphism classes of oriented cubic orders $(R,\delta_1)$ and $(R,\delta_2)$. Let us denote their shapes $[\sh_1]_1$ and $[\sh_2]_1$, respectively. Then, without loss of generality, $[\sh_1]\in[\gamma_Q]_1$ and $[\sh_2]\in[\gamma_{wQ}]_1$ (and $[\sh1]=[\sh_2]=[\sh(R)]$). Suppose $W\subseteq[\gamma_Q]$ is a $\mu_Q$-continuity set. There are continuity sets $W_1\subseteq[\gamma_Q]_1$ and $W_2\subseteq[\gamma_{wQ}]_1$ such that $\wt{W}:=W_1\cup W_2\subseteq[\gamma_Q]_1\cup[\gamma_{wQ}]_1$ is the double-cover of $W$ under the map $[z]_1\mapsto[z]$ (and note that, as subsets of $\mc{G}$, $W_2=w\ast W_1$, so that $\mu_Q(W_1)=\mu_{wQ}(W_2)$). Then,
\begin{equation}\label{eqn:nonambmuQW}
	\mu_Q(W)=\frac{\mu_Q(W_1)+\mu_{wQ}(W_2)}{2}=\mu_Q(W_1).
\end{equation}
Note that
\[
	[\sh(R)]\in W\quad \text{if and only if}\quad[\sh_1]_1\in W_1\text{ and }[\sh_2]_1\in W_2
\]
so that
\[
	N(Q;X,W)=N^\Or(Q;X,W_1)=N^\Or(wQ;X,W_2)
\]
(and similarly for $N_{\mathrm{rings}}(Q;X,W)$).
Then,
\begin{equation}\label{eqn:unorientedtoorientedratio}
	\frac{N(Q;X,W)}{N(Q;X)}=\frac{N^\Or(Q;X,W_1)}{N^\Or(Q;X)}.
\end{equation}

Now, suppose that $Q$ is ambiguous. As in the previous paragraph, we get $(R,\delta_i)$ and $[\sh_i]_1$, for $i=1,2$. Without loss of generality, suppose $[\sh_1]_1\in\mc{G}$. Let $\wt{W}$ be the inverse image of $W$ under the double-cover $[\gamma_Q]_1\rightarrow[\gamma_Q]$. Then, $\wt{W}=W\cup w\ast W$, so that
\begin{equation}\label{eqn:ambmuQW}
	\mu_Q(W)=\frac{\mu_Q(\wt{W})}{2}.
\end{equation}
Now,
\[
	[\sh(R)]\in W\quad \text{if and only if}\quad[\sh_1]_1\in W\text{ and }[\sh_2]_1\in w\ast W.
\]
So, it certainly looks like we are double-counting everything when considering oriented rings and $\SL_2(\ZZ)$-equivalence. It is however possible that sometimes $[\sh_1]_1=[\sh_2]_1$. Luckily, this only happens when $[\sh(R)]$ lies on the boundary of $\mc{G}$ and Theorem~\ref{thm:avoidboundaries} shows that this does not occur for shapes of orders in non-pure complex cubic fields! We thus have that
\begin{equation}\label{eqn:unorientedtooriented}
	N(Q;X,W)=\frac{1}{2}N^\Or(Q;X,\wt{W})
\end{equation}
(and similarly for $N_{\mathrm{rings}}(Q;X,W)$).

Therefore, when $Q$ is not ambiguous, combining \eqref{eqn:nonambmuQ}, \eqref{eqn:nonambmuQW}, and \eqref{eqn:unorientedtoorientedratio} with Corollary~\ref{cor:equidorientedfields}, we have that
\[
	\lim_{X\rightarrow\infty}\frac{N(Q;X,W)}{N(Q;X)}=\lim_{X\rightarrow\infty}\frac{N^\Or(Q;X,W_1)}{N^\Or(Q;X)}=\frac{\mu_Q(W_1)}{\mu_Q([\gamma_Q]_1)}=\frac{\mu_Q(W)}{\mu_Q([\gamma_Q])}.
\]
And when $Q$ is ambiguous, we can similarly combine \eqref{eqn:ambmuQ}, \eqref{eqn:ambmuQW}, and \eqref{eqn:unorientedtooriented} with Corollary~\ref{cor:equidorientedfields} to obtain that
\[
	\lim_{X\rightarrow\infty}\frac{N(Q;X,W)}{N(Q;X)}=\lim_{X\rightarrow\infty}\frac{\frac{1}{2}N^\Or(Q;X,\wt{W})}{\frac{1}{2}N^\Or(Q;X)}=\frac{\frac{1}{2}\mu_Q(\wt{W})}{\frac{1}{2}\mu_Q([\gamma_Q]_1)}=\frac{\mu_Q(W)}{\mu_Q([\gamma_Q])}.
\]
We have therefore proved Theorem~\ref{thm:Equidistribution}, as well as its analogue for orders which he state here.
\begin{theorem}\label{thm:equidorders}
	For every $\mu_Q$-continuity set $W\subseteq[\gamma_Q]$,
\[
	\lim_{X\rightarrow\infty}\frac{N_{\mathrm{rings}}(Q;X, W)}{N_{\mathrm{rings}}(Q;X)}=\frac{\mu_Q(W)}{\mu_Q([\gamma_Q])}.
\]
\end{theorem}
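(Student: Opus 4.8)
The plan is to repeat essentially verbatim the argument just given for Theorem~\ref{thm:Equidistribution}, replacing the field-counting functions $N^\Or(Q;X,\cdot)$ and $N(Q;X,\cdot)$ by their order-counting analogues $N^\Or_{\mathrm{rings}}(Q;X,\cdot)$ and $N_{\mathrm{rings}}(Q;X,\cdot)$, and invoking Theorem~\ref{thm:equidorientedorders} in place of Corollary~\ref{cor:equidorientedfields}. First I would recall that each isomorphism class of complex cubic order corresponds to exactly two isomorphism classes of oriented complex cubic orders (the two orderings of a basis of $R/\ZZ$), and that the measure-theoretic identities \eqref{eqn:ambmuQ}, \eqref{eqn:nonambmuQ}, \eqref{eqn:nonambmuQW}, \eqref{eqn:ambmuQW} depend only on $Q$, $[\gamma_Q]$, and $W$, not on whether we count fields or orders; so they may be reused unchanged.

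Next I would split into the two cases already treated. When $Q$ is not ambiguous, the order analogue of \eqref{eqn:unorientedtoorientedratio}, namely $N_{\mathrm{rings}}(Q;X,W)=N^\Or_{\mathrm{rings}}(Q;X,W_1)$, holds for exactly the same bijective reason as in the field case: an order $R$ with $[T_R^{\perp\prime}]=[Q]$ splits into two oriented orders, one with shape class in $[\gamma_Q]_1$ and one in $[\gamma_{wQ}]_1$, and $[\sh(R)]\in W$ if and only if these land in the respective halves of $\wt{W}$. Combining this with \eqref{eqn:nonambmuQ}, \eqref{eqn:nonambmuQW}, and Theorem~\ref{thm:equidorientedorders} gives the ratio $\mu_Q(W)/\mu_Q([\gamma_Q])$. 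When $Q$ is ambiguous, the order analogue of \eqref{eqn:unorientedtooriented}, $N_{\mathrm{rings}}(Q;X,W)=\tfrac12 N^\Or_{\mathrm{rings}}(Q;X,\wt{W})$, requires knowing that the two oriented structures on a single order never have the same $\SL_2(\ZZ)$-shape class, i.e.\ that the shape never lands on the boundary of $\mc{G}$.

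The one point beyond pure bookkeeping is this boundary-avoidance statement for \emph{orders} rather than fields. But the proof of Theorem~\ref{thm:avoidboundaries} in \S\ref{sec:avoidboundary} passes entirely through Proposition~\ref{prop:irrationality}, and the remark following that proposition already records that its irrationality conclusion holds for all orders in non-pure complex cubic fields (maximality was never used); the $\SL_2(\ZZ)$-conjugation trick excluding the circular arc likewise uses only Proposition~\ref{prop:irrationality}. Hence the shape of any order in a non-pure complex cubic field avoids $\partial\mc{G}$, and the ambiguous-case argument of \S\ref{sec:equidistribution} applies unchanged, giving $N_{\mathrm{rings}}(Q;X,W)=\tfrac12 N^\Or_{\mathrm{rings}}(Q;X,\wt{W})$ and hence, via \eqref{eqn:ambmuQ}, \eqref{eqn:ambmuQW}, and Theorem~\ref{thm:equidorientedorders}, the ratio $\mu_Q(W)/\mu_Q([\gamma_Q])$. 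Combining the two cases yields
\[
	\lim_{X\rightarrow\infty}\frac{N_{\mathrm{rings}}(Q;X, W)}{N_{\mathrm{rings}}(Q;X)}=\frac{\mu_Q(W)}{\mu_Q([\gamma_Q])}.
\]
I do not expect a genuine obstacle: all the substantive analysis — the lattice-point count of Lemma~\ref{lem:bounded}, the $W$--$\ol{W}$ argument of Proposition~\ref{prop:OrSWcount}, and the ratio-of-volumes computation of Lemma~\ref{lem:RofV} — is already in place at the level of oriented orders in Theorem~\ref{thm:equidorientedorders}, and the orientation/ambiguity translation is exactly as in the proof of Theorem~\ref{thm:Equidistribution}, with the only added observation being that boundary-avoidance holds for orders, not merely fields.
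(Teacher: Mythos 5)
Your proposal is correct and follows essentially the same route as the paper, which proves Theorem~\ref{thm:equidorders} in parallel with Theorem~\ref{thm:Equidistribution} by the same oriented-to-unoriented translation (splitting on whether $Q$ is ambiguous and using \eqref{eqn:ambmuQ}--\eqref{eqn:unorientedtooriented} with Theorem~\ref{thm:equidorientedorders} in place of Corollary~\ref{cor:equidorientedfields}). Your explicit observation that boundary-avoidance holds for orders via the remark after Proposition~\ref{prop:irrationality} is exactly the point the paper relies on when it invokes Theorem~\ref{thm:avoidboundaries} for shapes of orders in non-pure complex cubic fields.
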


\section{Generalization to higher degree}\label{sec:higherdegree}
In this section, we prove Theorem~\ref{thm:higherdegree}, or rather a slight generalization: the shape of any order $\mc{O}$ in a finite \'{e}tale $\QQ$-algebra lies on the majorant space of the trace-zero form of $\mc{O}$. This gives a vast generalization of Theorem~\ref{thm:3TKonMK}.

Let $\A$ be an $n$-dimensional \'{e}tale $\QQ$-algebra. More concretely, $\A\cong K_1\times\cdots\times K_r$, where the $K_i$ are algebraic number fields. Let $d_i:=\deg K_i$, so that $n=\ds\sum_{i=1}^rd_i$. Each $K_i$ has $d_i$ embeddings into $\CC$ which we will denote $\sigma_{i,1},\cdots,\sigma_{i,d_i}$. Then, as in the case of number fields, we have an embedding
\[
	\funcdef{j}{\A}{\CC}{(a_1,\dots,a_r)}{(\sigma_{i,k}(a_i))_{i,k}}
\]
that embeds $\A$ into an $n$-dimensional Euclidean space $\A_\RR$. Let $\mc{O}$ be an order in $\A$. We can then once again speak of the shape $\MO$ and the trace-zero form $\TO$ of $\mc{O}$. Let $r_1$ be the number of real embeddings of $\A$ into $\CC$ and $r_2$ be the number of pairs of complex embeddings. A result of Olga Taussky-Todd \cite{Taussky} shows that the trace form of $\mc{O}$ has signature $(r_1+r_2,r_2)$. The trace-zero form $\TO$ then has signature $(r_1+r_2-1,r_2)$. Let $O(\TO)$ be the orthogonal group of $\TO$. Let $T$ be the Gram matrix of $\TO$ (with respect to some integral basis of $\mc{O}^\perp$). Siegel \cite[\S3.2]{Siegel} defines the \textit{majorant space}\footnote{Siegel refers to it as the $\mf{H}$-space.} $\mf{H}_\mc{O}$ of $T$ as the set of positive definite matrices $M$ such that
\[
	MT^{-1}M=T.
\]
This is a model for the symmetric space of $O(\TO)$. A direct generalization of Theorem~\ref{thm:3TKonMK} would say that the shape of $\mc{O}$ ``lies on'' $\mf{H}_\mc{O}$. One can verify ``by hand'' that this equality holds for the Gram matrices \eqref{eqn:MGram} and \eqref{eqn:TGram} in the cubic case, thus providing another proof of Theorem~\ref{thm:3TKonMK}. By very slightly modifying the work of \cite{Taussky}, we show this holds in full generality.

\begin{theorem}
	Let $T$ and $M$ be the Gram matrices of $\TO$ and $\MO$, respectively, with respect to some integral basis of $\mc{O}^\perp$. Then,
	\[
		MT^{-1}M=T.
	\]
\end{theorem}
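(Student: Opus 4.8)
The plan is to reduce the matrix identity $MT^{-1}M = T$ to a statement about the underlying linear-algebra of the Minkowski and trace pairings on $\mc{O}^\perp_\RR := \mc{O}^\perp \otimes_\QQ \RR$, and then verify that statement using the explicit description of both forms in terms of the embeddings $\sigma_{i,k}$. Concretely, I would first pass from Gram matrices to the pairings themselves: if $B$ denotes the linear operator on $\mc{O}^\perp_\RR$ that is self-adjoint with respect to $T$ and represents $M$ (i.e.\ $M(x,y) = T(Bx,y)$ for all $x,y$), then the identity $MT^{-1}M=T$ is precisely the statement $B^2 = \mathrm{id}$, equivalently that $B$ is an involution of $\mc{O}^\perp_\RR$ that is $T$-self-adjoint and for which $M(x,x) = T(Bx,x)$ is positive definite. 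So the whole theorem amounts to exhibiting this involution.

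Next I would identify $B$ explicitly. The Minkowski form on $\A_\RR$ is $\langle j(x), j(y)\rangle = \sum_{i,k} \sigma_{i,k}(x)\overline{\sigma_{i,k}(y)}$ while the trace form is $\Tr(xy) = \sum_{i,k}\sigma_{i,k}(x)\sigma_{i,k}(y)$; the two differ exactly by complex conjugation on the complex places. Thus there is a natural $\RR$-linear involution $c$ of $\A_\RR$ — acting as the identity on the real coordinates and as complex conjugation swapping each conjugate pair of complex coordinates — such that $\langle j(x),j(y)\rangle = \Tr(x\, c(y))$. This $c$ preserves the trace-zero subspace (since $\Tr \circ c = \Tr$), so it restricts to an operator $B$ on $\mc{O}^\perp_\RR$, and by construction $M(x,y) = T(Bx,y)$. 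Since $c^2 = \mathrm{id}$ we get $B^2 = \mathrm{id}$; since $c$ is $\Tr$-symmetric ($\Tr(c(x)y) = \Tr(x c(y))$, both equal to $\sum \overline{\sigma(x)}\,\overline{\sigma(y)}$ over complex places plus the real part) it is $T$-self-adjoint; and positive-definiteness of $M$ is the classical fact that the Hermitian form is positive definite. Unwinding the correspondence between operators and Gram matrices — $M = TB$ in matrix terms with $B$ $T$-self-adjoint meaning $T^{-1}M$ symmetric, hence $M T^{-1} M = (TB)T^{-1}(TB) = T B^2 = T$ — then gives the claimed identity.

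The one point requiring genuine care, and which I expect to be the main obstacle, is the integrality/basis bookkeeping: the operator $c$ is defined on $\A_\RR$ in terms of the archimedean coordinates, and one must check that it genuinely descends to an $\RR$-linear endomorphism of $\mc{O}^\perp_\RR$ whose matrix in the chosen \emph{integral} basis of $\mc{O}^\perp$ relates $M$ and $T$ exactly as stated, with the scaling conventions (the factor of $n$ in the perp map \eqref{def:perpmap}, and the normalizations built into $M_R^\perp$ and $T_R^\perp$ so that both take integral values) consistently tracked. Since $c$ is only $\RR$-linear and not $\QQ$-rational, one cannot argue on $\mc{O}^\perp$ directly; instead one works on $\mc{O}^\perp_\RR$, notes that $M$ and $T$ are both Gram matrices of genuine pairings there with respect to the same integral basis, and observes that the matrix identity is basis-independent once it is established for one basis. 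Modulo this care, the argument is exactly the ``very slight modification of \cite{Taussky}'' alluded to: Taussky-Todd's computation of the signature of the trace form proceeds by diagonalizing over the archimedean places, and the same diagonalization simultaneously diagonalizes $B$ with eigenvalues $\pm 1$, making $B^2 = \mathrm{id}$ and the definiteness of $M$ transparent.
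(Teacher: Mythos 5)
Your argument is correct. The underlying mechanism is the same one the paper uses --- namely that $M$ and $T$ differ exactly by complex conjugation at the complex places --- but the execution is genuinely different in a way worth noting. The paper first reduces to the unprojected forms $\wt{M}=\ol{A}^TA$ and $\wt{T}=A^TA$ (where $A$ is the matrix of archimedean embeddings), and then follows Taussky-Todd's explicit simultaneous block-diagonalization: it constructs real matrices $R$, $D$, $P$ so that $P^T\wt{M}P=B_{11}\oplus B_{22}$ and $P^T\wt{T}P=B_{11}\oplus(-B_{22})$, after which the identity is a two-line block computation. You instead exhibit the intertwining involution $c$ intrinsically, observe $M=TB$ with $B$ the matrix of $c|_{\mc{O}^\perp_\RR}$, and conclude from $B^2=\mathrm{id}$ that $MT^{-1}M=TB^2=T$; the paper's diagonalized quotient $\wt{T}_1^{-1}\wt{M}_1=I\oplus(-I)$ is precisely your $B$ in eigenbasis form, as you yourself remark. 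What your route buys is brevity and coordinate-freeness --- no construction of $R$, $D$, $P$ and no block bookkeeping --- at the cost of having to verify directly that $c$ preserves $\A_\RR$ and the trace-zero subspace and is $T$-self-adjoint, all of which you do or sketch correctly (and the positive-definiteness of $M$, while relevant to membership in the majorant space, is not needed for the matrix identity itself). Your handling of the scaling and basis issues is also sound: the identity is invariant under $M\mapsto P^TMP$, $T\mapsto P^TTP$ and under simultaneous rescaling of both forms, so the factor of $n$ in the perp map is harmless.
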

\begin{proof}
	In fact, we will prove this equality holds even before taking orthogonal projections. Let $\alpha_0,\alpha_1,\dots,\alpha_{n-1}$ be an integral basis of $\mc{O}$ and let $\wt{T}$ and $\wt{M}$ be the Gram matrices of the trace form and the Minkowski inner product, respectively, with respect to this basis. Let $v_i=j(\alpha_i)\in\CC^n$. We can think of the $v_i$ as column vectors and let $A$ be the $n\times n$-matrix
	\[
		A=\vect{v_0&v_1&\cdots&v_{n-1}}.
	\]
	Then,
	\begin{equation}\label{eqn:MandT}
		\wt{M}=\ol{A}^TA\quad\text{and}\quad\wt{T}=A^TA.
	\end{equation}
	
	First, note that the equality we wish to prove is independent of the choice of basis. Indeed, if $P\in\gl_n(\RR)$, $\wt{M}_1=P^T\wt{M}P$, $\wt{T}_1=P^T\wt{T}P$, and $\wt{M}_1\wt{T}_1^{-1}\wt{M}_1=\wt{T}_1$, then
	\[
		P^T\wt{T}P=\wt{T}_1=\wt{M}_1\wt{T}_1^{-1}\wt{M}_1=P^T\wt{M}\wt{T}^{-1}\wt{M}P
	\]
	so that $\wt{T}=\wt{M}\wt{T}^{-1}\wt{M}$ since $P$ is invertible.
	
	Second, note that what we just explained shows that the identity being true for $\wt{M}$ and $\wt{T}$ implies its truth for $M$ and $T$. Indeed, let us choose $\alpha_0=1$ (which is always possible), then the matrix bringing $v_0,v_1,\dots,v_{n-1}$ to $v_0,v_1^\perp,\dots,v_{n-1}^\perp$ is some $P\in\gl_n(\RR)$, and $P^T\wt{M}P=1\oplus M$ and $P^T\wt{T}P=1\oplus T$.

Now, we proceed as in \cite{Taussky}, including the details here for the convenience of the reader. The matrix $A$ has $r_1$ real rows and $r_2$ pairs of complex conjugate rows. There is therefore an explicit $R\in\gl_n(\RR)$ such that $RA$ has the same real rows and each pair of complex conjugate rows with entries $a_k+ib_k,a_k-ib_k$, respectively, becomes a pair of rows with entries $a_k,ib_k$, respectively. After possibly reordering the rows (which amounts to replacing $R$ with another matrix in $\gl_n(\RR)$), we may assume that the first $r_1+r_2$ rows of $RA$ are real and that the last $r_2$ are purely imaginary. Let $D$ be the diagonal matrix whose first $r_1+r_2$ entries are $1$ and whose remaining $r_2$ entries are $i$. Then, $DRA$ is some invertible real matrix whose inverse we will denote by $P$. Write
	\[
		R^{-T}R^{-1}=\vect{B_{11}&B_{12}\\B_{21}&B_{22}}
	\]
	where $B_{11}$ is an $(r_1+r_2)\times(r_1+r_2)$-matrix and $B_{22}$ is an $r_2\times r_2$-matrix.
	Then,
	\[
		D^{-T}R^{-T}R^{-1}D^{-1}=\vect{B_{11}&-iB_{12}\\-iB_{21}&-B_{22}}
	\]
	so that $B_{12}$ and $B_{21}$ are $0$, since $D^{-T}R^{-T}R^{-1}D^{-1}=P^TA^TAP$, which is a real matrix. Since $R^{-T}R^{-1}$ is positive definite, so are the $B_{kk}$. Let $\wt{T}_1=P^T\wt{T}P$ and $\wt{M}_1=P^T\wt{M}P$. Then, by \eqref{eqn:MandT} and what we have just showed,
	\begin{align*}
		\wt{M}_1\wt{T}_1^{-1}\wt{M}_1	&=\vect{B_{11}&\\&B_{22}}\cdot \vect{B_{11}^{-1}&\\&-B_{22}^{-1}}\cdot\vect{B_{11}&\\&B_{22}}\\
								&=\vect{B_{11}&\\&-B_{22}}\\
								&=\wt{T}_1,
	\end{align*}
	as desired.
\end{proof}

This leads to the following natural question: fix a primitive quadratic form $Q$ over $\ZZ$ in $n-1$ variables (that arises as $\TZP$ for some number field $K$) of signature $(r_1+r_2-1,r_2)$ (where $n=r_1+2r_2$), are the shapes of the degree $n$ number fields $K$ with signature $(r_1,r_2)$ whose $\TZP$ is (equivalent to) $Q$ equidistributed on $\mf{H}_Q$?

\subsection*{Acknowledgments}
The author would like to thank Manjul Bhargava, Asaf Hadari, Piper~H, and Akshay Venkatesh for some helpful conversations.

\bibliographystyle{amsalpha}

\bibliography{cubic}

\end{document}